\documentclass[a4paper,12pt]{amsart}
\usepackage[utf8]{inputenc}
\usepackage{amsthm}
\usepackage{amsmath}
\usepackage{bm}
\usepackage{enumitem}
\usepackage{amsfonts}
\usepackage{amssymb}
\usepackage{mathtools}
\usepackage{appendix}
\usepackage{mathrsfs}
\usepackage{setspace}
\usepackage{comment}
\usepackage{xcolor}
\usepackage{todonotes}
\usepackage{thmtools} 
\usepackage[foot]{amsaddr}
\RequirePackage[a4paper,top=2.54cm,bottom=2.54cm,left=1.90cm,right=1.90cm,%
                headsep=1em,includehead,includefoot]{geometry}
\usepackage[pdfdisplaydoctitle,colorlinks,breaklinks,urlcolor=blue,linkcolor=blue,citecolor=blue]{hyperref} 
\usepackage[nameinlink,capitalise]{cleveref}


\newcommand{\E}{\ensuremath{\mathbb{E}}}
\providecommand{\U}[1]{\protect\rule{.1in}{.1in}}
\newtheorem{theorem}{Theorem}

\newtheorem{definition}[theorem]{Definition}

\newtheorem{lemma}[theorem]{Lemma}

\newtheorem{proposition}[theorem]{Proposition}
\newtheorem{remark}[theorem]{Remark}

\newcommand{\T}{\mathbb{T}}
\newcommand{\R}{\mathbb{R}}
\newenvironment{acknowledgements}{%
  \begin{abstract}
}{%
  \end{abstract}
}

\newcommand{\Div}{\text{div}}
\title[Stretching   of   Polymers and turbulence: FP equation and scaling limits]
{Stretching   of   Polymers and turbulence: Fokker Planck equation, special stochastic scaling limit and stationary law}
\author[F. Flandoli]{Franco Flandoli}
\address{Scuola Normale Superiore, Piazza dei Cavalieri, 7, 56126 Pisa, Italia}
\email{\href{mailto:franco.flandoli at sns.it}{franco.flandoli at sns.it}}
\author[Y. Tahraoui]{Yassine Tahraoui}
\address{Scuola Normale Superiore, Piazza dei Cavalieri, 7, 56126 Pisa, Italia}
\email{\href{mailto:yassine.tahraoui at sns.it}{yassine.tahraoui at sns.it}}
\date\today
\keywords{Fokker-Planck    equation,    Turbulence,Transport Noise, Stretching Noise, Polymer flow}
\subjclass{35Q84, 60H15, 60H30, 76F25.}

\begin{document}
	\begin{abstract}
	    The aim of this work is understanding the stretching mechanism of stochastic models of turbulence acting on a simple model of dilute polymers. We consider a turbulent model that is white noise in time and activates frequencies in a shell $N\leq |k|\leq 2N$ and investigate the scaling limit as $N\rightarrow \infty$, under suitable intensity assumption, such that the stretching term has a finite limit covariance. The polymer density equation, initially an SPDE, converges weakly to a limit deterministic equation with a new term. Stationary solutions can be computed and show power law decay in the polymer length.
	\end{abstract}
	\maketitle
\tableofcontents
 	\section{Introduction}
Polymers are complex molecular systems, that can be vaguely thought as a chain
of springs. In a turbulent fluid they are usually found in two states called
\textit{coil} and \textit{stretched}. The coil state is like a spherical or
ellipsoidal rolled chain, which may be more or less elongated, but still in
roll position. The stretched state is when the chain is elongated, more
similar to a stright line than a sphere. Turbulence with small stretching
intensity produces only a small perturbation of the coil state, while strongly
stretching turbulence may lead to the stretched state; when the polymer pass
from one state to the other we speak of coil-stretch transition. \\

In this work, we consider in dimension $2$ or $3$ the following \textit{Hookean} model:
\begin{align}\label{Intro1}
	\begin{cases}
	dR_t&=\nabla u(X_t,t) R_tdt-\dfrac{1}{\beta}R_tdt+\sqrt{2}\sigma d\mathcal{W}_t, \\
	dX_t&=u(X_t,t)dt,
	\end{cases}
	\end{align}
where $X_t$ is the polymer position (the    center  of  mass) and $R_t$ is  the end-to-end vector, representing the orientation and elongation of the chain,  see e.g.  \cite[Section  4.2]{BriTL09}. The polymer is embedded into a fluid having velocity $u(t,x)$, which stretches $R_t$ by $\nabla u(x,t)$. The equation for $R_t$ contains also a damping (restoring) term with relaxation time $\beta$ and Brownian fluctuations 
$\sqrt{2}\sigma d\mathcal{W}_t$ where, to simplify the notations we have denoted by $\sigma^2$ the product $\frac{kT}{\beta}$, $k$ being Boltzmann constant and  $T$ being the temperature. \\

The statistics of the polymer length $r$, say the diameter, have been investigated by several authors in the physical literature, see for instance  \cite{Balk,FalGawVer,Gerashen,PicardoLanceVinc} and other references mentioned below. In the coil state,  the distribution of $r$ is found to be power law
\begin{equation}
f\left(  r\right)  \sim r^{-1-\alpha}\qquad\text{for relatively large
}r\label{power law}%
\end{equation}
with the exponent $\alpha$ positive (so that $f$ is normalizable). The
exponent $\alpha$ depends on the stretching properties of the turbulent flow:
the highest is the stretching intensity, the lowest is $\alpha$. At $\alpha=0$
one has the coil-stretch transition. In the stretched state, the precise mathematics depends on the idealizations of
the model.  If we had introduced a superlinear damping instead of the linear
damping $-\frac{1}{\beta}R_t$, this would produce a sort of cut-off at very high lengths ( e.g. \textit{FENE} model, see \autoref{rmq-FENE}), so that the behavior (\ref{power law}) would be true only
in a range
\[
r_{0}<<r<<r_{1}%
\]
and globally the function $f$ would still be a pdf. In our idealization of
linear damping, the stretching may overcome the damping and lead to infinite
length in the asymptotic regime, which is the idealized signature of strech
state, see \autoref{Section-stationry-power-law}.\\

Clearly, one would like to predict the exponent $\alpha$ based on turbulence
features. The theory developed on physical grounds by \cite{Balk}
and  \cite{Chertkov} tells us that $\alpha$ is related to the Lyapunov
exponents of the turbulent flow. Precisely, if $\phi_{t}\left(  x\right)  $ is
the Lagrangian flow associated to the turbulent fluid, and we define%
\[
\mathcal{L}\left(  q\right)  =\lim_{t\rightarrow\infty}\frac{1}{t}%
\log\mathbb{E}\left[  \left\vert D\phi_{t}\left(  x\right)  \right\vert
^{q}\right]
\]
then $\alpha$ satisfies 
$
\frac{\alpha}{2\beta}=\mathcal{L}\left(  \alpha\right).
$
However, the computation of $\mathcal{L}\left(  \alpha\right)  $ is not
trivial in general. \\

The theory just mentioned does not make use of scale separation (and not even delta correlation in time of the turbulent fluid), hence it is quite general (much more than ours). In our paper we consider a special class of turbulent flows, delta-correlated in time, having a precise dyadic scale $\ell$, namely
based on Fourier components of modes $\left\vert k\right\vert \in\left[
\ell^{-1},2\ell^{-1}\right]  $. When $\ell$ is very small, precisely in the
scaling limit as $\ell\rightarrow0$, we discover a power law of the form
(\ref{power law}). This provides another proof (or a more rigorous proof,
although in a particular regime) of the emergence of power law in polymer
length $r$. In addition, we find a very direct relation between $\alpha$ and
the turbulent flow characteristics, thanks essentially to the special
structure of the flow and the separation of scales intrinsic in the scaling
limit; the non-trivial informations on the Lyapunov exponents are not needed anymore.\\

For the convenience of the reader, let us explain heuristically the main result of this work then precise results  (see \autoref{section-main-result} ) and  rigorous proofs   are provided in the main body of the paper.

\subsection{Sketch of the  main results}
We consider a dilute (non-interacting) family of polymers subject to equations
\eqref{Intro1}, thus described by the kinetic equation for the density $f^{N}\left(
x,r,t\right)  $ of polymers with position $x$ and length $r$ at time $t$%
\begin{align}\label{Intro2}
	\begin{cases}\partial_t f^N(x,r,t)&+\Div_x(u^N(x,t)f^N(x,r,t))+\Div_r((\nabla u^N(x,t)r-\dfrac{1}{\beta}r)f^N(x,r,t))=	\sigma^2\Delta_r f^N(x,r,t)\\	
		f^N|_{t=0}&=f_0.
			\end{cases}\end{align}

Here $u^{N}\left(x,t\right)  $ is the fluid velocity. We assume that
$u^{N}\left(  x,t\right)  $ is made of two components, a deterministic
large-scale one $u_{L}\left(  x,t\right)  $ and a stochastic one, modeling
small-scale turbulence, of the form $\sum_{k\in K}\sigma_{k}^{N}\left(
x\right)  \partial_{t}W_{t}^{k}$, acting in Stratonovich form, that will be
described below in detail; hence%
\[
u^{N}\left(  x,t\right)  =u_{L}\left(  x,t\right)  +\circ\sum_{k\in K}%
\sigma_{k}^{N}\left(  x\right)  \partial_{t}W_{t}^{k}.
\]
Here we just stress the fact that the coefficients $\sigma_{k}^{N}\left(
x\right)  $ of the turbulent part depend on a parameter $N$ so that, when $N$
increases, they represent smaller and smaller space scales, precisely Fourier
frequencies $N\leq\left\vert k\right\vert \leq2N$, providing a separation of
scale regime essential for our analysis. The noise acts on $f^{N}\left(
x,r,t\right)  $ in transport form, but incorporating also the stretching
action by the term $\nabla u^{N}\left(  x,t\right)  r$.\\

 Our aim in this work is twofold. 
   Firstly,  present a rigorous mathematical framework and proofs to study a stochastic Fokker-Planck equations with transport noise and general class of initial data. These equations  has an hyperbolic nature  with    respect to  the space variable  $x\in\T^2$  and  polymer length vector variable $r\in\mathbb{R}^2$. Thus, we should  combine techniques for weighted spaces to handle the $r$-variable, spatial homogeneity and mirror symmetry of the covariance operator to handle the stochastic part  and   prove existence of ”quasi-regular weak solution”, see \autoref{Def-sol}. Then, combine commutators techniques to handle the space variable, using a density result based on Wiener chaos decomposition to prove uniqueness in this class of solution, see \autoref{TH2}. We refer to  \autoref{section-outlines}  for discussion about the mathematical challenges related to these equations. \\
   Secondly, understand the stretching power on the polymer, in the limit as $N\rightarrow \infty$  when the noise  activates frequencies in a shell $N\leq |k|\leq 2N$.  The final result  is a limit model, of Fokker-Planck type, with a new diffusion term in the radial variable of the polymer, with non-homogeneous and degenerate coefficients. Its radially symmetric stationary solutions are explicit and have power-law tails.  Let us describe quickly the scaling limit result, see \autoref{main-thm-2} for the  precise result.
   Under the assumptions described in \autoref{Section1-desciption}, we prove that $f^N(x,r,t)$ weakly converges to the solution of a deterministic equation of the form (the results below contain also a deterministic term in the velocity, which is omitted here in the Introduction for notational simplicity)
	\begin{align}\label{Intro3}
	\begin{cases}\partial_t \overline{f}(x,r,t)&-
 \Div_r(\dfrac{1}{\beta}r \overline{f}(x,r,t))=	\sigma^2\Delta_r \overline{f}(x,r,t) + 
 \dfrac{1}{2}\Div_r(A(r) \nabla \overline{f}(x,r,t))        \\	
		\overline{f}|_{t=0}&=f_0
			\end{cases}\end{align}
   where $A(r)=k_T (3\left\vert r\right\vert^2I-2r\otimes r)$ and
$k_T=\dfrac{\pi \log(2)}{8}a^2$ where $a$ is an intensity parameter of the noise. The new diffusion term in the $r$-variable is one of the most important novelty of this work. It captures the statistical properties of the stretching mechanism. We compute the explicit rotation-invariant solution of the associated stationary equation and find it has a power law decay for large 
   $|r|$:
   \[
\overline{f}\left(  \vert r\vert \right)  \sim\left\vert r\right\vert ^{-\frac{2}{k_{T}\beta}}%
\]
   indicating large values with high probability. The constants in the power are directly associated to those of the stochastic model of $u^N.$ This fact was predicted in the physical literature  based on other models and assumptions, see e.g. \cite{Balk}. In our model we identify a simple link between the power of the tail and parameters of the turbulence model, see \autoref{Section-stationry-power-law}.\\
   
    We wish to draw the reader's  attention   to  the following:  introduce the   mean    of  the structure   tensor  $\mathbf{T}(t,x):=\int_{\mathbb{R}^2}r\otimes rf^N(t,x,r)dr$    then $\mathbf{T}^N$\footnote{The    subscript  $N$ to  stress  the dependence  on  $N$ because of  the presence    of  $u^N$.}   satisfies   the following  closed  system  of  PDEs  
\begin{align}\label{macro-intro}
\begin{cases}
&\partial_{t}\mathbf{T}^N+u^N\cdot\nabla \mathbf{T}^N   =(\nabla u^N) \mathbf{T}^N+\mathbf{T}^N(\nabla u^N)^t -\dfrac{2}{\beta}(\mathbf{T}^N-kT\mathbf{I}) \\
&\quad \mathbf{T}^N|_{t=0} =\mathbf{T}_0,
\end{cases}
\end{align}
The last    equation   \eqref{macro-intro}  is    a    macroscopic  Oldroyd-B   model (see e.g. \cite[Section 2.8]{cioranescu2016})   and the tensor  $\mathbf{T}$    characterize the viscoelastic (non-Newtonian   part)   of  the flow   \eqref{Intro1}. Many    rheological behavior    can be  detected    such    as  shear   viscosity,  normal  stress  difference  and overshoot   phenomenon  in  contrast    with    Newtonian   flows,  see e.g.    \cite{BriTL09}  and we  refer   e.g.    to \cite{cioranescu2016,tahraoui2023,tahraoui2023local}     for other   types   of  non-Newtonian   flows.   As  we    discuss in  \autoref{Section-stationry-power-law},   the explicit rotation-invariant solution of the  stationary equation    associated  with  the limit   equation     has a power-law density  and  therefore  it       is  not sufficient  to  study   the system    \eqref{macro-intro}  only,   this    is  another reason  we  base    our analysis    on  FP  equation  \eqref{Intro2}. We  will    comment about  the limit   equation    associated with    \eqref{macro-intro}  in  \autoref{Rmq-SDE-FP}. Finally, Although the  results and proofs are presented in 2D, similar results  hold  in 3D after some cosmetic changes and mostly  the form of the stochastic turbulent velocity, see \autoref{remark-3D-case}.\\

   This work is a key step in a research project on the effects of small-scale turbulence on large-scale motion, started a few years ago after the development of a new technique. In order to understand the novelties it may be useful to review these recent developments and stress similarities and
differences.
\subsection{Literature review}
 We divide the presentation in two subsections, the scalar and the
vector case. The second one is characterized by a peculiar issue, the presence
of stretching, which is by no means an incremental detail over the scalar
case; and the present work aims to give relevant information on the stretching case.

\subsubsection{The scalar case}

The foundational work \cite{Galeati} investigated a problem that, to some extent,
could be considered as a variant of diffusion approximation results in
stochastic homogenization theory: a stochastic first order differential
operator in Stratonovich form gives rise, in a suitable scaling limit, to a
deterministic diffusion term. The stochastic equation had the form (we do not
describe in detail the spatial domain, the finite or countable set where the
index $k$ varies and other details which have no relevance for the purpose of
this section, see \cite{Galeati})%
\begin{align*}\begin{cases}
d\rho_{t}^{N} &  =\sum_{k}\theta_{k}^{N}e_{k}\cdot\nabla\rho_{t}^{N}\circ
dW_{t}^{k}\\
\rho^{N}|_{t=0} &  =\rho_{0}\in L^{2}%
\end{cases}
\end{align*}
with suitable vector fields $e_{k}$ (a suitable subset of a complete
orthonormal system in $L^{2}$) and the family of real valued coefficients
$\theta^{N}:=\left(  \theta_{k}^{N}\right)  $. Under the assumptions
$$\lim_{N\rightarrow\infty}\left\Vert \theta^{N}\right\Vert _{\ell^{2}}%
^{2}=\kappa, \quad \lim_{N\rightarrow\infty}\left\Vert \theta^{N}\right\Vert
_{\ell^{\infty}}=0,$$ it has been proved that, in a weak sense (namely against
test functions or in suitable negative order Sobolev spaces) the solution
$\rho^{N}$ converges in mean square to the unique deterministic solution
$\rho_{t}$ of the heat equation%
\begin{align*}
\begin{cases}
\partial_{t}\rho_{t} &  =\kappa\Delta\rho_{t}\\
\rho|_{t=0} &  =\rho_{0}%
\end{cases}
\end{align*}
(possibly with the constant $\kappa$ modified by a constant related to space
dimension and the choice of the orthonormal system). The technical assumption
mentioned above on $\theta^{N}$ can be reformulated in terms of the noise
covariance function:%
\[
Q_{N}\left(  x,y\right)  =\sum_{k}\left(  \theta_{k}^{N}\right)  ^{2}%
e_{k}\left(  x\right)  \otimes e_{k}\left(  y\right)  =\mathbb{E}\left[
W\left(  x,1\right)  \otimes W\left(  y,1\right)  \right]
\]
where $W\left(  x,t\right)  =\sum_{k}\theta_{k}^{N}e_{k}\left(  x\right)
W_{t}^{k}$ and the associated linear operator on $L^{2}$ vector fields
$v\left(  x\right)  $%
\[
\left(  \mathbb{Q}_{N}v\right)  \left(  x\right)  =\int Q_{N}\left(
x,y\right)  v\left(  y\right)  dy.
\]
The assumption is that the trace of $\mathbb{Q}_{N}$ converges to $\kappa$ and
the operator norm to zero. Heuristically, it means that the function
$Q_{N}\left(  x,y\right)  $ tends to a non zero value along the diagonal and
to zero outside, in other words
\begin{align}
\begin{cases}
\lim_{N\rightarrow0}Q_{N}\left(  x,x\right)   &  =\kappa
\label{classical scaling}\\
\lim_{N\rightarrow0}Q_{N}\left(  x,y\right)   &  =0\text{ for }x\neq
y
\end{cases}
\end{align}
namely the variance of the noise remains constant at the limit but the space
correlation goes to zero.\\

We have said that this is related to results in homogenization theory \cite{MK}, but it is important to notice that the model and strategy are
different from classical homogenization and especially they are very efficient for Generalizations to nonlinear problems. The result of \cite{Galeati} has been
extended, indeed, to 2D Euler equations, 2D Navier-Stokes equations, and other nonlinear models like Keller-Siegel and nonlinear heat equations, see
\cite{FGL1} and \cite{FGL2} for the basic results in this direction, obtained
both with compactness methods and more quantitatively with estimates on stochastic convolutions. The results have been supplemented by the analysis of Gaussian fluctuations and Large Deviations \cite{GL}. Mixing and dissipation enhancement results can also be obtained by this approach, see \cite{A-mixing,DongLuoTang,FL2,FGLtrans}, \cite[section 9]{FGL2} and  
\cite{Luo,LuoBin}. The fact that the limit equation
contains an additional strongly elliptic term sometimes has a feedback effect
of regularization on the approximating stochastic problems (the closeness in
some norm provides additional a priori bounds), leading to delay of blow-up results \cite{FGLreg}. See also \cite{A,FHuang,FlaElibook,FlaMorlacchiPapini,FlaRusso,HuangWei,LuoBousinn,LuoTang,LuoWang,Papini} for various other applications, examples and  numerical approximations. The problem is of interest also for Large Eddy Simulations and
Boussinesq assumption, see \cite{FLuoLuongo} for a Smagorinsky type result. The scaling limit has been also transposed to particle systems, see
for instance \cite{FlaLuoparticelle,GuoLuo}. This list of works is certainly non complete but it gives the feeling of the fertility of the scaling limit result proved in \cite{Galeati}.\\

Let us also mention that transport and transport-stretching noise (the second being the topic of next subsection) is currently used in many other researches
and for different purposes; above we have restricted the attention to works
dealing with the special scaling limit to an additional diffusion. The list of
references on other research directions on transport-stretching noise is too
long, let us only mention as examples \cite{Crisan1,Goodair24,Ephrati,H,Harouna,Memin}.

\subsubsection{The  vector fields case}

The works discussed above dealt with scalar problems, where transport noise
induces an additional elliptic operator. When the turbulent fluid acts on
vector fields, the action has two aspects: not only transport but also
\textit{stretching}: namely the differential $D\phi_{t}$ of the Lagrangian
flow $\phi_{t}$ of a fluid produces a modification of the length of vectors
$v$, $v\mapsto$ $D\phi_{t}v$. Since we consider divergence free fluids, the
deformation tensor $Du$ of the velocity field $u$ has zero trace, the symmetrization typically has a positive eigenvalues, indicating that an increase of length should be expected for many directions $v$.\\

Understanding deterministic and random stretching is much more difficult and
open, with respect to transport. Two indications of this difficulty are that
stretching is considered the most important problem in view of the open
problem of blow-up for the 3D Navier-Stokes equations \cite{Fef}; and that in
contrast to a wide range of results on mixing and dissipation enhancement
produced by chaotic transport of scalars, there is very few on the effect of
complex stretching \cite{CZ}.
The problem of random stretching has been approached also by the scaling limit
described above, and the research of the present paper is along this line. Let us see what previous papers did on this topic.\\

The results are more fragmentary than the scalar case. One of the first works on the action of vector fields has been done in \cite{FL1} on the 3D Navier-Stokes equations in vorticity form, for the vorticity $\omega=\nabla u$, $u$ being
the velocity of the fluid. However, precisely because the effect of the random
stretching term $\left(  \omega\cdot\nabla e_{k}\right)  $, more precisely
\[
\sum_{k}\theta_{k}^{N}\left(  \omega\cdot\nabla e_{k}\right)  \circ dW_{t}^{k}%
\]
was (and it  stills until now) not well understood, this term was neglected and
only the transport%
\[
\sum_{k}\theta_{k}^{N}\left(  e_{k}\cdot\nabla\omega\right)  \circ dW_{t}^{k}%
\]
was considered (with the need of a projection operator, see \cite{FL1} for
motivation and details). The result is similar to the scalar case
\cite{FGLreg}, namely a delay of blow-up, relevant because of the relevance of
the equation, but still incomplete because it does not explain what would
happen in the presence of random stretching. See also a generalisation to
rough path noise \cite{FHLN} and to magnetic hydrodynamics \cite{Luo}.\\

Concerning the 3D Navier-Stokes equations, a very important progress appeared
recently \cite{A2}. It is concerned with the Navier-Stokes equations in
velocity form and the noise, of transport type, is at that level
(energy-preserving instead of circulation-preserving as it should be the
transport-stretching noise at the level of vorticity equation, see the
foundational work \cite{H}). The stretching action of such noise is a bit
hidden but the mathematical progress is very important since it shows that a
reasonable noise (with some form of stretching) may still regularize the 3D
Navier-Stokes type equations (the result needs an hyperviscosity but of lower
degree than the one of the deterministic theory).\\

Then three works appeared with explicit inclusion of a random stretching term,
all of them basically assuming that the noise satisfies the classical scaling
(\ref{classical scaling}) of the scalar case. The first one, \cite{FlandoliLuo2024}
deals with a Navier-Stokes type system called 2D-3C, having 3D features but
also 2D simplifications, which allows one to control the strength of the
stretching term; the physics is motivated by a limit of fast rotating systems.
The second one is \cite{ButFlaLuo} on a passive magnetic field in a 3D thin domains,
where the smallness of one of the three dimensions is a key ingredient for the
control of stretching; the mathematics is quite intricate and thus the result
is developed for the simplified case of a passive vector field. The third one
is \cite{BL2024}, again on the passive magnetic field, but in a full three
dimensional domain. Stretching is full, in a sense, here but the key
ingredient is controlling the solution in a negative Sobolev space, physically
speaking at the level of the magnetic potential. As already said, but crucial
for the comparison with the present paper, all these works try to assume that
the noise scales, in $N$, as in (\ref{classical scaling}), which in particular
it means that we expect a Laplacian coming for the transport, in the limit
equation. Heuristically, when the Laplacian-due-to-transport persists,
stretching could be infinite (at least a priori). The ways to overcome this
blow-up of stretching, in the above papers, has been either to constrain it by a 2D-3C structure, or by a smallness parameter related to thin nomains, or controlling the solution in a negative Sobolev space.\\

Let us come to the different scaling in $N$ used in the present paper, which
also appears in \cite{BFLT2024} for very different purposes (a precise comparison
is made below). Here we have stretching of a passive vector quantity $R_{t}$,
that in Lagrangian form is  written as
\[
\sum_{k}\theta_{k}^{N}\nabla e_{k}\left(  X_{t}^{N}\right)  R_{t}^{N}\circ
dW_{t}^{k}%
\]
($X_{t}$ being the position of the polymer), and in Eulerian form as%
\[
\sum_{k}\theta_{k}^{N}\left(  \nabla e_{k}\left(  x\right)  r\right)
\cdot\nabla f^{N}\left(  x,t\right)  \circ dW_{t}^{k}%
\]
($\nabla e_{k}$ is a matrix acting on the vectors $R_{t}^{N}$ and $r$
respectively). Then we are faced also with the covariance matrix%
\[
C_{N,r}\left(  x,y\right)  =\sum_{k}\left(  \theta_{k}^{N}\right)  ^{2}\left(
\nabla e_{k}\left(  x\right)  r\right)  \otimes\left(  \nabla e_{k}\left(
y\right)  r\right)  .
\]
Clearly, since (roughly speaking)\ $\left\vert \nabla e_{k}\left(  x\right)
\right\vert \sim\left\vert k\right\vert $, we cannot hope that the classical
scaling (\ref{classical scaling}) of the scalar case holds and simultaneously
also the covariance matrix $C_{N,r}\left(  x,y\right)  $ has a finite limit in
$N$. For this reason we introduce a different scaling w.r.t.
(\ref{classical scaling}), where $Q_{N}\left(  x,y\right)  $ goes to zero
outside and also on the diagonal with the right speed but $C_{N,r}\left(
x,y\right)  $ satisfies, very heuristically,
\begin{align*}
\lim_{N\rightarrow0}C_{N,r}\left(  x,x\right)   &  =A\left(  r\right)  \\
\lim_{N\rightarrow0}C_{N,r}\left(  x,y\right)   &  =0\text{ for }x\neq y
\end{align*}
where $A\left(  r\right)  $ is the matrix function described in this paper
(equal to a multiple of $3\left\vert r\right\vert ^{2}I-2r\otimes r$). The
consequence of this scaling is that no diffusion in the $x$-variable appears,
opposite to the scalar case; but a diffusion in $r$ appears, with diffusion
matrix $A\left(  r\right)  $. In other words, the effect of transport becomes
irrelevant in the scaling limit, while the effect of stretching has a limit
which is a diffusion in the length variable $r$, with a special diffusion matrix.\\

Finally, let us compare the present work with \cite{BFLT2024}. The work is devoted
to a stochastic PDE for a passive magnetic field, subject to a model of
turbulence fluid. In \cite{BFLT2024} it is shown that it is possible to
investigate a Vlasov-type equation, suitably associated to the equation of the
magnetic field, prove a diffusion limit result for it and deduce informations
on the increase of magnetic field, in the scaling regime that controls the
derivatives of the stretching used also here. The two works stem from a
similar physical intuition, but the two problems are very different, the
present one starting from a Lagrangian description of a polymer, while
\cite{BFLT2024} from the Eulerian fluid dynamic description of a magnetic field.
Moreover, the mathematical techniques to deal with them are completely
different: in \cite{BFLT2024} the Vlasov-type equation is investigated in the
space of measures, in the spirit of Young measure solutions, and the technique
to prove well posedness and scaling limit is ultimately based on the
properties of the solutions of the original Eulerian SPDE. On the contrary, in
the present paper we analyze the Fokker-Planck equation directly in a space of
functions, with non trivial technical details related to weighted spaces (not
used before in the above mentioned literature on these scaling limit problems).\\

\subsubsection*{Structure of the paper}
 The manuscript is   organized   as   follows: in \autoref{Section1-desciption}, we  present the functional  and  stochastic  settings.   Then we introduce  in \autoref{Corrector section} the stochastic  FP  in  It\^o   form    after   presenting  some    properties  of  the covariance  operator.         \autoref{section-main-result}    collects    the main    results of  our work,   the computation of   the explicit rotation-invariant solution of the associated stationary equation of  \eqref{equ-final}. In \autoref{section-outlines}, we highlight the ideas of the proof and some formal calculations as well as the technical challenges.  \autoref{Section-exitence}   is  devoted to  the proof   of  the existence   and uniqueness  of  \textit{quasi-regular}    solution    to the  stochastic  FP \eqref{Ito-FP}.  In  \autoref{Section-diffusion},  we  prove   the convergence of the  stochastic  FP \eqref{Ito-FP}   to  the limit   PDE \eqref{equ-final}.       
 
 \section{Notations, definitions and problem formulation}\label{Section1-desciption}

 \subsubsection{Notations    and functional  setting  }
We  will    consider the periodic boundary conditions with respect to  the spacial variable    $x$,    namely  $x$ belongs to  the  2-dimensional torus $\T^2= (\mathbb{R}/2\pi\mathbb{Z})^2.$  On  the other   hand,   the end-to-end  vector  variable  $r$   belongs to  $\R^2.$   Let	$m\in	\mathbb{N}^*$  and  introduce the following Lebesgue and Sobolev spaces with polynomial weight, namely
\begin{align*}
&L^2_{r,m}(\mathbb{T}^2\times\mathbb{R}^2):=\{f:\mathbb{T}^2\times\mathbb{R}^2\to \mathbb{R}:  \int_{\mathbb{T}^2\times\mathbb{R}^2}\vert f(x,r)\vert^2(1+\vert r \vert^2)^{m/2}dxdr:= \Vert f\Vert_{L^2_{r,m}}^2 < \infty \},\\
&H^l_{r,m}(\mathbb{T}^2\times\mathbb{R}^2):=\{ f:\mathbb{T}^2\times\mathbb{R}^2\to \mathbb{R}: \hspace{-0.2cm}\sum_{\vert\gamma\vert+\vert\beta\vert \leq l }\int_{\mathbb{T}^2\times\mathbb{R}^2}\vert\partial_x^\gamma\partial_r^\beta f(x,r)\vert^2(1+\vert r \vert^2)^{m/2}dxdr= \Vert f\Vert_{H^l_{r,m}}^2 < \infty \},
\end{align*}
where	$l\in	\mathbb{N}^*.$	Now,	let	us	precise	the	functional	setting	to	study	\eqref{Ito-FP}.	We	will	use	the	following	notations	$$H^2_{r,4}(\mathbb{T}^2\times\mathbb{R}^2):=V,\quad	L^2_{r,2}(\mathbb{T}^2\times\mathbb{R}^2):=H.$$	

We	recall	the definition of	 inner products	defined	on	the	spaces	$V$	and	$H$.
\begin{align*}
(h,g)_V&:= \sum_{\vert\gamma\vert+\vert\beta\vert \leq 2 }\int_{\mathbb{T}^2\times\mathbb{R}^2}\partial_x^\gamma\partial_r^\beta h(x,r)\partial_x^\gamma\partial_r^\beta g(x,r)(1+\vert r \vert^2)^2dxdr,\quad	\forall	g,h \in V;\\
(h,g)_H&:= \int_{\mathbb{T}^2\times\mathbb{R}^2} h(x,r) g(x,r)(1+\vert r \vert^2)dxdr,\quad	\forall	g,h \in H;\\
(h,g)&:= \int_{\mathbb{T}^2\times\mathbb{R}^2} h(x,r) g(x,r)dxdr,\quad	\forall	g,h \in L^2(\mathbb{T}^2\times\mathbb{R}^2).
\end{align*}

  Since	$L^{\infty}(0,T;H)$	is	not	separable,	
it is	convenient	to	introduce	the	following	space:
$$L^2_{w-*}(\Omega;L^\infty(0,T;H))=\{ 	u:\Omega\to	L^\infty(0,T;H)	\text{	is	 weakly-* measurable		and	}	\E\Vert	u\Vert_{L^\infty(0,T;H)}^2<\infty\},$$
where	weakly-* measurable	stands	for	the	measurability	when	$L^\infty(0,T;H)$	is	endowed	with	the	$\sigma$-algebra	generated	by	the	Borel	sets	of	weak-*	topology, we recall that (see 	\cite[Thm. 8.20.3]{Edwards}) $$ L^2_{w-*}(\Omega ;L^\infty([0, T];H))\simeq \left(L^2(\Omega ;L^1([0, T];H^\prime)) \right)^\prime ,$$	

For $y=(y_1,y_2)\in   \R^2,$  $y^\perp$   stands  for $(-y_2,y_1).$ We  recall the following   notations:
\begin{itemize}
    \item   $(\nabla_x  g)_{i=1,2}=(\dfrac{\partial g}{\partial  x_i})_{i=1,2}; (\nabla_r  g)_{i=1,2}=(\dfrac{\partial g}{\partial  r_i})_{i=1,2} $ for scalar   function    $g$.
    \item   $(\nabla_x  g)_{i,j=1,2}=(\dfrac{\partial g^i}{\partial  x_j})_{i,j=1,2}; (\nabla_r  g)_{i,j=1,2}=(\dfrac{\partial g^i}{\partial  r_j})_{i,j=1,2} $ for vector  valued   function   $g$.
  \item     $\Delta_xg=\displaystyle\sum_{i=1}^2\dfrac{\partial^2 g}{\partial^2  x_i};  \Delta_rg=\displaystyle\sum_{i=1}^2\dfrac{\partial^2 g}{\partial^2  r_i}$  for scalar   function    $g$.   
\end{itemize}
and     $\Div_{x/r}   g=\nabla_{x/r}    \cdot   g$  for vector  valued   function   $g$.
We  don't   stress  the subscript  in $\nabla_x$  when    it  is  clear   from    the context.\\

In  order   to  prove   a    uniqueness  results, we  will  need  some    regularization  kernel. More  precisely,     let $\delta>0$    and      $\rho$ be   a smooth density of a probability measure on
$\R^2$, compactly supported in $B(0, 1)$    and define  the approximation of identity for the convolution on $\R^2$ as  $\rho_\delta(y)=\dfrac{1}{\delta^2}\rho(\dfrac{y}{\delta})$  (We also assume that $\rho$ is radially symmetric).     Since   we  are working on  $\T^2\times \R^2,$  we  recall  that    for any  integrable function $g$    on $\T^2$, $g$ can be extended
periodically to a locally integrable function on the whole $\R^2$   and
convolution $\rho_\delta * g$ is    meaningful   and  $\rho_\delta * g$ is still a $C^\infty$-periodic function. \\

Finally,
throughout the article,  we denote by $C,C_i, i\in \mathbb{N}$,   generic constants, which may vary from line to line.
	\subsubsection{Assumptions on the noise}\label{assumption_noise-2D}
Consider $\mathbb{Z}_{0}^{2}:=\mathbb{Z}^2-\{(0,0)\}$ divided into its four quadrants (write
$k=\left(  k_{1},k_{2}\right)  $)%
\begin{align*}
K_{++}  & =\left\{  k\in\mathbb{Z}_{0}^{2}:k_{1}\geq0,k_{2}>0\right\};\quad
K_{-+}   =\left\{  k\in\mathbb{Z}_{0}^{2}:k_{1}<0,k_{2}\geq0\right\}  \\
K_{--}  & =\left\{  k\in\mathbb{Z}_{0}^{2}:k_{1}\leq0,k_{2}<0\right\} ; \quad
K_{+-}   =\left\{  k\in\mathbb{Z}_{0}^{2}:k_{1}>0,k_{2}\leq0\right\}
\end{align*}
and set%
\begin{align*}
K_{+}  & =K_{++}\cup K_{+-};  \quad 
K_{-}   =K_{-+}\cup K_{--}  \text{  and }   K=K_{+}\cup K_{-}.
\end{align*}
Let $(\Omega,\mathcal{F},(\mathcal{F}_t)_t,P)$  be  a   complete   filtered probability space.  
    Define
\begin{align*}
\sigma_{k}^N\left(  x\right)     =\theta_{k}^N\frac{k^{\perp}}{\left\vert
k\right\vert }\cos k\cdot x,\qquad k\in K_{+},  \quad 
\sigma_{k}^N\left(  x\right)     =\theta_{k}^N\frac{k^{\perp}}{\left\vert
k\right\vert }\sin k\cdot x,\qquad k\in K_{-}%
\end{align*}
where
\begin{align*}
\theta^N_{k}&=\dfrac{a}{\left\vert k\right\vert^2 },  \qquad   N \leq \left\vert k\right\vert \leq  2N,\qquad N\in \mathbb{N}^* ; \quad
\theta^N_{k}=0 \qquad \text{elsewhere.}
\end{align*}
  This is the main assumption about the shell structure of the noise; and $a$ is a positive constant measuring the intensity.
Since $\theta^N_{k}$ depends only on $\left\vert k\right\vert,$  sometimes we write
$\theta_{\left\vert k\right\vert }$.
Notice that%
\begin{align*}
\partial_{i}\sigma_{k}^{\alpha}\left(  x\right)    & =-\theta_{k}\frac
{k_{i}\left(  k^{\perp}\right)  _{\alpha}}{\left\vert k\right\vert }\sin
k\cdot x,\qquad k\in K_{+},\quad    \alpha,i=1,2,\\
\partial_{i}\sigma_{k}^{\alpha}\left(  x\right)    & =\theta_{k}\frac
{k_{i}\left(  k^{\perp}\right)  _{\alpha}}{\left\vert k\right\vert }\cos
k\cdot x,\qquad k\in K_{-}.%
\end{align*}

Let us also consider a family $(W_t^k)_t^{k\in    \mathbb{Z}^2_0}$ of independent Brownian motions on the probability space $(\Omega,\mathcal{F},P)$. On the same probability space we shall soon assume that there exists another independent 2-dimensional Brownian motion $(\mathcal{W}_t)_t$.

\subsubsection{Dense    subsets  in  the space   $L^2(\Omega)$} The purpose of this part is to recall a density result, which will play a crucial role in \autoref{Subsection-uniq} and \autoref{Subsection-uniq-quasi-weak}. Namely,
we    will    prove   that    uniqueness  holds   a  particular  class  of  solution,    see \autoref{Def-sol}. It is sometimes called Wiener uniqueness and was used for instance by \cite{LeJanRaimond,Maurelli,NevesOlivera,FedrizziNevesOlivera,FlaOli2018}.
For that, let  $\mathcal{G}_t$ be  the filtration associated   with     $(W_t^k)_t^{k\in    \mathbb{Z}^2_0}$   namely $
      \mathcal{G}_t=\sigma\{W_s^k;  s\in[0,t],  k\in    \mathbb{Z}^2_0\},$
  and   denote  by  $\overline{\mathcal{G}}_t$    its completed   filtration\footnote{We assume that $\mathcal{F}_0$ contains all the $P$-null subset of    $\Omega$.}. For $T>0,$  let us  introduce
  \begin{align*}
      \mathcal{H}&=L^2(\Omega,\overline{\mathcal{G}}_T,P),   \quad     M_n=\{k\in  \mathbb{Z}^2_0;\quad  \vert   k\vert  \leq    n\}\\
      G&=\displaystyle\bigcup_{n\in\mathbb{N}}G_n;\quad  G_n=\{g=(g_k)_{k\in M_n};    g_k\in  L^2(0,T);\quad    \forall k\in    M_n\}.  \end{align*}
  For   $n\in   \mathbb{N}, g\in    G_n$,   we  set
  \begin{align*}
      e_g(t)&=\exp{\big(\sum_{k\in M_n}\int_0^tg_k(s)dW^k(s)-\dfrac{1}{2}\sum_{k\in    M_n}\int_0^t\vert   g_k(s)\vert^2   ds\big)}, \text{  for }   t\in[0,T];\\
         \mathcal{D}&=\{e_g(T);    \quad   g\in    G\}.
  \end{align*}
  From  It\^o   formula,    we  get
  $
      de_g(t)=\displaystyle\sum_{k\in M_n}g_k(t)e_g(t)dW^k(t).
  $ 
 Based  on  the Wiener chaos decomposition, we    recall    the following   result, see \cite[Ch.   1]{Nualart2006}.
      \begin{lemma}
          $\mathcal{D}$ is  dense   in  $\mathcal{H}.$
      \end{lemma}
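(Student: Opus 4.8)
The statement to prove is that $\mathcal{D}$ is dense in $\mathcal{H} = L^2(\Omega, \overline{\mathcal{G}}_T, P)$, where $\mathcal{D}$ consists of stochastic exponentials $e_g(T)$ built from the Brownian motions $(W^k)_{k \in \mathbb{Z}_0^2}$.

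The plan is to reduce the density question to a Wiener chaos argument, exactly as in the classical reference \cite[Ch.~1]{Nualart2006}. First I would reorganize the countable family of independent Brownian motions $(W^k)_{k \in \mathbb{Z}_0^2}$ into a single Gaussian framework: the collection of all stochastic integrals $\{\int_0^T h_k(s)\,dW^k(s) : h_k \in L^2(0,T),\ k \in \mathbb{Z}_0^2\}$ generates an isonormal Gaussian process on the Hilbert space $\mathfrak{H} = \bigoplus_{k \in \mathbb{Z}_0^2} L^2(0,T)$, and $\overline{\mathcal{G}}_T$ is (up to null sets) the $\sigma$-algebra generated by this process. The Wiener chaos decomposition then gives the orthogonal sum $\mathcal{H} = \bigoplus_{n \geq 0} \mathcal{H}_n$ into chaoses.

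\textit{Density step.} To show $\overline{\mathrm{span}}\,\mathcal{D} = \mathcal{H}$, I would argue by orthogonality: suppose $F \in \mathcal{H}$ satisfies $\E[F\, e_g(T)] = 0$ for every $g \in G$. The key observation is that $e_g(T)$ is, up to the deterministic normalizing constant $\exp(-\tfrac12 \sum_k \int_0^T |g_k|^2 ds)$, equal to $\exp(\sum_{k \in M_n} \int_0^T g_k\,dW^k)$, so the vanishing of $\E[F\,e_g(T)]$ for all $g$ is equivalent to $\E[F \exp(\sum_k \int_0^T g_k\,dW^k)] = 0$. Because this holds for the finite-dimensional families $g \in G_n$ for every $n$, and $G = \bigcup_n G_n$ exhausts all finite-support square-integrable coefficient sequences, one concludes that the Laplace/Fourier-type functional of $F$ against arbitrary elements of a dense subset of $\mathfrak{H}$ vanishes. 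By the standard fact that exponentials $\exp(W(h))$ with $h$ ranging over a total subset of $\mathfrak{H}$ span a dense subspace of $L^2$ of the Gaussian $\sigma$-algebra (this is precisely the total-ness of stochastic exponentials underlying the chaos decomposition), it follows that $F = 0$ a.s. Since $F$ was an arbitrary element orthogonal to $\mathcal{D}$, the span of $\mathcal{D}$ is dense.

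\textit{Where the real work sits.} The main point requiring care — rather than a serious obstacle — is handling the \emph{infinite} index set $\mathbb{Z}_0^2$ together with the \emph{finite}-support structure built into $G = \bigcup_n G_n$. One must check that exponentials using only frequencies in $M_n = \{|k| \leq n\}$, as $n \to \infty$, are rich enough to separate $L^2$ of the full filtration $\overline{\mathcal{G}}_T$ generated by \emph{all} the $W^k$. This is where I would invoke that $\overline{\mathcal{G}}_T = \overline{\bigvee_n \sigma(W^k, k \in M_n)}$, so that any $F \in \mathcal{H}$ is an $L^2$-limit of its conditional expectations onto the finite-frequency sub-filtrations; combined with the finite-dimensional density from \cite{Nualart2006}, this upgrades finite-frequency totality to totality in $\mathcal{H}$. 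All the analytic content is standard Gaussian analysis, so I would simply cite \cite[Ch.~1]{Nualart2006} for the chaos decomposition and the totality of stochastic exponentials, and limit the argument to the reorganization and the monotone exhaustion just described.
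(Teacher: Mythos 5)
Your argument is correct and is essentially the same as the paper's, which simply recalls this standard result by citing the Wiener chaos decomposition in \cite[Ch.~1]{Nualart2006}; your orthogonality argument via totality of stochastic exponentials, together with the exhaustion of $\overline{\mathcal{G}}_T$ by the finite-frequency $\sigma$-algebras, is precisely the content of that reference. Nothing further is needed.
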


\subsubsection{Lagrangian description  and  stochastic Fokker-Planck equation }\label{sub-why-FP}	
Let $X_t\in \mathbb{R}^2$ and $R_t\in \mathbb{R}^2$ be     the position and end-to-end vector of the polymer. In the Introduction we have generically stated that they satisfies \eqref{Intro1}. Here we shall be more specific. Let $(X_t,R_t)$ satisfying
\begin{align}\label{eqn-Larg-short}
	\begin{cases}
	dR_t&=\nabla u(X_t,t) R_tdt-\dfrac{1}{\beta}R_tdt+\sqrt{2}\sigma d\mathcal{W}_t, \\
	dX_t&=u(X_t,t)dt,
	\end{cases}
	\end{align}
 where $\nabla$    is  the gradient    with    respect to  $x$-variable.
    We assume that the velocity field $u$ is the sum of a large scale divergence-free component $u_L(x,t)$ (deterministic, with a reasonable smoothness specified below) plus a stochastic small-scale component, precisely given by the noise coefficients introduced above, in other words
\begin{align}\label{turb-velocity-2D-eq}
  u(x,t):= u^N(x,t)=u_L(x,t)+
 \circ\sum_{k\in K}\sigma_k^N(x)\partial_t W^k, 
\end{align}
 where we choose the Stratonovich multiplication both in virtue of Wong-Zakai principle (a white noise is the idealization of smooth noise) and because of conservation laws. Since the Brownian motions $(\mathcal{W}_t)_t$, due to thermal force( microscopic level), in \eqref{eqn-Larg-short} acts on a very short time scale and also that the polymer length  is  much smaller than the viscous length of the turbulent flow, we write firstly the FP equations with respect to it,  with quenched velocity. Then we use the random velocity  given  by \eqref{turb-velocity-2D-eq} in the resulting  FP equation, with  Brownian motions $(W_t^k)_t^{k\in K}$ having a different  time scale (mesoscopic/macroscopic level). 
   Formally speaking, we may associate a stochastic Fokker-Planck equation in Stratonovich form:
 \begin{align}\label{Stra-FP}
\begin{cases}\partial_tf^N(x,r,t)&+\Div_x(u_L(x,t)f^N(x,r,t))+\Div_r((\nabla u_L(t,x)r-\dfrac{1}{\beta}r)f^N(x,r,t))=	\sigma^2\Delta_rf^N(x,r,t)\\	&\quad -\sum_{k\in K}\sigma_k^N.\nabla_xf^N(x,r,t)\circ\partial_tW^k -\sum_{k\in K}(\nabla \sigma_k^Nr).\nabla_rf^N(x,r,t)\circ \partial_tW^k\\	
	f^N|_{t=0}&=f_0,
	\end{cases}\end{align}
 where we have used the properties $\Div_x(\sigma_k^N)=0,    k\in    K$, $\Div_x(u_L)=0$    and  $\Div_r(\nabla \sigma_k^Nr)=0$\footnote{$\Div_r(\nabla \sigma_k^Nr)=0$  is  consequence of  $\Div_x(\sigma_k^N)=0$.}. Therefore,  \eqref{Stra-FP}  will be used as the main equation we study in this work. \\
 
 Before we formulate rigorously the meaning of the equation \eqref{Stra-FP}, let us rewrite the previous equation (formally) from the Stratonovich to the   It\^o  form. The question is the form of the {It\^o-Stratonovich  corrector. This computations requires some additional care with respect to previously known cases developed in the literature, hence we devote to it a separate section.
  
		\section{It\^o-Stratonovich  correctors}\label{Corrector section}
   		Let       $\psi$ be a given smooth  function  and			set $Q(x,y):=\sum_{k\in K}\sigma_k^N(x)\otimes\sigma_k^N(y).$ 
	In this part, we will write the It\^o form associated with  \eqref{Stra-FP} when $Q$ is space-homogeneous and has a mirror symmetry property.\\	
	
	Denote by $L_k\psi=-(\sigma_k^N.\nabla_x\psi +(\nabla \sigma_k^Nr).\nabla_r\psi)$ then the corrector term is given by $\dfrac{1}{2}\sum_{k\in K}L_kL_k\psi$. Recall that $\Div_x(\sigma_k^N)=0$ and let us compute $L_kL_k\psi.$
	\begin{align*}
	L_kL_k\psi=&-(\sigma_k^N.\nabla_xL_k\psi +(\nabla \sigma_k^Nr).\nabla_rL_k\psi)\\	
	=&(\sigma_k^N.\nabla_x(\sigma_k^N.\nabla_x\psi +(\nabla \sigma_k^Nr).\nabla_r\psi) +(\nabla \sigma_k^Nr).\nabla_r(\sigma_k^N.\nabla_x\psi +(\nabla \sigma_k^Nr).\nabla_r\psi)\\	
	=&\Div_x((\sigma_k^N \otimes \sigma_k^N) \nabla_x\psi)+\Div_r(((\nabla \sigma_k^Nr) \otimes (\nabla \sigma_k^Nr)) \nabla_r\psi)\\&
+	\sigma_k^N.\nabla_x((\nabla \sigma_k^Nr).\nabla_rf)+(\nabla \sigma_k^Nr).\nabla_r(\sigma_k^N.\nabla_x\psi).
	\end{align*}

 \begin{lemma}\label{lem1}
		Assume  the noise is space-homogeneous    i.e. $Q(x,y)=Q(x-y)$ and  $Q(x,x)=Q(0)$, be  a constant matrix, then 	\begin{align*}
I(\psi)=I^1(\psi)+I^2(\psi)&=\sum_{k\in K}	\sigma_k^N.\nabla_x((\nabla \sigma_k^Nr).\nabla_r\psi)+(\nabla \sigma_k^Nr).\nabla_r(\sigma_k^N.\nabla_x\psi)\\&=2	\sum_{l,\gamma,i=1}^2\partial_{x_\gamma} Q_{i,l}(0)\partial_{x_l}(r_\gamma\partial_{r_i}\psi).	\end{align*}
If  moreover $Q$    has the  mirror symmetry property   i.e.    $Q(x)=Q(-x)$, then $I(\psi)=0$.
	\end{lemma}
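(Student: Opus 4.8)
The plan is to start from the expansion of $L_kL_k\psi$ obtained just above the statement. Its first two terms, $\Div_x((\sigma_k^N\otimes\sigma_k^N)\nabla_x\psi)$ and $\Div_r(((\nabla\sigma_k^N r)\otimes(\nabla\sigma_k^N r))\nabla_r\psi)$, are the genuine second-order diffusion pieces in $x$ and in $r$; after summing over $k\in K$, what remains is precisely the cross term $I(\psi)=I^1(\psi)+I^2(\psi)$. I would expand $I^1$ and $I^2$ by the Leibniz and chain rules, using only that $\sigma_k^N$ is independent of $r$ and that mixed partials commute. One finds that $I^2(\psi)=\sum_{k}\sum_{\gamma,i,j}\sigma_k^\gamma r_j\,\partial_{x_j}\sigma_k^i\,\partial^2_{x_\gamma r_i}\psi$ reappears verbatim as one of the two pieces of $I^1$, while the other piece of $I^1$ carries the second derivative $\partial^2_{x_\gamma x_j}\sigma_k^i$. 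Thus $I(\psi)=A(\psi)+B(\psi)$ with
\[
A(\psi)=\sum_{k\in K}\sum_{\gamma,i,j}\sigma_k^\gamma r_j\,\partial^2_{x_\gamma x_j}\sigma_k^i\,\partial_{r_i}\psi,\qquad B(\psi)=2\sum_{k\in K}\sum_{\gamma,i,j}\sigma_k^\gamma r_j\,\partial_{x_j}\sigma_k^i\,\partial^2_{x_\gamma r_i}\psi,
\]
and the whole content of the lemma is that $A(\psi)=0$ while $B(\psi)$ equals the announced expression.

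The key device is to differentiate the homogeneity relation and evaluate on the diagonal. Writing $z=x-y$ for the argument of the homogeneous kernel and recalling $Q_{i,l}(x,y)=\sum_k\sigma_k^i(x)\sigma_k^l(y)$, one differentiation in $x$ followed by $x=y$ yields $\partial_{x_\gamma}Q_{i,l}(0)=\sum_k\sigma_k^l\,\partial_{x_\gamma}\sigma_k^i$, and two differentiations in $x$ followed by $x=y$ yield $\sum_k\sigma_k^l\,\partial^2_{x_j x_\gamma}\sigma_k^i=\partial^2_{z_j z_\gamma}Q_{i,l}(0)$. Inserting the first identity into $B(\psi)$ and using $\partial_{x_l}(r_\gamma\partial_{r_i}\psi)=r_\gamma\,\partial^2_{x_l r_i}\psi$, a relabelling of dummy indices turns $B(\psi)$ into $2\sum_{l,\gamma,i}\partial_{x_\gamma}Q_{i,l}(0)\,\partial_{x_l}(r_\gamma\partial_{r_i}\psi)$, which is exactly the claimed formula.

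The main obstacle is the vanishing of $A(\psi)$. Using the second identity with $l=\gamma$, the coefficient of $r_j\partial_{r_i}\psi$ in $A(\psi)$ becomes $\sum_\gamma\partial^2_{z_j z_\gamma}Q_{i,\gamma}(0)=\partial_{z_j}\big(\sum_\gamma\partial_{z_\gamma}Q_{i,\gamma}\big)(0)$, i.e. the $z_j$-derivative of the divergence of $Q_{i,\cdot}$ taken in its \emph{second} slot. I would prove this divergence vanishes identically by combining two facts. First, the incompressibility $\Div_x\sigma_k^N=0$ gives $\sum_\gamma\partial_{z_\gamma}Q_{\gamma,i}\equiv0$, the divergence in the \emph{first} slot. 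Second, the elementary symmetry $Q_{i,\gamma}(z)=Q_{\gamma,i}(-z)$, immediate from the definition of $Q$, converts a second-slot divergence into a first-slot divergence evaluated at $-z$ (up to sign). Hence $\sum_\gamma\partial_{z_\gamma}Q_{i,\gamma}\equiv0$, all its derivatives vanish, and $A(\psi)=0$. This is the step where homogeneity is genuinely used, jointly with the divergence-free structure of the noise.

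Finally, under the additional mirror symmetry $Q(z)=Q(-z)$ every component $Q_{i,l}$ is an even function, so its gradient at the origin vanishes: $\partial_{x_\gamma}Q_{i,l}(0)=0$ for all $\gamma,i,l$. Since these are exactly the coefficients appearing in the formula for $I(\psi)=B(\psi)$, we conclude $I(\psi)=0$.
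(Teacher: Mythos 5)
Your proposal is correct and follows essentially the same route as the paper's proof in \autoref{Appendix-NEW 1}: the same splitting of $I$ into the cross term, which both you and the paper identify with $2\sum_{l,\gamma,i}\partial_{x_\gamma}Q_{i,l}(0)\,\partial_{x_l}(r_\gamma\partial_{r_i}\psi)$ via the constancy of the diagonal derivatives of the homogeneous kernel, and the term carrying second derivatives of $\sigma_k^N$, which is killed using homogeneity together with $\Div_x(\sigma_k^N)=0$. Your way of organizing the vanishing of that second term --- as the $z_j$-derivative of the identically vanishing second-slot divergence of $Q$, obtained from the first-slot divergence via the transpose symmetry $Q_{i,\gamma}(z)=Q_{\gamma,i}(-z)$ --- is only a cosmetic repackaging of the two Leibniz manipulations the paper performs, and the mirror-symmetry conclusion is identical.
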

\begin{proof}
Let $\psi$ be a smooth   function    (we drop the dependence of  $\sigma_k$ on $N$ here for the simplicity of notation). We have
		\begin{align*}
I^1(\psi)&=	\sum_{k\in K} \sum_{l,\gamma,i=1}^2\sigma_k^l\partial_{x_l} (\partial_{x_\gamma}\sigma_k^ir_\gamma\partial_{r_i}\psi)\\&= \sum_{k\in K} \sum_{l,\gamma,i=1}^2\sigma_k^l(\partial_{x_l} \partial_{x_\gamma}\sigma_k^i)r_\gamma\partial_{r_i}\psi+\sum_{k\in K} \sum_{l,\gamma,i=1}^2\sigma_k^l \partial_{x_\gamma}\sigma_k^i\partial_{x_l}(r_\gamma\partial_{r_i}\psi).
	\end{align*}
First,	let us compute the second term in the last equation. We have 	\begin{align*}
			\sum_{k\in K}  \sigma_k^l(y) \partial_{x_\gamma}\sigma_k^i(x)=
			\partial_{x_\gamma}	\sum_{k\in K}  \sigma_k^l(y)\sigma_k^i(x)=\partial_{x_\gamma} Q_{i,l}(x-y), 
			\end{align*}
			which gives ( we recall that $Q$ is space-homogeneous)
			\begin{align}\label{2--}
			\sum_{k\in K}  \sigma_k^l(x) \partial_{x_\gamma}\sigma_k^i(x)=\partial_{x_\gamma} Q_{i,l}(0).
			\end{align}
			Thus $
			\displaystyle\sum_{k\in K} \sum_{l,\gamma,i=1}^2\sigma_k^l \partial_{x_\gamma}\sigma_k^i\partial_{x_l}(r_\gamma\partial_{r_i}\psi)=\sum_{l,\gamma,i=1}^2\partial_{x_\gamma} Q_{i,l}(0)\partial_{x_l}(r_\gamma\partial_{r_i}\psi).$
		Next, let us compute $I^2$				\begin{align*}
				I^2(\psi)&=\sum_{k\in K}	(\nabla \sigma_k^Nr).\nabla_r(\sigma_k^N.\nabla_x\psi)=	\sum_{k\in K} \sum_{l,\gamma,i=1}^2 \partial_{x_\gamma}\sigma_k^ir_\gamma\partial_{r_i}(\sigma_k^l\partial_{x_l} \psi)\\
				&=	\sum_{k\in K} \sum_{l,\gamma,i=1}^2 \partial_{x_\gamma}\sigma_k^i\sigma_k^l\partial_{x_l}(r_\gamma\partial_{r_i} \psi)=\sum_{l,\gamma,i=1}^2\partial_{x_\gamma} Q_{i,l}(0)\partial_{x_l}(r_\gamma\partial_{r_i}\psi).
					\end{align*}
				Now,	let us prove that the first part of $I^1$ vanishes.    Namely
					\begin{align*}	
					\sum_{k\in K} \sum_{l,\gamma,i=1}^2\sigma_k^l(\partial_{x_l} \partial_{x_\gamma}\sigma_k^i)r_\gamma\partial_{r_i}\psi=
				 \sum_{\gamma,i=1}^2 	\left(\sum_{l}^2\sum_{k\in K}\sigma_k^l\partial_{x_l} \partial_{x_\gamma}\sigma_k^i\right)r_\gamma\partial_{r_i}\psi=0.
						\end{align*}
						It is sufficient to show that  $\displaystyle\sum_{l}^2\sum_{k\in K}\sigma_k^l\partial_{x_l} \partial_{x_\gamma}\sigma_k^i=0.$ Indeed,    notice    that
							\begin{align*}	
							\sum_{l}^2\sum_{k\in K}\sigma_k^l\partial_{x_l} \partial_{x_\gamma}\sigma_k^i=\sum_{l}^2\partial_{x_\gamma}\sum_{k\in K}\sigma_k^l\partial_{x_l} \sigma_k^i-\sum_{l}^2\sum_{k\in K}\partial_{x_\gamma}\sigma_k^l\partial_{x_l} \sigma_k^i=-\sum_{l}^2\sum_{k\in K}\partial_{x_\gamma}\sigma_k^l\partial_{x_l} \sigma_k^i,
								\end{align*}
							where we used similar arguments 	 to  the    one used    to  obtain  \eqref{2--} to get 
       \begin{align*}\sum_{k\in K}  \sigma_k^l(x) \partial_{x_l}\sigma_k^i(x)=\partial_{x_l} Q_{i,l}(0) \text{ and  }  \partial_{x_\gamma}\sum_{k\in K}\sigma_k^l\partial_{x_l} \sigma_k^i=0.
       \end{align*} On the other hand, note that
							\begin{align*}			
								\sum_{l}^2\sum_{k\in K}\partial_{x_\gamma}\sigma_k^l\partial_{x_l} \sigma_k^i=	\sum_{l}^2 \partial_{x_l}\sum_{k\in K}\partial_{x_\gamma}\sigma_k^l \sigma_k^i- \sum_{k\in K}\partial_{x_\gamma} (\sum_{l}^2\partial_{x_l}\sigma_k^l) \sigma_k^i=\sum_{l}^2 \partial_{x_l}\sum_{k\in K}\partial_{x_\gamma}\sigma_k^l \sigma_k^i,
										\end{align*}
										since $\Div_x(\sigma_k)=0.$ Again, note that 
										$		
										\sum_{k\in K}\partial_{x_\gamma}\sigma_k^l \sigma_k^i=\partial_{x_\gamma} Q_{l,i}(0).
											$
												Therefore
										\begin{align*}		
												\partial_{x_l}\sum_{k\in K}\partial_{x_\gamma}\sigma_k^l \sigma_k^i=\partial_{x_l}(\partial_{x_\gamma} Q_{l,i}(0))=0.
												\end{align*}
												Summing up, we get
											$				I(\psi)= 2	\sum_{l,\gamma,i=1}^2\partial_{x_\gamma} Q_{i,l}(0)\partial_{x_l}(r_\gamma\partial_{r_i}\psi).	$
												If $Q(x)=Q(-x)$ and $Q$ is smooth function, we see that $\partial_{x_\gamma} Q_{i,l}(0)=0$ and the second part of   \autoref{lem1} follows.
\end{proof}

Moreover, the correctors have a special form.

\begin{lemma}\label{lem-matrix-r} The following equalities    hold
	\begin{align*}
\dfrac{1}{2}	\Div_x(\sum_{k\in K}(\sigma_k^N \otimes \sigma_k^N) \nabla_xf)&=
\dfrac{1}{2}\sum_{k\in K_{++}}(\theta_{\left\vert k\right\vert }^N)^2	\Delta_xf:=\alpha_N\Delta_xf,\\
\sum_{k\in K}\left((\nabla \sigma_k^Nr) \otimes (\nabla \sigma_k^Nr)\right) &= A(r)+O(\dfrac{1}{N})P(r),	\end{align*}
where $A(r)
=k_T (3 \left\vert r\right\vert^2I-2r\otimes r) =k_T (\left\vert r\right\vert^2I+2r^\perp\otimes r^\perp) , $
  $k_T=\dfrac{\pi \log(2)}{8}a^2$, $r=(r_1,r_2)$ and $P$ is a polynomial of second degree.
\end{lemma}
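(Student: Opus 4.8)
The plan is to prove the two displayed identities separately: the first is an exact finite-sum identity, the second an asymptotic one, but both start by removing the spatial oscillation. For every $k\in K_+$ the opposite frequency $-k$ lies in $K_-$, and since $\theta^N$ depends only on $|k|$, $(-k)^\perp\otimes(-k)^\perp=k^\perp\otimes k^\perp$, and $\cos^2(k\cdot x)+\sin^2(k\cdot x)=1$, so pairing $k$ with $-k$ turns both $x$-dependent sums into constant-in-$x$ sums over $K_+$. Concretely I would first record that $\sum_{k\in K}\sigma_k^N\otimes\sigma_k^N=\sum_{k\in K_+}(\theta_k^N)^2\,|k|^{-2}\,k^\perp\otimes k^\perp$ and, using the explicit form of $\nabla\sigma_k^N$ recalled above (so that $\nabla\sigma_k^N r=\mp\theta_k^N|k|^{-1}(k\cdot r)\,k^\perp\sin/\cos(k\cdot x)$), that $\sum_{k\in K}(\nabla\sigma_k^N r)\otimes(\nabla\sigma_k^N r)=\sum_{k\in K_+}(\theta_k^N)^2\,|k|^{-2}\,(k\cdot r)^2\,k^\perp\otimes k^\perp$. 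Both right-hand sides are $x$-homogeneous, as required before applying $\Div_x$.

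For the first identity I would exploit the $\pi/2$-rotation symmetry of the admissible frequencies. Since the summand is even in $k$, $\sum_{k\in K_+}(\theta_k^N)^2|k|^{-2}k^\perp\otimes k^\perp=\tfrac12\sum_{k\in\mathbb{Z}_0^2}(\theta_k^N)^2|k|^{-2}k^\perp\otimes k^\perp$. Writing $J$ for the rotation by $\pi/2$ (so $k^\perp=Jk$), the reindexing $k\mapsto Jk$ leaves $\mathbb{Z}_0^2$, the shell $N\le|k|\le 2N$ and the weights $(\theta_k^N)^2$ invariant while sending $k^\perp\otimes k^\perp$ to $k\otimes k$; hence the $\mathbb{Z}_0^2$-sum equals the same expression with $k^\perp\otimes k^\perp$ replaced by $k\otimes k$. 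Adding the two and using $k\otimes k+k^\perp\otimes k^\perp=|k|^2 I$ shows the sum is a scalar multiple of $I$; identifying the scalar and using that the four quadrants carry equal weight gives $\sum_{k\in K}\sigma_k^N\otimes\sigma_k^N=\big(\sum_{k\in K_{++}}(\theta_k^N)^2\big)I$. As this is constant in $x$, $\Div_x$ reduces to $\Delta_x$ and the first equality follows with $\alpha_N=\tfrac12\sum_{K_{++}}(\theta_k^N)^2$.

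For the second identity I would insert $\theta_k^N=a/|k|^2$ and pass to polar variables $k=\rho\,\hat k$, $\hat y=y/|y|$, reducing the sum to $\tfrac{a^2}{2}\sum_{k\in\mathbb{Z}_0^2}|k|^{-2}(\hat k\cdot r)^2\,\hat k^\perp\otimes\hat k^\perp$, whose summand is the smooth function $F(y)=a^2|y|^{-2}(\hat y\cdot r)^2\,\hat y^\perp\otimes\hat y^\perp$ sampled at lattice points of the annulus. The plan is to replace this lattice sum by the integral $\tfrac12\int_{N\le|y|\le 2N}F(y)\,dy$, which factors as a radial integral $\int_N^{2N}\rho^{-1}\,d\rho=\log 2$ times the angular integral $\int_0^{2\pi}(\hat y\cdot r)^2\,\hat y^\perp\otimes\hat y^\perp\,d\phi$. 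The latter is an elementary computation of second- and fourth-order trigonometric moments and equals $\tfrac{\pi}{4}\big(3|r|^2 I-2r\otimes r\big)$. Collecting the factors $\tfrac12\cdot a^2\cdot\log 2\cdot\tfrac{\pi}{4}$ produces exactly $A(r)$ with $k_T=\tfrac{\pi\log 2}{8}a^2$, consistent with both stated forms of $A(r)$ via $|r|^2 I+2r^\perp\otimes r^\perp=3|r|^2 I-2r\otimes r$.

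The main obstacle is the remaining error estimate, namely controlling the difference between the discrete shell sum and the annular integral and showing it is $O(1/N)P(r)$. I would use a tiling/Riemann-sum bound: $F$ is $C^1$ away from the origin with $|\nabla_y F|\lesssim|r|^2|y|^{-3}$ on the shell, so the interior error is bounded by $C\int_{N\le|y|\le 2N}|\nabla_y F|\,dy\lesssim|r|^2\int_N^{2N}\rho^{-2}\,d\rho\lesssim|r|^2/N$, with the cells straddling the boundary circles $|y|=N,2N$ contributing at the same order. Since every entry of $F$ is a quadratic form in $r$ with bounded coefficients, the total error is $O(1/N)$ times a second-degree polynomial $P(r)$, as claimed. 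The symmetry reductions and the angular integral are routine; the uniform Riemann-sum control across a wide annulus, including the boundary layer, is the only genuinely technical point.
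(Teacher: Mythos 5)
Your proposal is correct and follows essentially the same route as the paper: the $\pm k$ pairing plus the quarter-turn symmetry for the first identity, and the Riemann-sum approximation of the shell sum by an annular integral computed in polar coordinates (radial factor $\log 2$, angular factor $\tfrac{\pi}{4}(3|r|^2 I-2r\otimes r)$ over the full circle) for the second, with matching constants. The only differences are bookkeeping — you sum over the full lattice with a factor $1/2$ instead of the half-lattice $K_+$ and work on the shell $N\le |k|\le 2N$ directly rather than rescaling to $1\le |x|\le 2$ — and your explicit gradient bound for the Riemann-sum error is, if anything, more detailed than the paper's.
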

\begin{proof} 
Let us simplify the expressions of 
$$ \mathcal{S}(f):=\dfrac{1}{2}\Div_r(\sum_{k\in K}\left((\nabla \sigma_k^Nr) \otimes (\nabla \sigma_k^Nr)\right) \nabla_rf) \text{ and } \mathcal{B}(f):=\dfrac{1}{2}	\Div_x(\sum_{k\in K}(\sigma_k^N \otimes \sigma_k^N) \nabla_xf).$$
First,  we  consider    the term $\mathcal{B}$.
Recall that 
\begin{align*}
\sigma_{k}^N\left(  x\right)     =\theta_{\left\vert k\right\vert}^N\frac{k^{\perp}}{\left\vert
	k\right\vert }\cos k\cdot x,\quad k\in K_{+},\qquad
\sigma_{k}^N\left(  x\right)     =\theta_{\left\vert k\right\vert }^N\frac{k^{\perp}}{\left\vert
	k\right\vert }\sin k\cdot x,\quad k\in K_{-}.%
\end{align*}
We have
\begin{align*}
\sum_{k\in K}\sigma_k^N \otimes \sigma_k^N&=\sum_{k\in K_+}(\theta_{\left\vert k\right\vert }^N)^2\frac{k^{\perp}\otimes k^{\perp}}{\left\vert
	k\right\vert^2 }\cos^2 k\cdot x+\sum_{k\in K_-}(\theta_{\left\vert k\right\vert }^N)^2\frac{k^{\perp}\otimes k^{\perp}}{\left\vert
	k\right\vert^2 }\sin^2 k\cdot x\\
(k\in K_- \to -k\in K_+)&=\sum_{k\in K_+}(\theta_{\left\vert k\right\vert }^N)^2\frac{k^{\perp}\otimes k^{\perp}}{\left\vert
	k\right\vert^2 }\\
( k\in K_{+-} \to k^{\perp} \in K_{++})&=\sum_{k\in K_{++}}(\theta_{\left\vert k\right\vert }^N)^2\left(\frac{k^{\perp}\otimes k^{\perp}}{\left\vert
k\right\vert^2 }+\frac{k\otimes k}{\left\vert
k\right\vert^2 }\right)\quad  (K_+=K_{++}\cup K_{+-}).
\end{align*}
Thus $
\sum_{k\in K}\sigma_k^N \otimes \sigma_k^N=\sum_{k\in K_{++}}(\theta_{\left\vert k\right\vert }^N)^2 I; \quad I= \left(
\begin{array}
[c]{cc}%
1& 0\\
0 &  1
\end{array}
\right) $   and
therefore
\begin{align*}
\dfrac{1}{2}	\Div_x(\sum_{k\in K}(\sigma_k^N \otimes \sigma_k^N) \nabla_xf)=
\dfrac{1}{2}\sum_{k\in K_{++}}(\theta_{\left\vert k\right\vert }^N)^2	\Delta_xf:=\alpha_N\Delta_xf. \end{align*}
On  the other   hand,
let us   present some properties of $\mathcal{S}$. 
  We have (with slight abuse of notation we use $\sigma_k$ instead of $\sigma_k^N$ )
		\begin{align*}	
	&\left(\sum_{k\in K}(\nabla \sigma_kr) \otimes (\nabla \sigma_kr)\right)_{i,l}=	\sum_{j,\alpha =1}^2\sum_{k\in K}(\partial_{x_j} \sigma_k^ir_j \partial_{x_\alpha} \sigma_k^lr_\alpha)\\
	&=\sum_{k\in K_+}(\theta_{\left\vert k\right\vert }^N)^2(k\cdot r)^2\frac{k^{\perp}\otimes k^{\perp}}{\left\vert
		k\right\vert^2 }\sin^2 k\cdot x+\sum_{k\in K_-}(\theta_{\left\vert k\right\vert }^N)^2(k\cdot r)^2\frac{k^{\perp}\otimes k^{\perp}}{\left\vert
		k\right\vert^2 }\cos^2 k\cdot x\\
&	(k\in K_- \to -k\in K_+)=\sum_{k\in K_+}(\theta_{\left\vert k\right\vert }^N)^2(k\cdot r)^2\frac{k^{\perp}\otimes k^{\perp}}{\left\vert
		k\right\vert^2 }=a^2\sum_{\substack {k\in K_{+}\\N \leq \left\vert k\right\vert \leq  2N}}\frac{1}{\left\vert
		k\right\vert^6 }(k\cdot r)^2k^{\perp}\otimes k^{\perp}.
	\end{align*}
Now, let us compute $\displaystyle\sum_{\substack {k\in K_{+}\\N \leq \left\vert k\right\vert \leq  2N}}\frac{1}{\left\vert
	k\right\vert^6 }(k\cdot r)^2k^{\perp}\otimes k^{\perp}.$ Note that 
	\begin{align*}
\sum_{\substack {k\in K_{+}\\N \leq \left\vert k\right\vert \leq  2N}}\frac{1}{\left\vert
	k\right\vert^6 }(k\cdot r)^2k^{\perp}\otimes k^{\perp}=
\frac{1}{N^2}\sum_{\substack {k\in K_{+}\\1 \leq \frac{\left\vert k\right\vert}{N} \leq  2}}\frac{N^6}{\left\vert
k\right\vert^6 }\frac{1}{N^2}(k\cdot r)^2\frac{1}{N^2}(k^{\perp}\otimes k^{\perp}).
	\end{align*}
	Note that $h_r(x)=\dfrac{1}{x^6}(x\cdot r)^2(x^{\perp}\otimes x^{\perp})$ is smooth function   for $1 \leq \left\vert x\right\vert\leq  2.$ By using Riemann sum, we get
	\begin{align*}
	\frac{1}{N^2}\sum_{\substack {k\in K_{+}\\1 \leq \frac{\left\vert k\right\vert}{N} \leq  2}}\frac{N^6}{\left\vert
		k\right\vert^6 }\frac{1}{N^2}(k\cdot r)^2\frac{1}{N^2}(k^{\perp}\otimes k^{\perp})=\int_{D}h_r(x)dx+ O(\frac{1}{N})P(r),
	\end{align*}
	where $D=\{x=(\vert x\vert\cos(\varphi),\vert x\vert\sin(\varphi)): 1\leq \vert x\vert \leq 2  \text{ and }  \varphi \in D_\pi:=]0, \dfrac{\pi}{2}]\cup ]\dfrac{3\pi}{2}, 2\pi] \}$ and $P$ is a polynomial of second degree.   On the other hand, we  have
	\begin{align*}
\int_{D}h_r(x)dx=\int_1^2\dfrac{1}{z}\int_{D_\pi}( r_1\cos(\varphi)+r_2\sin(\varphi))^2 \left(
\begin{array}
[c]{cc}%
\sin^2(\varphi) & -\sin(\varphi)\cos(\varphi)\\
-\sin(\varphi)\cos(\varphi) &  \cos^2(\varphi)
\end{array}
\right)d\varphi   dz.
\end{align*}
Let us compute the following integral
	\begin{align*}
I(r):=\int_{D_\pi}( r_1\cos(\varphi)+r_2\sin(\varphi))^2 \left(
\begin{array}
	[c]{cc}%
	\sin^2(\varphi) & -\sin(\varphi)\cos(\varphi)\\
	-\sin(\varphi)\cos(\varphi) &  \cos^2(\varphi)
\end{array}
\right)d\varphi.
\end{align*}
A   standard    integration with    respect to  $\varphi$   gives
\begin{align*}I(r)=\frac{\pi }{8} \left(
\begin{array}
[c]{cc}%
3 \left\vert r\right\vert^2-2r_1^2& -2r_1r_2\\
-2r_1r_2 &  3 \left\vert r\right\vert^2-2r_2^2
\end{array}
\right) ; r=(r_1,r_2). \end{align*} 
Since $\displaystyle\int_1^2\dfrac{1}{z} dz=\log(2),$ we get the final expression of $A(r)$.
\end{proof}

\begin{remark}
    When $\theta^N_{k}=\dfrac{a}{\left\vert k\right\vert^2 }$ if $ N \leq \left\vert k\right\vert \leq  2N, N\in \mathbb{N}^* $ and $\theta^N_{k}=0$ elsewhere,  we get   
\begin{align}\label{alpha-N}
0<\dfrac{\pi}{64}\dfrac{a^2}{N^3}  \leq \alpha_N \leq  \dfrac{\pi}{4}\dfrac{a^2}{N^3} <a^2  \text{  and }   \alpha_N\to 0.\end{align}
\end{remark}

Based on these two lemmata, the It\^o form (still formulated only fomally) of the  stochastic   Fokker Planck equation     \eqref{Stra-FP},
by  assuming that  the noise is space-homogeneous and  satisfies mirror symmetry property,
 is given by\footnote{Recall that $f$ depends on $t,x,r,\omega$ and $N$ but we don't stress the dependence on the above variables for the simplicity of notation, that is, with slight abuse of notation $f^N:=f^N(t,x,r,\omega)$.}

	\begin{align}\label{Ito-FP}
\begin{cases}&df^N+\Div_x(u_L(x,t)f^N)dt+\Div_r((\nabla u_L(t,x)r-\dfrac{1}{\beta}r)f^N) dt\\	&=	\sigma^2\Delta_rf^Ndt-\sum_{k\in K}\sigma_k^N.\nabla_xf^NdW^k -\sum_{k\in K}(\nabla \sigma_k^Nr).\nabla_rf^N dW^k\\
&\quad +\alpha_N\Delta_x f^Ndt+ \dfrac{1}{2}\Div_r(\sum_{k\in K}\left((\nabla \sigma_k^Nr) \otimes (\nabla \sigma_k^Nr)\right) \nabla_rf^N)dt\\
&f^N|_{t=0}=f_0.
\end{cases}\end{align}
where 
$$\sum_{k\in K}\left((\nabla \sigma_k^Nr) \otimes (\nabla \sigma_k^Nr)\right) = A(r)+O(\dfrac{1}{N})P(r).$$

\subsubsection{Assumptions on large scale component}
Let $T>0$. In  the following,  we  assume  that   $u_L\in C([0,T],C^2(\T^2;\R^2))$    such    that    $\Div_x(u_L)=0.$  

\section{Main   results}\label{section-main-result}

Following \cite{NevesOlivera,FedrizziNevesOlivera,FlaOli2018}.
We introduce the concept of   \textit{"quasi-regular weak solution"}     to  \eqref{Ito-FP},  where   we    prove   the well-posedness. Notice that  uniqueness in this class (sometimes called Wiener uniqueness)  is weaker     than    pathwise uniqueness.

\begin{definition}(Quasi-regular weak solution)\label{Def-sol}
 Let    $f_0\in H$    and   $N\in   \mathbb{N}$.   We  say that  $f^N$  is   quasi-regular weak solution   to  \eqref{Ito-FP}  if  
	$f^N$    is $(\mathcal{F}_t)_t$-adapted and
	\begin{enumerate}
		\item  $f^N \in L^2_{w-*}(\Omega;L^\infty([0, T];H)), \nabla_rf^N\in L^2(\Omega\times [0, T];H),$
		\item   P a.s. in  $\Omega:$  $f^N \in C_w([0, T];H)$\footnote{$C_w([0, T];H)$    denotes 	the	Bochner space of weakly continuous functions with values in $H.$},
		\item P-a.s: for any $t\in ]0, T]$  the following equation holds: 
		\begin{align*}&\int_{\mathbb{T}^2}\int_{\mathbb{R}^2}f^N(t) \phi drdx-\int_{\mathbb{T}^2}\int_{\mathbb{R}^2}f_0 \phi drdx\\&-\int_0^t\int_{\mathbb{T}^2}\int_{\mathbb{R}^2}f^N(s)\left(u_L(s)\cdot \nabla_x \phi + (\nabla u_L(s)r-\dfrac{1}{\beta}r) \cdot\nabla_r \phi \right)dr dxds\\	=&-	\sigma^2\int_0^t\int_{\mathbb{T}^2}\int_{\mathbb{R}^2}\nabla_rf^N(s) \cdot\nabla_r \phi drdxds+\alpha_N\int_0^t\int_{\mathbb{T}^2}\int_{\mathbb{R}^2}f^N(s) \cdot\Delta_x\phi   drdxds\\&+\sum_{k\in K}\int_0^t\int_{\mathbb{T}^2}\int_{\mathbb{R}^2}\big(f^N(s) \sigma_k^N.\nabla_x\phi +f^N(s) (\nabla \sigma_k^Nr).\nabla_r\phi\big) drdxdW^k(s)\\
    		&- \dfrac{1}{2}\int_0^t\int_{\mathbb{T}^2}\int_{\mathbb{R}^2}(\sum_{k\in K}\left((\nabla \sigma_k^Nr) \otimes (\nabla \sigma_k^Nr)\right) \nabla_rf^N(s))\cdot\nabla_r \phi drdxds,\quad   \text{  for any } \phi  \in U.\end{align*}
		
  \item (Regularity in Mean) For all    $n\in   \mathbb{N^*}$   and each function   $g\in   G_n$,   the deterministic   function    $V^N(t,x,r)=\E[f^N(t,x,r)e_g(t)]$ is  a   measurable  function,   which   belongs to  $ L^\infty([0, T];H)\cap    C_w([0, T];H)$  and $\nabla_rV^N\in L^2([0, T];H)$    and satisfies   the following   equation
\begin{align}
\dfrac{d   V^N}{dt}
&+\Div_x([u_L-h_n]V^N)+\Div_r([(\nabla u_Lr)-y_n]-\dfrac{1}{\beta}r)V^N)\notag\\	
&=	\sigma^2\Delta_rV^N+\alpha_N	\Delta_xV^N+ \dfrac{1}{2}\Div_r(\sum_{k\in K}\left((\nabla \sigma_k^Nr) \otimes (\nabla \sigma_k^Nr)\right) \nabla_rV^N),\notag
\end{align}
  in  a  weak    sense   (see    \autoref{prop-exisence-mean}),   where  
  \begin{align*}
  \displaystyle\sum_{k\in K_n}g_k\sigma_k^N=h_n   \text{  and } \displaystyle\sum_{k\in K_n}g_k(\nabla \sigma_k^Nr)=y_n,    \text{  where } K_n=\{k\in K: \min(n,N) \leq    \vert   k\vert  \leq    \max(2N,n)\} . 
      \end{align*}

	\end{enumerate}
	
\end{definition}
\begin{remark}
  The   point   $(3)$   in  \autoref{Def-sol}   is  satisfied   for larger  class   of  test    functions,  namely   $\phi  \in V$  thanks  to  the regularity  properties  of  $f^N.$
\end{remark}
The main   results  are  given   by  the following   theorems. 
\begin{theorem}\label{TH1}  There exists  at    least   one     solution $f^N$ to  \eqref{Ito-FP} in the sense of  \autoref{Def-sol}. Moreover, $(f^N)_N$  and $(\nabla_rf^N)_N$ are bounded in 
	$  L^2_{w-*}(\Omega;L^\infty([0, T];H))$     and $  L^2(\Omega\times [0, T];H) $ respectively.
\end{theorem}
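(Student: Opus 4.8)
The plan is to build a solution of \eqref{Ito-FP} by a Galerkin approximation combined with uniform energy estimates in the weighted space $H=L^2_{r,2}$, to pass to weak-$*$ limits, and finally to recover the regularity-in-mean property (4) from the It\^o product rule. Fix $N$. First I would fix a countable orthonormal system $(e_j)_j$ of $H$ made of smooth, rapidly decaying functions (for instance trigonometric polynomials in $x$ times Hermite-type functions in $r$, so that the coefficients $\nabla u_L\,r$, $\tfrac1\beta r$ and $B_N(r):=\sum_{k\in K}(\nabla\sigma_k^N r)\otimes(\nabla\sigma_k^N r)$ act continuously), project \eqref{Ito-FP} onto $\mathrm{span}(e_1,\dots,e_m)$ and solve the resulting finite-dimensional \emph{linear} It\^o SDE for $f^{N,m}(t)=\sum_{j\le m}c^m_j(t)e_j$; linearity yields a global solution without any growth restriction.

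The core of the argument is an a priori estimate uniform in $m$ and $N$. Applying It\^o's formula to $\|f^{N,m}(t)\|_H^2$, and writing $L_k\psi=-(\sigma_k^N\!\cdot\!\nabla_x\psi+(\nabla\sigma_k^N r)\!\cdot\!\nabla_r\psi)$, the It\^o correction $\sum_k\|L_k f^{N,m}\|_H^2$ must be balanced against the corrector drifts $\alpha_N\Delta_x$ and $\tfrac12\Div_r(B_N\nabla_r\cdot)$. Since by \autoref{lem1} the mixed corrector $I$ vanishes, these two drifts together equal $\tfrac12\sum_k L_k^2$ (with $B_N$ as in \autoref{lem-matrix-r}); consequently the drift they produce in the energy identity, $\sum_k(f,L_k^2 f)_H$, cancels the It\^o correction $\sum_k\|L_k f\|_H^2$ \emph{exactly} (for the full matrix $B_N$, not merely its limit $A$), leaving only the lower-order remainder $2\sum_k\int f\,L_k f\,(\nabla\sigma_k^N r)\!\cdot\!r\,(1+|r|^2)$ that originates from the $r$-dependence of the weight. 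Using $\Div_x\sigma_k^N=0$, $\Div_r(\nabla\sigma_k^N r)=0$, the identity $B_N(r)r=k_T|r|^2 r+O(1/N)P(r)r$ and integrations by parts, I would bound this remainder, together with the transport, stretching and damping contributions, by $C\|f\|_H^2$, with $C$ depending only on $u_L,\beta,\sigma,k_T,a$; the $-\tfrac1\beta r$ damping only helps, and the $O(1/N)$ terms can only improve the bound. The genuine dissipation comes from $\sigma^2\Delta_r$, giving $-2\sigma^2\|\nabla_r f\|_H^2$ up to a $\|f\|_H^2$ term. After a Gronwall argument and a Burkholder--Davis--Gundy estimate (the martingale bracket being controlled by $C\int_0^\cdot\|f\|_H^4$, again uniformly in $N$), this yields the stated uniform bounds for $(f^{N,m})$ in $L^2_{w-*}(\Omega;L^\infty(0,T;H))$ and $(\nabla_r f^{N,m})$ in $L^2(\Omega\times[0,T];H)$.

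These bounds give, for each fixed $N$, a weak-$*$ limit $f^N$ along a subsequence. Because every term of the weak formulation (3) is \emph{linear} in $f^N$, passing to the limit is routine once tested against $\phi\in V$ and a bounded random time-weight: the term with $A(r)$ makes sense because $A(r)\nabla_r\phi$ pairs with $\nabla_r f^N\in H$ (here the choices $V=H^2_{r,4}$ and $H=L^2_{r,2}$ are exactly what absorbs the quadratic growth of $A$), while the stochastic integral is handled as a bounded linear operator on adapted processes in $L^2(\Omega\times[0,T];H)$, hence weak-to-weak continuous, with adaptedness preserved under weak limits. I would first obtain the identity in an $\omega$- and $t$-averaged form and then upgrade it to the pointwise-in-$t$ statement through the weak continuity $f^N\in C_w([0,T];H)$, which follows from the bound and the equation. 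The regularity-in-mean property (4) is then derived by applying the product rule to $f^N e_g$ and taking expectations: the $dW^k$ martingales vanish and the cross-variation $d\langle f^N,e_g\rangle$ produces exactly the additional first-order terms $\Div_x(h_n V^N)$ and $\Div_r(y_n V^N)$, matching the claim of \autoref{prop-exisence-mean}.

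The main obstacle is the interaction of the quadratically growing diffusion $A(r)$ with the non-homogeneous, degenerate weighted spaces. The delicate points are to justify the integration-by-parts manipulations for functions with only polynomial-weight integrability, to verify that the top-order ($|r|^2$- and $|r|^4$-weighted) contributions coming from the stretching noise cancel \emph{before} taking $N\to\infty$, and to check that every surviving remainder is dominated by $\|f\|_H^2$ with a constant independent of $N$ --- this last uniformity being precisely what makes the bounds usable in the diffusion-limit analysis of the next section.
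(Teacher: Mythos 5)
Your overall strategy coincides with the paper's: Galerkin projection, a weighted energy estimate exploiting the cancellation between the It\^o correction and the two corrector drifts, Burkholder--Davis--Gundy plus Gr\"onwall, weak-$*$ limits using linearity, and the product rule with $e_g$ for the regularity-in-mean property. There is, however, one genuine gap at the heart of your a priori estimate.

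You assert that the It\^o correction $\sum_k\|L_kf^{N,m}\|_H^2$ is cancelled \emph{exactly} by the drift $\tfrac12\sum_k L_k^2$, up to lower-order weight terms. At the Galerkin level this is false for the mixed part of the correction. The quadratic-variation term in the finite-dimensional It\^o formula is $\sum_k\|P_mL_kf_m\|_H^2$, and while the two diagonal pieces $\|P_m(\sigma_k^N\!\cdot\!\nabla_xf_m)\|_H^2$ and $\|P_m((\nabla\sigma_k^Nr)\!\cdot\!\nabla_rf_m)\|_H^2$ are indeed dominated by the corrector drifts (the projection only helps, by $\Vert P_m\Vert_{L(H,H)}\le1$), the cross term equals $2\sum_k\bigl(P_m[\sigma_k^N\!\cdot\!\nabla_xf_m],P_m[(\nabla\sigma_k^Nr)\!\cdot\!\nabla_rf_m]\bigr)_H$. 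Without the projections this would vanish by the space-homogeneity and mirror symmetry of $Q$ (\autoref{lem1}, identity \eqref{Cancelation-noise-covariance}); with them it does not, and it involves $\nabla_xf_m$ and $\nabla_rf_m$ with coefficients growing like $|k|\,|r|$, so it is not bounded by $C\|f_m\|_H^2$ uniformly in $m$. Your choice of an explicit trigonometric--Hermite basis does not remove this obstruction, since $P_m$ commutes with neither $\sigma_k^N\!\cdot\!\nabla_x$ nor $(\nabla\sigma_k^Nr)\!\cdot\!\nabla_r$. The paper's remedy is to insert the compensating operator $\mathcal{Y}^m$ of \eqref{extra-ope-projection} directly into the approximate equation, so that the offending term \eqref{new-term} is subtracted before the energy identity is formed (yielding \autoref{Lemma9}), and then to prove separately that $\mathcal{Y}^m$ vanishes as $m\to\infty$ --- see \eqref{limit-covarnaince-0}, which uses $f_0\in H$, the weak convergence of $\nabla_rf_m$, and $\partial_{x_\gamma}Q_{i,l}(0)=0$ --- so that the limit solves the original equation \eqref{Ito-FP}. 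Some device of this kind is indispensable; without it the uniform-in-$m$ estimate, and hence the whole existence proof, does not close.
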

\begin{proof}
  See    \autoref{Subsection-exist-proof}.
\end{proof}

\begin{theorem}\label{TH2}
Under   the assumptions  of  \autoref{TH1},  let $f^N_i,i=1,2,$  be  two quasi-regular    weak    solutions   of  \eqref{Ito-FP}  with    the same    initial data    $f_0$.  Assume  that    $(f^N_i(t),\varphi)$    is  $\overline{\mathcal{G}}_t$-adapted, for both    $i=1,2,$ for    any $\varphi\in V$. Then   $f^N_1=f^N_2.$
\end{theorem}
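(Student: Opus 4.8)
\textbf{Reduction to a deterministic problem.} The plan is to prove this \emph{Wiener uniqueness} statement by duality against the exponential martingales $e_g$, reducing it to uniqueness for a deterministic parabolic equation. Fix $n\in\mathbb{N}^*$ and $g\in G_n$ and set $V_i^N(t)=\E[f_i^N(t)e_g(t)]$, $i=1,2$. Point (4) of \autoref{Def-sol} asserts precisely that each $V_i^N$ is, in the class $L^\infty([0,T];H)\cap C_w([0,T];H)$ with $\nabla_r V_i^N\in L^2([0,T];H)$, a solution of the \emph{same} deterministic equation: the drift corrections $h_n=\sum_k g_k\sigma_k^N$ and $y_n=\sum_k g_k(\nabla\sigma_k^N r)$ are exactly what one obtains by pairing the Itô integrals of \eqref{Ito-FP} against $de_g=\sum_k g_k e_g\,dW^k$ and taking expectations, and the common datum is $V_i^N(0)=f_0$ since $e_g(0)=1$. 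Hence it suffices to show this deterministic equation has at most one solution in that class; then $V_1^N=V_2^N$ for every $g\in G$.

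\textbf{Deterministic uniqueness via a weighted energy estimate.} Write $W=V_1^N-V_2^N$, so $W(0)=0$ and $W$ solves the homogeneous equation. I would test it against $W$ in $H$, i.e. against $W(1+|r|^2)$, made rigorous by convolving in $x$ with $\rho_\delta$ (and truncating in $r$) to produce admissible test functions, then letting $\delta\downarrow0$ and the truncation tend to infinity. The estimate closes thanks to the following structure. The $x$-transport drops out, as $u_L-h_n$ is divergence-free in $x$ and the weight is $x$-independent. The terms $\alpha_N\Delta_x$ and $\sigma^2\Delta_r$ are dissipative, and the only non-dissipative contribution they create comes from differentiating the weight, which reduces through $\Div_r r=2$ to a zeroth-order term $\le C\|W\|_H^2$. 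For the stretching diffusion, by \autoref{lem-matrix-r} the matrix $\Lambda(r):=\sum_{k\in K}(\nabla\sigma_k^N r)\otimes(\nabla\sigma_k^N r)$ is positive semidefinite, depends on $r$ only, and satisfies $|\Lambda(r)|\lesssim|r|^2$; its principal part $-\tfrac12\int(1+|r|^2)\,\Lambda\nabla_r W\cdot\nabla_r W$ is $\le0$, while the cross term $\int(\Lambda\nabla_r W\cdot r)W$, which is \emph{not} controlled by $\nabla_r W\in L^2(H)$ alone, I would integrate by parts in $r$ to rewrite as $\tfrac12\int\Div_r(\Lambda r)\,W^2$, bounded by $C\|W\|_H^2$ since $|\Div_r(\Lambda r)|\lesssim|r|^2\le 1+|r|^2$. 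Finally the $r$-transport drift $b_r=(\nabla u_L)r-y_n-\tfrac1\beta r$ has $\Div_r b_r=-\tfrac2\beta$ (using $\Div_x u_L=0$ and $\Div_r(\nabla\sigma_k^N r)=0$) and $|b_r\cdot r|\lesssim|r|^2$, so one integration by parts makes it contribute only terms $\le C\|W\|_H^2$. Discarding the non-positive dissipative terms gives $\tfrac{d}{dt}\|W\|_H^2\le C\|W\|_H^2$, and Grönwall with $W(0)=0$ forces $W\equiv0$.

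\textbf{Commutators and conclusion.} The $x$-mollification produces DiPerna--Lions commutators in the $x$-transport and in the $x$-dependent coefficient $(\nabla u_L)r$ of the $r$-transport; since $u_L\in C([0,T];C^2)$ and the $\sigma_k^N$ are trigonometric polynomials these commutators tend to $0$ in $H$, so the estimate passes to the limit. Having $V_1^N=V_2^N$ in $H$ for all $t$ and all $g\in G$, I would finish as follows. For $\varphi\in V$, pairing yields $\E[(f_1^N(t)-f_2^N(t),\varphi)\,e_g(t)]=0$; since $e_g$ is a martingale and $(f_i^N(t),\varphi)$ is $\overline{\mathcal{G}}_t$-measurable by hypothesis, this equals $\E[(f_1^N(t)-f_2^N(t),\varphi)\,e_g(T)]=0$ for every $g\in G$. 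As $\mathcal{D}=\{e_g(T):g\in G\}$ is dense in $\mathcal{H}=L^2(\Omega,\overline{\mathcal{G}}_T,P)$ and $(f_1^N(t)-f_2^N(t),\varphi)\in\mathcal{H}$, it vanishes almost surely. Ranging over a countable $V$-dense family of $\varphi$ and using $f_i^N\in C_w([0,T];H)$ upgrades this to $f_1^N(t)=f_2^N(t)$ in $H$ for all $t$, almost surely.

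\textbf{Main obstacle.} I expect the delicate point to be the deterministic energy estimate in the weighted space: the coefficients $\Lambda$, $y_n$ and $(\nabla u_L)r$ grow quadratically, respectively linearly, in $r$, whereas the solution class controls only $\nabla_r W$ in $H$. The crux is therefore the integration-by-parts trick converting the dangerous first-order cross terms into zeroth-order quantities absorbed by $\|W\|_H^2$, combined with the mollification needed to legitimize these manipulations despite the lack of $x$-regularity and the degeneracy of $\Lambda$ at $r=0$ (harmless because of the non-degenerate $\sigma^2\Delta_r$).
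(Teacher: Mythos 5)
Your plan is correct and follows essentially the same route as the paper: reduce to uniqueness of the deterministic mean equation for $V^N=\E[f^Ne_g]$ via point (4) of \autoref{Def-sol}, prove that uniqueness by mollifying in $x$, controlling the commutators and the $r$-growing coefficients through integrations by parts, and closing with Grönwall, then conclude by the density of $\mathcal{D}$ in $L^2(\Omega,\overline{\mathcal{G}}_T,P)$ together with the adaptedness hypothesis (your passage from $e_g(t)$ to $e_g(T)$ via the martingale property is a slightly cleaner rendering of the paper's final step). The only deviation is that the paper runs the energy estimate on the unweighted $L^2(\T^2\times\R^2)$ norm rather than on $\|\cdot\|_H$, which avoids the extra cross and truncation boundary terms created by differentiating the weight $(1+|r|^2)$ against the quadratically growing matrix $\Lambda(r)$ — terms your plan must (and, with the integration-by-parts trick you describe, can) absorb.
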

\begin{proof}
    See \autoref{Subsection-uniq-quasi-weak}.
\end{proof}
Concerning  the scaling limit   as  $N\to   +\infty$,  we  have    
\begin{theorem} \label{main-thm-2}	There  exists  a   new probability space,	denoted by  the same    way  (for simplicity)    $(\Omega,\mathcal{F},P)$,  $\overline{f}\in L^2_{w-*}(\Omega ;L^\infty([0, T];H))),  \nabla_r\overline{f}\in L^2(\Omega ;L^2([0, T];H)))   $    such    that  the following   convergence holds   (up to  a   sub-sequence):  $ f^N\rightarrow \overline{f}  \text{ in } C([0,T];U^\prime) \quad P-a.s.$  and
\begin{align*}
    f^N &\rightharpoonup \overline{f} \text{		in	}  L^2_{w-*}(\Omega ;L^\infty([0, T];H))),\quad
         \nabla_rf^N \rightharpoonup \nabla_r\overline{f} \text{		in	}  L^2(\Omega ;L^2([0, T];H)).\end{align*} Moreover   $\overline{f}$ is   the unique  solution    of  the following   problem:    	$P$-a.s. \begin{align}&\int_{\mathbb{T}^2}\int_{\mathbb{R}^2}\overline{f}(x,r,t)\phi(x)\psi(r) drdx-\int_{\mathbb{T}^2}\int_{\mathbb{R}^2}f_0(x,r)\phi(x)\psi(r) drdx\label{equ-final}\\&= \int_0^t\int_{\mathbb{T}^2}\int_{\mathbb{R}^2}\overline{f}(x,r,s)\left(u_L(x,s)\cdot \nabla_x \phi(x)\psi(r)+ (\nabla u_L(s,x)r-\dfrac{1}{\beta}r) \cdot\nabla_r\psi(r)\phi(x)\right) dr dxds\notag\\	&-	\sigma^2\int_0^t\int_{\mathbb{T}^2}\int_{\mathbb{R}^2}\nabla_r\overline{f}(x,r,s) \cdot\nabla_r\psi(r)\phi(x)  dr dxds-\dfrac{1}{2}\int_0^t\int_{\mathbb{T}^2}\int_{\mathbb{R}^2}A(r) \nabla_r\overline{f}(x,r,s))\cdot\nabla_r\psi(r)\phi(x) dr dxds,\notag\end{align}
for any $\phi  \in C^\infty(\mathbb{T}^2)$ and  $\psi  \in C^\infty_c(\mathbb{R}^2)$	and   $A(r)$  is  given   in  \autoref{lem-matrix-r}.\end{theorem}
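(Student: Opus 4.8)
The plan is to argue by compactness and identification of the limit, the deterministic character of the limit being extracted from the ``regularity in mean'' structure of \autoref{Def-sol} together with the Wiener uniqueness machinery. First I would establish tightness of the laws of $(f^N)_N$ in $C([0,T];U^\prime)$. The uniform bounds of \autoref{TH1} give, via the compact embedding of $H$ (and of the space controlling $\nabla_r f^N$) into $U^\prime$, the required spatial compactness; the needed equicontinuity in time follows from \eqref{Ito-FP} by estimating increments $f^N(t)-f^N(s)$ in a negative-order space, the drift and $\sigma^2\Delta_r$ contributions giving Lipschitz-in-time control and the stochastic integral giving H\"older control through Burkholder--Davis--Gundy applied to its quadratic variation, which is bounded uniformly in $N$. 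Prokhorov's theorem and the Skorokhod representation theorem then produce, on a new probability space, a subsequence converging $P$-a.s. in $C([0,T];U^\prime)$ to a limit $\overline{f}$; since the bounds of \autoref{TH1} are preserved, $\overline{f}$ inherits the stated weak-$*$ regularity and $f^N\rightharpoonup\overline{f}$, $\nabla_r f^N\rightharpoonup\nabla_r\overline{f}$ along the subsequence.

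Next I would pass to the limit term by term in the weak formulation of \autoref{Def-sol}(3) tested against $\phi\in C^\infty(\T^2)$ and $\psi\in C^\infty_c(\R^2)$. The transport, stretching--relaxation and $\sigma^2\Delta_r$ terms are linear and close under the weak convergence of $f^N$ and $\nabla_r f^N$. The term $\alpha_N\Delta_x f^N$ disappears because $\alpha_N\to 0$ by \eqref{alpha-N} while $(f^N)$ is bounded. For the corrector I would use \autoref{lem-matrix-r} to write $\sum_{k\in K}(\nabla\sigma_k^N r)\otimes(\nabla\sigma_k^N r)=A(r)+O(1/N)P(r)$: the compact support of $\psi$ in $r$ makes $A(r)\nabla_r\psi$ and $P(r)\nabla_r\psi$ bounded, so the $A$-part converges against the weak limit $\nabla_r\overline{f}$ and the remainder vanishes like $1/N$. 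This identifies every term of \eqref{equ-final} except the martingale, whose elimination is the heart of the argument.

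To remove the stochastic integral I would exploit \autoref{Def-sol}(4). Fix $n$ and $g\in G_n$ and set $V^N=\E[f^N e_g]$, which solves the deterministic equation of \autoref{Def-sol}(4) with coefficients $h_n=\sum_{k\in K_n} g_k\sigma_k^N$ and $y_n=\sum_{k\in K_n} g_k(\nabla\sigma_k^N r)$. The decisive observation is that these coefficients \emph{vanish identically once} $N>n$: the multipliers $g_k$ are supported on $\{|k|\le n\}$, whereas $\sigma_k^N$ is supported on $\{N\le|k|\le 2N\}$, so the two frequency ranges are disjoint. Hence for $N>n$ the function $V^N$ solves a deterministic equation \emph{decoupled from} $g$, with $g$-independent initial datum $f_0$ (since $e_g(0)=1$). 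Passing $N\to\infty$ exactly as in the previous paragraph, $V=\E[\overline{f}e_g]$ solves the deterministic limit equation \eqref{equ-final}, and this limit is independent of $g$. Choosing $g=0$ gives $V=\E[\overline{f}]$, so $\E[\overline{f}\,(e_g(T)-1)]=0$ for every $g$; the density of $\mathcal{D}$ in $\mathcal{H}=L^2(\Omega,\overline{\mathcal{G}}_T,P)$ then forces $\overline{f}$ to coincide with its expectation, i.e. to be deterministic. Consequently the martingale term is absent in the limit and $\overline{f}$ satisfies \eqref{equ-final}.

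It remains to prove uniqueness for the limit problem \eqref{equ-final}, which also closes the argument above and upgrades the convergence to the whole sequence. This is a linear degenerate parabolic equation in the weighted space $H=L^2_{r,2}$. I would take the difference $w$ of two solutions, regularize in $x$ by convolution with $\rho_\delta$ to legitimize testing the transport term against $w$, and derive a weighted energy identity. Since $A(r)=k_T(|r|^2 I+2r^\perp\otimes r^\perp)$ is nonnegative definite, the new diffusion contributes with the favorable sign; the terms coming from the quadratic growth of $A$ and from the polynomial weight $(1+|r|^2)$ are absorbed by the $\sigma^2\Delta_r$-dissipation and by Gr\"onwall's inequality after sending $\delta\to 0$. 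The main obstacle, conceptually, is the deterministic-limit step of the third paragraph, and, technically, this uniqueness estimate: the interplay between the unbounded (quadratically growing) coefficients $A(r)$, the polynomial weight, and the degeneracy of the diffusion is exactly what the weighted-space framework is designed to control, and the commutators between the weight and the $r$-derivatives must be tracked carefully rather than discarded.
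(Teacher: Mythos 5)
Your first, second and fourth paragraphs essentially reproduce the paper's argument: tightness in $C([0,T];U^\prime)$ via uniform bounds plus H\"older-in-time control of the increments (\autoref{Lemma-tight1}, \autoref{tight-2}), Skorokhod representation, term-by-term passage to the limit using $\alpha_N\to0$ and \autoref{lem-matrix-r}, and uniqueness of \eqref{equ-final} by regularization and a weighted energy estimate exploiting $A(r)\geq 0$ (\autoref{lemma-uniqunes-limit}). The problem is your third paragraph, which is where you place ``the heart of the argument.''

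There are two issues there. First, a genuine gap: your density argument concludes that $\overline{f}$ is deterministic from $\E[\overline{f}(e_g(T)-1)]=0$ for all $g$, but this only yields $\overline{f}=\E[\overline{f}]$ if $\overline{f}$ is measurable with respect to $\overline{\mathcal{G}}_T$, the completed $\sigma$-algebra generated by the driving Wiener processes. After the Skorokhod representation the limit $\overline{f}$ lives on a new probability space and there is no reason for it to be measurable with respect to the new Brownian motions; this is precisely why the paper must \emph{assume} $\overline{\mathcal{G}}_t$-adaptedness as a hypothesis in \autoref{TH2} rather than derive it. Without step (c) you only obtain that $\E[\overline{f}\,]$ solves \eqref{equ-final}, not $\overline{f}$ itself, so the $P$-a.s. statement of the theorem is not reached. (Your observation that $h_n=y_n=0$ for $N>n$ because the frequency supports of $g$ and of $\sigma_k^N$ are disjoint is correct, and the identification $\lim_N V^N=\E[\overline{f}\,\overline{e}_g]$ would also need an argument relating the exponential martingales before and after Skorokhod, but these are repairable.) Second, you miss the mechanism the paper actually uses, which is much more direct: by It\^o isometry and Bessel's inequality for the trigonometric system, the quadratic variation of the martingale term is bounded by $\sup_{k\in K}(\theta^N_k)^2$ and $\sup_{k\in K}|k|^2(\theta^N_k)^2$ times $\Vert f^N\Vert^2_{L^2(\Omega\times[0,T];H)}$, and both suprema tend to zero under the scaling $\theta^N_k=a/|k|^2$, $N\leq|k|\leq 2N$ (see \eqref{coefficient} and \autoref{lem-stoch-vanishes}). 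So the stochastic integral converges to zero in $L^2(\Omega)$ outright, no Wiener-chaos machinery needed; the whole point of this scaling regime is that the noise covariance $Q(0)$ and its first-derivative action on test functions vanish while only the second-order object $A(r)$ survives.
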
 
 \begin{proof}
     See    \autoref{section-scaling-PF}    for the existence proof  and \autoref{lemma-uniqunes-limit}  for the uniqueness.
 \end{proof}
 \begin{remark}
        By taking into    account the regularity  of  $\overline{f}$, \eqref{equ-final} can   be  written  as 	\begin{align}\label{Limit-FP-eqn}
\begin{cases}\partial_t\overline{f}(x,r,t)&+\Div_x(u_L(x,t)\overline{f}(x,r,t))+\Div_r((\nabla u_L(t,x)r-\dfrac{1}{\beta}r)\overline{f}(x,r,t))\\	&=	\sigma^2\Delta_r\overline{f}(x,r,t)+\dfrac{1}{2}\Div_r( A(r)\nabla_r \overline{f}(x,r,t))\\
f|_{t=0}&=f_0,
\end{cases}\end{align}
in $\mathcal{Y}^\prime$-sense,    where   $\mathcal{Y}=\{\varphi\in H:    \nabla_r\varphi\in H,  \nabla_x\varphi\in  L^2(\T^2\times\R^2)\}$.  Moreover,
    since  $\overline{f}\in L^\infty(0, T;H)$    and $\overline{f}  \in C([0,T];U^\prime)$, we  get $\overline{f}  \in C_w([0,T];H)$    $P$-a.s.$($    see       \cite[Lem. 1.4 p. 263]{Temam77}$)$.\\
        $\bullet$   Since   \eqref{Limit-FP-eqn}    is  Fokker-Planck equation  of  \eqref{FP_SDE-limit} then    $\overline{f}\geq   0$  and $\dfrac{d}{dt}\int_{\T^2\times   \R^2}   \overline{f}(t,x,r)dxdr=0$ if $f_0\geq 0$ and  $f_0\in L^1(\mathbb{T}^2\times \mathbb{R}^2).$
  \end{remark}
   
\begin{remark}\label{Rmq-SDE-FP}$($The  limit   FP,  Lagrangian  description  and macroscopic equation$)$ \\
   $\bullet$ From    \autoref{lem-matrix-r},   we  get
 for each $r\in \R^2$
   \begin{align*}
   A(r)=\frac{a^2\pi\log 2 }{8} \lvert r\rvert^2\frac{r\otimes r}{\lvert r\rvert^2}+\frac{
   2a^2\pi\log 2 }{8}\lvert r\rvert^2 \left(\frac{r^\perp\otimes r^\perp }{\lvert r\rvert^2}\right).
   \end{align*}
   Therefore $ A(r)=\mathcal{Q}(r)\mathcal{Q}(r)$, where 
   $
   \mathcal{Q}(r)=\frac{a\sqrt{\pi\log 2} }{2\sqrt{2}}\left( \dfrac{r\otimes r}{\lvert r\rvert}+\sqrt{2} \dfrac{r^\perp\otimes r^\perp }{\lvert r\rvert}\right).
   $
Notice    that  $\mathcal{Q}(r)$ is a symmetric, non negative matrix for each $r\in \R^2$ and the function $r\rightarrow  \mathcal{Q}(r)$ is Lipschitz with linear growth.\\
$\bullet$   Since for each $i\in \{1,2\},\ \sum_{j=1}^2 \partial_jA_{i,j}(r)=0$, if $\widetilde{W}$ and $\widetilde{\widetilde{W}}$     are   2D    independent Brownian motions, then the stochastic differential equation (SDE)   \begin{align}\label{FP_SDE-limit}
\begin{cases}
d\overline{X}_t&=u_L(t,\overline{X}_t)dt;\quad    \overline{X}_0=x\\
d\overline{R}_t&=(\nabla u_L(t,\overline{X}_t)\overline{R}_t-\dfrac{1}{\beta}\overline{R}_t)dt+\mathcal{Q}(\overline{R}_t)d\widetilde{\widetilde{W}}_t+\sqrt{2}\sigma    d\widetilde{W}_t;\quad   
\overline{R}_0=r,
\end{cases}
\end{align}
has \eqref{Limit-FP-eqn} as Fokker-Planck equation. \\
$\bullet$    Define  $\overline{\mathbf{T}}=\E_r(\overline{R}_t\otimes \overline{R}_t),$   a   standard    computations    gives that   $\overline{\mathbf{T}}$ satisfies
    \begin{align}\label{II}
\begin{cases}
&\partial_{t}\overline{\mathbf{T}}+u_L\cdot\nabla \overline{\mathbf{T}}   =(\nabla u_L) \overline{\mathbf{T}}+\overline{\mathbf{T}}(\nabla u_L)^t -\dfrac{2}{\beta}(\overline{\mathbf{T}}-kT\mathbf{I}) +\E A(\overline{R}_t)\\
&\quad \mathbf{T}|_{t=0} =\mathbf{T}_0,
\end{cases}
\end{align}
which   means   that    $\mathbf{T}^N$  solution    to  \eqref{macro-intro} converges   weakly  to  $\overline{\mathbf{T}}$ in  appropriate (weak)  sense and    the turbulent   velocity    generates the term    $\E A(\overline{R}_t)$, as  $N\to   +\infty$,  at  the macroscopic level.

Let us briefly interpret the previous results. The additional term $\mathcal{Q}(\overline{R}_t)d\widetilde{\widetilde{W}}_t$ in the stochastic equation contributes to a higher dispersion of the values of $\overline{R}_t$, increasing the variance; and not in a "Gaussian" way, since the diffusion matrix depends on $\overline{R}_t$ and its quadratic form evaluated in the $\overline{R}_t$ direction increases with $\overline{R}_t$. This is coherent with the power law result for the stationary solution illustrated in the next subsection.
Similarly, the additional term $\E_r A(\overline{R}_t)$ in the equation for the tensor $\E_r(\overline{R}_t\otimes \overline{R}_t)$ is non-negative definite, mostly positive definite, hence it contributes to a higher value of $\partial_{t}\overline{\mathbf{T}}$ hence to an increase of $\overline{R}_t$, coherent with the picture above: turbulent stretching statistically increases polymer length.
\end{remark} \begin{remark}\label{rmq-FENE}
 It is worth mentioning that we  consider the Hookean model. If we consider  the "Finitely Extensible Nonlinear Elastic (FENE)" model, namely replace $\frac{1}{\beta}R_t$ in  \eqref{Intro1} by 
 $\frac{1}{\beta}\frac{R_t}{1-\vert R_t\vert^2/b},$
    where $b$ is a  parameter denote the maximal  length of the polymer. Then, the analysis must be considered in a bounded domain with respect to $r$-variable with the appropriate boundary conditions, see e.g. \cite{Masmoudi2013}. It is interesting to extend our result to the FENE case. It seems that the scaling limit equation should have similar features  as the current one. However, since it requires significant changes,  this will be considered in a future work.
  \end{remark}
\subsection{Stationary solutions and  power-law tail}\label{Section-stationry-power-law}
The aim of this section is to discuss "formally" that  the limit PDE \eqref{equ-final}  exhibits interesting features of the power-law tail predicted in the physical literature, see for example \cite{Balk}. More rigorous results and discussions will be the subject of a future work.
 We consider the limit equation \eqref{equ-final}  in the simplest case $u_L=0$ and look for stationary solutions $\overline{f}(r)$, independent of $x$.   We  look for rotation-invariant and  non-negative solutions $\overline{f}\left(  r\right)    =g\left(  \left\vert r\right\vert \right)$, so that $\overline{f}$ should satisfy
\[
k_{T}\operatorname{div}_r\left( \dfrac{1}{2} \left(  3\left\vert r\right\vert ^{2}%
Id-2r\otimes r\right)  \nabla \overline{f}\left(  r\right)  +\frac{\sigma^{2}}{ k_{T}}\nabla
\overline{f}\left(  r\right)+ \frac{r}{\beta k_{T}}\overline{f}\left(
r\right) \right)   =0.
\]

Then, since $\nabla \overline{f}\left(  r\right)=\frac{r}{\vert r\vert}g^\prime\left(
 \left\vert r\right\vert \right)$ and $k_{T}>0$ we get

\begin{align}\label{eqn-station}
 \operatorname{div}_r\left( (\dfrac{\left\vert r\right\vert ^{2}}{2} +\frac{\sigma^{2}}{ k_{T}})
\nabla \overline{f}\left(  r\right)  + \frac{r}{\beta k_{T}}\overline{f}\left(
r\right) \right)   =0.   
\end{align}

Set $\frac{1}{\beta k_{T}}=\alpha$ and $p(r)=\frac{\left\vert r\right\vert ^{2}}{2} +\frac{\sigma^{2}}{ k_{T}}
.$ Then we can write   \eqref{eqn-station} as follows
\begin{align}\label{eqn-sta-den}
 \operatorname{div}_r\left( p^{1-\alpha}(r)\nabla [ p^{\alpha}(r)\overline{f}\left(  r\right)] \right)   =0.  
\end{align}
If $\overline{f}$ is smooth function, we can multiply \eqref{eqn-sta-den} by $p^{\alpha}(r)\overline{f}$ and integrate over $\mathbb{R}^2$ 
to get 
\begin{align*}
 \int_{\mathbb{R}^2}p^{1-\alpha}(r)\vert\nabla ( p^{\alpha}(r)\overline{f}\left(  r\right))\vert^2 dr=0.  
\end{align*}
Since $\dfrac{\sigma^{2}}{ k_{T}}>0$, the last equality implies the existence of  $C\in \mathbb{R}$ such that 
$p^{\alpha}(r)\overline{f}\left(  r\right)=C.$
Thus, since we are looking for positive nontrivial solution, we obtain 
\begin{align*}
\overline{f}\left(  \left\vert r\right\vert\right)=Cp^{-\alpha}(r)=C\left(\dfrac{\left\vert r\right\vert ^{2}}{2} +\dfrac{\sigma^{2}}{ k_{T}}\right)^{-\frac{1}{\beta k_{T}}}, C>0.
\end{align*}
In relation to physical literature, e.g. \cite[eqn.(11)]{Balk}. Let us just write the last formula only in $\vert r\vert$, namely
\begin{align}\label{formula-comment}
    \overline{f}\left(  \left\vert r\right\vert\right) \sim\left\vert r\right\vert^{-\frac{2}{\beta k_{T}}}= \left\vert r\right\vert^{-(\frac{2}{\beta k_{T}}-1)-1}=\left\vert r\right\vert^{-\gamma-1},\quad \gamma=\frac{2}{\beta k_{T}}-1.
\end{align}
Notice that  the power $\gamma$  depends on the interpretable constants, the relaxation time  of the polymer $\beta$ and the number $k_{T}$ proportional to the square-intensity of the turbulent eddies. Let us make some comments about the  formula \eqref{formula-comment}.
\begin{itemize}
\item \underline{The case $\gamma=\frac{2}{\beta k_{T}}-1<0 \Leftrightarrow 2<\beta k_{T}:$}  $\overline{f}$ is not integrable  and larger values of $\left\vert r\right\vert$ are more probable, which means that  the most of polymers have large size and the polymers are in \textit{stretched state}. This state holds either when   the relaxation time $\beta$ is large or  the intensity $k_{T}$ of the turbulent fluid is large.\\
\item  \underline{The case $\gamma=\frac{2}{\beta k_{T}}-1>0 \Leftrightarrow \beta k_{T}<2:$}  $\overline{f}$ is integrable and thus the normalized  $\overline{f}$  is PDF,  in other words,  most polymers have an equilibrium size and the Polymers are in \textit{coil state}.  the last expression of $\overline{f}$ confirms that  when either the relaxation time is small (namely the polymer is fast in recovering its equilibrium position) or  the intensity $k_{T}$ of the turbulent fluid is small then the polymers tends to be in \textit{coil state}.\\
\item \underline{The case $\gamma=\frac{2}{\beta k_{T}}-1=0 \Leftrightarrow \beta k_{T}=2:$}  The power $\gamma$ changes the sign and as discussed in  \cite{Balk}, this can be interpreted as the criterion for the coil-stretch transition.\\
\item As mentioned in Introduction,  we recall that the power tail is related to Lyapunov exponent $\lambda$ of the turbulent flow, which is not trivial to compute in general but one would like also to predict it. From \cite{Balk}, one   extracts the following link between $\lambda$ and $\gamma$:
\begin{enumerate}
    \item[i.]  If $\lambda$ increases then $\gamma$ decreases.$\qquad\qquad$  ii.  $\gamma$ changes its sign at $\lambda=\frac{1}{\beta}$.
    \item[iii.]  As $\lambda$ tends to zero, $\gamma$ tends to infinity.\\
\end{enumerate}
From (ii), we get that $\gamma$ changes the sign at $\frac{k_T}{2}=\frac{1}{\beta}=\lambda$. Moreover, as $\lambda$ is related to the turbulent flow and by combining (i) and (iii), we can  say  that the Lyapunov exponent $\lambda$ is our model of turbulent flow is $\lambda=\frac{K_T}{2}.$ However, we do not prove   such  equality "just heuristic!". Finally, in the case of  the point (iii), we get strong suppression of the tail, which   is quite natural since in a weak flow the molecules are only weakly stretched.
\end{itemize}

\section{About proofs and mathematical challenges}\label{section-outlines}
For the convenience of the reader, let us summarize the outline of the proof and the main technical difficulties we face in proving the main results. The details will  be  provided    in     \autoref{Section-exitence}  and \autoref{Section-diffusion}.   Note    that    \eqref{Ito-FP}  is  stochastic  Fokker  Planck,    has a hyperbolic
nature  with    respect to  the space   variable    $x$, it cannot regularize the initial condition.    Moreover,  its    well-posedness   is  not a   standard    result   and we  need    to  construct   a solution  in  an  appropriate sense,   see \autoref{Def-sol}.\\

The first   step    concerns    the construction    of  weak    solutions   (see    \autoref{Def-sol},   point   (3)) by    using   Galerkin    approximation scheme.  We proceed as  follows: we   construct   an  orthonormal basis  of $H$  (the  weight used to  give    sense to     the    terms   including  the coefficient   with $r$-variable,  $r\in   \R^2$).  It's    worth   to  mention we  consider    a general setting   to  prove   the existence   of  solution    and up  to  a cosmetic    modifications,   the same    result  follows by  using   other   weights e.g.    $(1+\vert   r\vert^2)^m,    m>1$ which corresponds to     $L^2_{r,m}(\T^2\times   \R^2)\hookrightarrow L^1(\T^2\times   \R^2)$ if  $  m>2$.    The first   problem  concerns    the proof   of  some   a   priori  estimates,  for that    we  add the extra   term    $\mathcal{Y}^m$ given   by \eqref{extra-ope-projection}     to  control    the term   coming  from    the  interaction between  the two part    of  the stochastic  integral,  in order   to  get the desired estimates.  Then, we    may pass    to  the limit   as  $m\to   +\infty$  and  construct   solution    to   our problem, after    showing    that    $\mathcal{Y}^m$ vanishes    as  $m\to   +\infty$ (we  exploit  that    $Q$ is  space-homogeneous   and has mirror  symmetry  property).   Consequently,    we  construct   weak    solution,   see \autoref{TH1}. \\

After   that,   we  face    the problem of  uniqueness  due to   lack    of  regularity  with    respect to  the space   variable    $x$ and the presence    of  noise,  which   motivates    the introduction    of  the    notion  \textit{"quasi-regular    weak    solution", }  which is characterized by the      point   (4) of  \autoref{Def-sol}   to  show uniqueness  in  this    subclass    of  solution,  see \autoref{TH2}   ( we  take    the advantage   of  the linearity   of  \eqref{Ito-FP}).  On   the other   hand,     we    should  use commutators estimates   and techniques  associated  with   hyperbolic   equations to prove in the beginning the uniqueness of solution to   the equation satisfied by some appropriate mean (\autoref{Def-sol},   point   $(4)$)  and we combine that with some density arguments in the $L^2$-space of  random  variables   to  deduce  the well-posedness   in  the sense   of  \autoref{Def-sol}. We   recall  that    this notion of uniqueness is weaker than the classical notion of    pathwise uniqueness,    see \cite[Rmk.    7]{FlaOli2018}.   Another key point   concerns    the uniform estimate    with    respect to  $N$,    which   follows directly    from    the construction    of  the solution    since  It\^o   formula  is    classical        at  the level   of  Galerkin    approximation    in   contrast with    infinite    dimension   where the solution    $f^N$ is    not smooth  and one  needs   to   be  careful    in  the application of  It\^o   formula.\\      

Finally, after obtaining some  estimates regarding    to $N$,    we  can pass    to  the limit   as  $N\to   +\infty$  (in weak    sense)  by  considering  the new regime when the noise covariance goes to zero but a suitable covariance built  on derivatives of the noise converges to a non zero limit  and we  obtain  a    degenerate  PDE with    respect to  the variable    $r$,  see \autoref{main-thm-2}   where    the stationary  solution    of  the limit   equation    \eqref{equ-final}    has a   power-law,   with    an  explicit    power   depending   on  the friction    parameter   $\beta$ and the number  $k_T$  (  related to  the turbulent   kinetic energy),  see \autoref{Section-stationry-power-law}.

\subsubsection{Formal computations    and estimates}
Let us  present some    formal  computations    related to   \eqref{Ito-FP}. Namely, 
we  compute   $\Vert  f^N\Vert^2$,    by  applying    It\^o   formula (formally), we  get
    
    \begin{align*}\displaystyle&\Vert  f^N(t)\Vert^2-\Vert  f_0\Vert^2=2\int_0^t(f^N(s),u_L\cdot \nabla_x f^N(s)+ (\nabla_x u_Lr-\dfrac{1}{\beta}r) \cdot\nabla_rf^N(s) )ds\notag\\	&-	2\sigma^2\displaystyle\int_0^t \Vert\nabla_rf^N(s)\Vert^2ds-2\alpha_N\int_0^t\Vert\nabla_xf^N(s)\Vert^2ds-\int_0^t (A_k^N(x,r)\nabla_rf^N(s),\nabla_rf^N(s)) ds\notag\\&+2\sum_{k\in K}\int_0^t[(f^N(s),\sigma_k^N.\nabla_xf^N(s))+(f^N(s),(\nabla \sigma_k^Nr).\nabla_rf^N(s))]dW^k(s)\notag
\\&+\sum_{k\in K}\int_0^t\Vert  \sigma_k^N.\nabla_xf^N(s)\Vert^2+\Vert    (\nabla\sigma_k^Nr).\nabla_rf^N(s)\Vert^2ds+2\sum_{k\in K}\int_0^t(\sigma_k^N.\nabla_xf^N(s),(\nabla\sigma_k^Nr).\nabla_rf^N(s))ds.
  \notag\end{align*}

  Since   $\Div_x(u_L)=\Div_r(\nabla_x u_Lr)=\Div_x(\sigma_k^N)=\Div_r(\nabla \sigma_k^Nr)=0,$    one has
  \begin{align*}
  (f^N,u_L\cdot \nabla_x f^N)=(f^N,\sigma_k^N.\nabla_xf^N)=0,\quad    (f^N,(\nabla_x u_Lr)\cdot\nabla_rf^N)=(f^N,(\nabla \sigma_k^Nr).\nabla_rf^N)=0.
  \end{align*}
  On    the other   hand,  since \begin{align*}\sum_{k\in K}\int_0^t\Vert  \sigma_k^N.\nabla_xf^N(s)\Vert^2ds&=\sum_{k\in K}\int_0^t(  \sigma_k^N.\nabla_xf^N(s),\sigma_k^N.\nabla_xf^N(s))ds\\&=\int_0^t(  \sum_{k\in K}\sigma_k^N(x)\otimes\sigma_k^N(x)\nabla_xf^N(s),\nabla_xf^N(s))ds\\&=\sum_{k\in K_{++}}(\theta_{\left\vert k\right\vert }^N)^2\int_0^t( \nabla_xf^N(s),\nabla_xf^N(s))ds=2\alpha_N\int_0^t\Vert\nabla_xf^N(s)\Vert^2ds.\end{align*}
 Hence
 $ -2\displaystyle\alpha_N\int_0^t\Vert\nabla_xf_m(s)\Vert^2ds +\sum_{k\in K}\int_0^t\Vert  \sigma_k^N.\nabla_xf_m(s)\Vert^2ds=0.$   Similarly,  we  have
   \begin{align*}
  \sum_{k\in K}\int_0^t\Vert   (\nabla\sigma_k^Nr).\nabla_rf^N(s)\Vert^2ds=\int_0^t (A_k^N\nabla_rf^N(s),\nabla_rf^N(s)) ds,
  \end{align*}
  where $A^N_k$ is  given   by 
  \begin{align}
       A_k^N&:=\sum_{k\in K}\left((\nabla \sigma_k^Nr) \otimes (\nabla \sigma_k^Nr)\right);\label{matrix-A}
  \end{align}   thus
  $-\displaystyle\int_0^t (A_k^N\nabla_rf^N(s),\nabla_rf^N(s)) ds+\sum_{k\in K}\int_0^t\Vert    (\nabla\sigma_k^Nr).\nabla_rf^N(s)\Vert^2ds= 0.$\\
  Summing   up, we  get
    \begin{align*}&\displaystyle\Vert  f^N(t)\Vert^2+2\sigma^2\displaystyle\int_0^t \Vert\nabla_rf^N(s)\Vert^2ds-\Vert  f_0\Vert^2 \\\leq&-\dfrac{2}{\beta}\int_0^t (f^N(s),r\cdot\nabla_rf^N(s) )ds+2\sum_{k\in K}\int_0^t(\sigma_k^N.\nabla_xf^N(s),(\nabla\sigma_k^Nr).\nabla_rf^N(s))ds.\end{align*}
On  the other   hand,   notice    that
\begin{align}\label{Cancelation-noise-covariance}
\sum_{k\in K}(\sigma_k^N.\nabla_xf^N(s),(\nabla\sigma_k^Nr).\nabla_rf^N(s))=-(\displaystyle\sum_{k\in K}	(\nabla \sigma_k^Nr).\nabla_r(\sigma_k^N.\nabla_xf^N(s)),f^N(s))=0,
\end{align}
        since   the covariance  matrix  $Q$   is space-homogeneous   and has mirror symmetry property,   see \autoref{lem1}.    By  using   \eqref{Cancelation-noise-covariance},   we  obtain

    \begin{align*}&\displaystyle\Vert  f^N(t)\Vert^2+2\sigma^2\displaystyle\int_0^t \Vert\nabla_rf^N(s)\Vert^2ds\leq    \Vert  f_0\Vert^2 -\dfrac{2}{\beta}\int_0^t (f^N(s),r\cdot\nabla_rf^N(s) )ds.\end{align*}
    Let us  compute the last    term    of  the RHS.    We  have
     \begin{align*} (f^N,r\cdot\nabla_rf^N )=\sum_{i=1}^2 \int_{\T^2\times  \R^2}\hspace{-0.5cm}f^N(x,r)r_i\cdot\partial_{r_i}f^N(x,r) drdx=\dfrac{1}{2}\sum_{i=1}^2 \int_{\T^2\times  \R^2}\hspace{-0.5cm}r_i\partial_{r_i}(f^N)^2(x,r) drdx=-\Vert  f^N\Vert^2.\end{align*}
 Consequently,  $\displaystyle\Vert  f^N(t)\Vert^2+2\sigma^2\displaystyle\int_0^t \Vert\nabla_rf^N(s)\Vert^2ds\leq    \Vert  f_0\Vert^2 +\dfrac{2}{\beta}\int_0^t \Vert  f^N(s)\Vert^2ds.$
     Grönwall  lemma   ensures the following:   P-a.s.
        \begin{align}\label{Estimate-formal}
        \forall t\geq   0:  \quad   \displaystyle\Vert  f^N(t)\Vert^2+2\sigma^2\displaystyle\int_0^t \Vert\nabla_rf^N(s)\Vert^2ds\leq    \Vert  f_0\Vert^2e^{\frac{2}{\beta}t}.\end{align}
            Note    that    the last    inequality  \eqref{Estimate-formal} is  sufficient, \textit{a   priori, }     to  construct  a   weak    solution       and also    ensure  the uniqueness, since   \eqref{Ito-FP}  is  a    linear  equation. Unfortunately,  \eqref{Estimate-formal} is not rigorous. Indeed, if  $f^N$   satisfies   \eqref{Estimate-formal} only    then    we  are not able    to  apply   directly    It\^o   formula to  \eqref{Ito-FP}  and we  need    either  to  approximate or  regularize \eqref{Ito-FP}  in  appropriate  way and prove the existence   of  $f^N$   and \eqref{Estimate-formal},    after   passing to  the limit   with    respect to  the approximation   or  regularization  parameters.  It turns   out that    it   is  not sufficient    to  consider  $f_0    \in L^2(\T\times\R^2)$ and  we  need    more    regular initial data,   namely  $f_0\in H$  to  construct   a   weak    solution,   see \autoref{Def-sol}.  Uniqueness is more delicate,    since   \eqref{Ito-FP}  has a hyperbolic character with respect to  $x$-variable    and we will    prove   uniqueness   in  particular  class   of  solution,   see \autoref{TH2}. For the convenience of the reader, let us explain the arguments that failed when we tried to obtain  \eqref{Estimate-formal} in  rigorous    way.    
\subsubsection{A  priori  estimates    via  Galerkin    approximation}
The first   step    concerns    the projection  of  \eqref{Ito-FP}  onto    a   finite  dimensional space,    see \eqref{approx-linear*SDE}.  Then,   we    apply   finite  dimensional It\^o   formula which   leads   to  the presence    of the  term
\begin{align}\label{new-term}
  \displaystyle\sum_{k\in K}(\sigma_k^N\cdot\nabla_xP_m[(\nabla\sigma_k^Nr).\nabla_rf_m],w_j))_H    \quad   (f_m=P_mf^N)
\end{align} instead of     $\displaystyle\sum_{k\in K}(\sigma_k^N.\nabla_xf^N(s),(\nabla\sigma_k^Nr).\nabla_rf^N(s))$   and   \eqref{Cancelation-noise-covariance}    is  not valid   anymore. We remedy this issue by subtraction this term \eqref{new-term} at the level of Galerkin approximation to get an appropriate estimate,   see   \autoref{Lemma9}  and we  need    to  show    that    $\displaystyle\lim_{m}(\sigma_k^N\cdot\nabla_xP_m[(\nabla\sigma_k^Nr).\nabla_rf_m],w_j))_H=0$  to  recover the original    problem \eqref{Ito-FP}. It's    worth   to  mention that    since   $r\in   \R^2$   and it  acts    as  coefficient  then    we  need    to  use initial data    $f_0\in H$   to  prove  that    the extra   term   \eqref{new-term} vanishes as  $m\to   +\infty$,  see  \eqref{limit-covarnaince-0}  and one  obtains  the desired estimates    of     \autoref{Lemma9}. We  refer   to  \autoref{Section-exitence}  for more    details.
\subsubsection{Uniqueness   via commutators and quasi-regular  weak  solution}
Once    the existence of    weak    solution    satisfying the  points  (1),(2) and (3) of  \autoref{Def-sol}  is  established, one seeks   to  prove   the uniqueness   of  this    class   of  solutions.  Since   \eqref{Ito-FP}  has an  hyperbolic  nature, one uses    the commutators estimates   in  order   to  prove   pathwise    uniqueness. Let us  explain why using   this    technique   does          not   give    uniqueness  of  this    class   of  solution,   for that    we  mention only    the terms   that    cause   problems. Let $\delta>0$   and $\varphi\in C^\infty_c(\T^2\times  \R^2)$, if  we  denote  $X=(x,r)\in \T^2\times  \R^2$, $\rho_{\delta}(X)=\rho_\delta(x)$  and  $\varphi_{\delta}:=\rho_{\delta}*\varphi$     then
   we  need    to    pass    to  the limit  as $\delta\to  0$   in  the   equality
   \begin{align*}&\dfrac{1}{2} \E\Vert [f^N(t)]_{\delta}\Vert^2+\E\int_0^t\langle [u_L(s)\cdot \nabla_xf^N(s)]_{\delta}+[\Div_r(\nabla u_L(s)r-\dfrac{1}{\beta}r)f^N(s)]_{\delta}, [f^N(s)]_{\delta}\rangle   ds\\	=&\E\int_0^t\langle \sigma^2[\Delta_rf^N(s)]_{\delta}+\alpha_N[\Delta_xf^N(s)]_{\delta}, [f^N(s)]_{\delta}\rangle   ds\\ &+ \dfrac{1}{2}\E\int_0^t\langle [\Div_r\sum_{k\in K}\left((\nabla \sigma_k^Nr) \otimes (\nabla \sigma_k^Nr)\right) \nabla_rf^N(s))]_{\delta},[f^N(s)]_{\delta}\rangle    ds\\&+\dfrac{1}{2}\sum_{k\in K}\E\int_0^t\Vert[\sigma_k^N.\nabla_xf^N(s)]_{\delta}+[(\nabla \sigma_k^Nr).\nabla_rf^N(s)]_{\delta}\Vert^2ds.\end{align*}
   We   can estimate    all the terms   in  convenient  way except  the terms
   \begin{align}
    &\E\int_0^t\langle \alpha_N[\Delta_xf^N(s)]_{\delta}, [f^N(s)]_{\delta}\rangle   ds+\dfrac{1}{2}\sum_{k\in K}\E\int_0^t\Vert[\sigma_k^N.\nabla_xf^N(s)]_{\delta}\Vert^2ds\label{Problem-terms}\\ &+\sum_{k\in K}\E\int_0^t(\sigma_k^N.\nabla_xf^N(s)]_{\delta},[(\nabla \sigma_k^Nr).\nabla_rf^N(s)]_{\delta})ds.\notag
\end{align}
The last    two terms   come    from    the term
$\dfrac{1}{2}\displaystyle\sum_{k\in K}\E\int_0^t\Vert[\sigma_k^N.\nabla_xf^N(s)]_{\delta}+[(\nabla \sigma_k^Nr).\nabla_rf^N(s)]_{\delta}\Vert^2ds.$
A  \textit{ priori,} we    expect   that    the sum of  the first   two terms    in  \eqref{Problem-terms}   vanishes    as    $\delta\to  0$    (balance    of  energy  of  It\^o   Stratonovich    corrector)  but in  order   to  get that,   we  need  more  regularity  of  $f^N,   $   namely  $\nabla_xf^N\in L^2(\T^2\times\R^2)$,   which   is  not our case. Concerning    the last    term    in  \eqref{Problem-terms},   it is expected to disappear due to the space  homogeneity  and the mirror   symmetry   of  the covariance  $Q$ but also    it  requires      $\nabla_xf^N\in L^2(\T^2\times\R^2)$   as well.  Consequently,  we  take    the advantage   of  the linearity   of  the equation    \eqref{Ito-FP}  and we  prove   uniqueness  in  weaker  sense,  see \autoref{TH2}.

\section{Existence   and uniqueness  of  solutions to   \eqref{Ito-FP}} \label{Section-exitence}
Our aim is  this    section is  to  prove   the existence   and uniqueness  of  solution    to  \eqref{Ito-FP}  in  the sense   of     \autoref{Def-sol},   namely  \autoref{TH1}   and \autoref{TH2}. We divide the proof into several steps.   First,  we  introduce   the Galerkin    approximation.  Then,   we  prove   some      estimates    in  the appropriate spaces. Next,    we  show   the existence   of  analytically  weak    solution    to  \eqref{Ito-FP}.    Finally,    we prove the uniqueness   of  the class   of  quasi-regular    weak    solutions    after   showing the existence   and uniqueness  of  the solution    to  an  appropriate mean    equation    associated  with    \eqref{Ito-FP}, see \autoref{prop-exisence-mean}    and \autoref{lem-uniq-Vn}.

\subsubsection{Galerkin    basis   and approximation}
  We need to construct an orthonormal basis of $H$ such that all the terms in our approximation scheme are meaningful.   For   that,    note   that    $V\underset{cont.}{\hookrightarrow}	H.$    On  the other   hand,   $(V,(\cdot,\cdot)_V)$ is   a separable Hilbert space. By  using  \cite[Lem. C.1.]{Brez2013}, there   exists  a   Hilbert space   $(U , (\cdot|\cdot)_U)$ such that $U\hookrightarrow V$ , $U$ is dense in $V$ and the embedding $U\hookrightarrow V$ is compact.    
		Thus	the embedding    $U\hookrightarrow	H$ is    also    compact and 		we		can   construct	an	orthonormal	basis	for	$H$	by	using	the	eigenvectors	of	the	compact	embeeding	operator.	More	precisely,	
	there exists an orthonormal basis
	$\{w_i\}_{i\in	\mathbb{N}}$	of	$H$	such	that	$w_i\in	U$	and	
		 satisfies
	\begin{align}\label{basis1.2}
	(v,w_i)_{U}=\lambda_i(v,w_i)_H, \quad \forall v \in U, \quad i \in \mathbb{N},
	\end{align}
	where the sequence $\{\lambda_i\}_{i\in\mathbb{N}}$ of the corresponding eigenvalues fulfils the  properties: $\lambda_i >0, \forall i\in \mathbb{N}.$  Note that $\{\widetilde{w}_i=\frac{1}{\sqrt{\lambda_i}}w_i\}$ is an orthonormal basis for $U$,    see     \cite[Subsect.  2.3]{Brez2013} for similar argument of construction.	
 Now,	let $m\in   \mathbb{N}^*$ and
		denote	by	$H_m=\text{span}\{w_1,\cdots,w_m\}$	and	the		operator	$P_m$	defined from	$U^\prime$	to	$H_m$	defined	by
	$P_m:U^\prime\to	H_m;\quad	u\mapsto	P_mu=\displaystyle\sum_{i=1}^m\langle	u,w_i\rangle_{U^\prime,U}	w_i.	$
	In	particular,	the	restriction	of	$P_m$	to	$H$,	denoted	by	the	same	way,	is	the	$(\cdot,\cdot)$-orthogonal	projection	from	$H$	to	$H_m$	and	given	by
	\begin{align}\label{def-Projection}
	P_m:H\to	H_m;\quad	u\mapsto	P_mu=\displaystyle\sum_{i=1}^m 
(	u,w_i)_H	w_i.		\end{align}
	
	We 	notice	that	$\Vert	P_mu\Vert_H\leq\Vert	u\Vert_H$, $\forall u\in H$,
	then 	$\Vert	P_m\Vert_{L(H,H)}\leq	1$.
	\begin{remark}\label{rmq-proj-U}
It	is	worth	to	mention	that	the	restriction	of	$P_m$	to	$U$	is	also	an	orthogonal	projection,	thanks	to	\eqref{basis1.2}	and	thus	$\Vert	P_m\Vert_{L(U,U)}\leq	1$.
	\end{remark}
We	have	the	following	continuous	embedding
$	U\hookrightarrow V	\hookrightarrow H\hookrightarrow L^2(\T^2\times	\R^2).$
Since	$U$	is	dense	subset	of		$L^2(\T^2\times	\R^2)$,	we	can	consider	the	following	compact	Lions-Gelfand	triple,	namely
\begin{align*}
	U\underset{dense}{\hookrightarrow}	L^2(\T^2\times	\R^2)\equiv		L^2(\T^2\times	\R^2)\hookrightarrow	U^\prime.
\end{align*}
To simplify the notation, the duality between	$U$	and	$U^\prime$	will	be	denoted	$\langle	\cdot,\cdot\rangle$	instead	of		$\langle	\cdot,\cdot\rangle_{U^\prime,U}$.	Thus,	we	have	the	following	equality
\begin{align}\label{Lions-Gelfand-triple}
	\langle	f,u\rangle=(f,u),	\quad	\forall	f\in	L^2(\T^2\times	\R^2),	\forall	u\in	U.
\end{align} 
 We use   Faedo-Galerkin method,      we introduce the approximation $f_m(t)=\displaystyle\sum_{j=1}^mg_{mj}(t)w_j$ 
        and set $f_m(0)=P_mf_0\in H_m$.		We	consider	the	following	finite	dimensional	SDE
        
\begin{align}&(f_m(t),w_j)_H-(f_{0},w_j)_H-\int_0^t(f_m(s),u_L\cdot \nabla_x w_j)_Hds-\int_0^t(f_m(s),\mathcal{Y}^mw_j)_Hds; \quad 1\leq j \leq m\label{approx-linear*SDE}\\=&-\int_0^t((\nabla_x u_Lr)\cdot\nabla_rf_m(s),w_j)_H+ \dfrac{2}{\beta}(f_m(s) ,w_j )_H+\dfrac{1}{\beta}(r\cdot\nabla_rf_m(s) ,w_j )_Hds\notag\\&-	\int_0^t[\sigma^2 (\nabla_rf_m(s),\nabla_rw_j)_H+2\sigma^2 (\nabla_rf_m(s),rw_j)+\alpha_N(\nabla_xf_m(s),\nabla_xw_j)_H]ds\notag\\
&- \dfrac{1}{2}\int_0^t (A_k^N\nabla_rf_m(s),\nabla_rw_j)_H ds-\int_0^t (A_k^N\nabla_rf_m(s),rw_j) ds,\notag\\&+\sum_{k\in K}\int_0^t((f_m(s),\sigma_k^N.\nabla_xw_j)_H-((\nabla \sigma_k^Nr)\cdot\nabla_rf_m(s),w_j)_H)dW^k(s),
\notag\end{align}
 where  
 \begin{align}
 \mathcal{Y}^m\phi&:=\sum_{k\in K}P_m\left[(\nabla\sigma_k^Nr).\nabla_r(P_m(\sigma_k^N.\nabla_x\phi))+(2\dfrac{(\nabla\sigma_k^Nr)\cdot	r}{1+\vert	r\vert^2}P_m(\sigma_k^N.\nabla_x\phi)\right], \forall\phi\in U.\label{extra-ope-projection}
 \end{align}

Note	that	\eqref{approx-linear*SDE}   is   a   linear system  of  SDE, by  classical result    (see    e.g.    \cite[Chapter   V]{Oksendal2013}),   we  get the existence and uniqueness of $\mathcal{F}_t$-adapted   solution    $f_m\in C([0,T],L^2(\Omega;H_m)).$
\begin{remark}\label{RMQ-proj}	Note	that
	\begin{align*}
	(f_m,\mathcal{Y}^mw_j)_H&=\sum_{k\in K}(f_m,P_m\left[(\nabla\sigma_k^Nr).\nabla_r(P_m(\sigma_k^N.\nabla_xw_j))+2\dfrac{(\nabla\sigma_k^Nr)\cdot	r}{1+\vert	r\vert^2}P_m(\sigma_k^N.\nabla_xw_j)\right])_H\\
	&=\sum_{k\in K}(f_m,(\nabla\sigma_k^Nr).\nabla_r(P_m(\sigma_k^N.\nabla_xw_j)))_H+2(f_m,\dfrac{(\nabla\sigma_k^Nr)\cdot	r}{1+\vert	r\vert^2}P_m(\sigma_k^N.\nabla_xw_j))_H\\
	&=\sum_{k\in K}(\sigma_k^N\cdot\nabla_xP_m[(\nabla\sigma_k^Nr).\nabla_rf_m],w_j))_H.
	\end{align*}
\end{remark}

\subsection{A   priori  estimates} \label{Subsection-estimate-fm}
    We  apply   the  finite  dimensional It\^o formula to  the process $f_m$    to  get
\begin{align}\label{Ito-form-1-}&\displaystyle\Vert  f_m(t)\Vert^2_H-\Vert  P_mf_0\Vert_H^2 =2\int_0^t(f_m(s),u_L\cdot \nabla_x f_m(s))_Hds\\&-2\int_0^t((\nabla_x u_Lr)\cdot\nabla_rf_m(s),f_m(s))_H+ \dfrac{2}{\beta}\Vert  f_m(s)\Vert^2_H+\dfrac{1}{\beta}(r\cdot\nabla_rf_m(s) ,f_m(s) )_Hds\notag\\&\hspace*{-1.5cm}+2\int_0^t(f_m(s),\mathcal{Y}^mf_m)_Hds-	2\int_0^t[\sigma^2 \Vert\nabla_rf_m(s)\Vert_H^2+2\sigma^2 (\nabla_rf_m(s),rf_m(s))+\alpha_N\Vert\nabla_xf_m(s)\Vert_H^2]ds\notag\\
&-\int_0^t (A_k^N\nabla_rf_m(s),\nabla_rf_m(s))_H ds-2\int_0^t (A_k^N\nabla_rf_m(s),rf_m(s)) ds\notag\\&+2\sum_{k\in K}\int_0^t((f_m(s),\sigma_k^N.\nabla_xf_m(s))_H-((\nabla \sigma_k^Nr)\cdot\nabla_rf_m(s),f_m(s))_H)dW^k(s)
\notag\\&+\sum_{k\in K}\int_0^t\Vert  P_m\sigma_k^N.\nabla_xf_m(s)\Vert_H^2+\Vert   P_m (\nabla\sigma_k^Nr).\nabla_rf_m(s)\Vert^2_Hds\notag
\\&+2\sum_{j=1}^m\sum_{k\in K}\int_0^t(\sigma_k^N.\nabla_xf_m(s),e_j)_H((\nabla\sigma_k^Nr).\nabla_rf_m(s),e_j)_Hds.\notag
\end{align}
  Since   $\Div_x(u_L)=\Div_x(\sigma_k^N)=0,$    one has
 $
  (f_m,u_L\cdot \nabla_x f_m)_H=(f_m,\sigma_k^N.\nabla_xf_m)_H=0.
  $
  On    the other   hand,  note	that \begin{align*}\sum_{k\in K}\int_0^t\Vert  \sigma_k^N.\nabla_xf_m(s)\Vert^2_Hds=\sum_{k\in K_{++}}(\theta_{\left\vert k\right\vert }^N)^2\int_0^t( \nabla_xf_m(s),\nabla_xf_m(s))_Hds=2\alpha_N\int_0^t\Vert\nabla_xf_m(s)\Vert^2_Hds,\end{align*}
  where we  used    that    $(  \sigma_k^N.\nabla_xf_m,\sigma_k^N.\nabla_xf_m)_H=(  \sigma_k^N\otimes\sigma_k^N\nabla_xf_m,\nabla_xf_m)_H.$
Therefore
   \begin{align*}
  -2\alpha_N\int_0^t\Vert\nabla_xf_m(s)\Vert^2_Hds &+\sum_{k\in K}\int_0^t\Vert  P_m\sigma_k^N.\nabla_xf_m(s)\Vert^2_H \leq 0,\end{align*}
  since $\Vert  P_m\sigma_k^N.\nabla_xf_m(s)\Vert^2_H\leq \Vert  \sigma_k^N.\nabla_xf_m(s)\Vert^2_H$
  due   to   that    $P_m$   is  	the	$(\cdot,\cdot)_H$-orthogonal	projection	from	$H$	to	$H_m$.   Similarly,  we  have
   \begin{align*}
  \sum_{k\in K}\int_0^t\Vert   (\nabla\sigma_k^Nr).\nabla_rf_m(s)\Vert^2_Hds=\int_0^t (A_k^N\nabla_rf_m(s),\nabla_rf_m(s))_H ds,
  \end{align*}
  where $A^N_k$ is  given   by  \eqref{matrix-A},   thus
  \begin{align*}
-\int_0^t (A_k^N\nabla_rf_m(s),\nabla_rf_m(s))_H ds+\sum_{k\in K}\int_0^t\Vert   P_m (\nabla\sigma_k^Nr).\nabla_rf_m(s)\Vert^2_Hds\leq 0.\end{align*}
On  the other   hand,   note    that
\begin{align*}
&\sum_{j=1}^m\sum_{k\in K}\int_0^t(\sigma_k^N.\nabla_xf_m(s),e_j)_H((\nabla\sigma_k^Nr).\nabla_rf_m(s),e_j)_Hds\\&=\sum_{k\in K}\int_0^t(\sum_{j=1}^m(\sigma_k^N.\nabla_xf_m(s),e_j)_He_j,(\nabla\sigma_k^Nr).\nabla_rf_m(s))_Hds\\&=-\sum_{k\in K}\int_0^t(f_m(s),P_m\left[(\nabla\sigma_k^Nr).\nabla_r(P_m(\sigma_k^N.\nabla_xf_m(s))+2\dfrac{(\nabla\sigma_k^Nr)\cdot	r}{1+\vert	r\vert^2}P_m(\sigma_k^N.\nabla_xf_m(s))\right])_Hds\\&=-\int_0^t(f_m(s),\mathcal{Y}^mf_m(s))ds.
\end{align*}
  Summing   up, we  get
    \begin{align}\label{estimate-stage-1}&\displaystyle\Vert  f_m(t)\Vert^2_H-\Vert  P_mf_0\Vert_H^2+2\sigma^2\int_0^t \Vert\nabla_rf_m(s)\Vert_H^2ds =-2\int_0^t((\nabla_x u_Lr)\cdot\nabla_rf_m(s),f_m(s))_H\\&+ \dfrac{2}{\beta}\Vert  f_m(s)\Vert^2_H+\dfrac{1}{\beta}(r\cdot\nabla_rf_m(s) ,f_m(s) )_Hds-	4\sigma^2\int_0^t (\nabla_rf_m(s),rf_m(s))ds\notag\\&-2\int_0^t (A_k^N\nabla_rf_m(s),rf_m(s)) ds-2\sum_{k\in K}\int_0^t((\nabla \sigma_k^Nr)\cdot\nabla_rf_m(s),f_m(s))_HdW^k(s).\notag
    \end{align}
    Let	us	estimate	the	terms	of	the	right	hand	side    in  \eqref{estimate-stage-1}.	First,	note	that
   \begin{align}
\int_0^t((\nabla_x u_Lr)\cdot\nabla_rf_m(s),f_m(s))_Hds&=-\int_0^t(f_m(s),f_m(s)(\nabla_x u_Lr)\cdot	r)ds\notag\\&\leq	\Vert	\nabla_x u_L\Vert_{\infty}\int_0^t\Vert	f_m(s)\Vert_H^2ds.\label{reg-p-0}
\end{align}
A   standard    integration by  parts   gives   	
 $$
\displaystyle\int_0^t\dfrac{1}{\beta}(r\cdot\nabla_rf_m(s) ,f_m(s) )_Hds=-\dfrac{1}{\beta}\int_0^t(\Vert	f_m(s)\Vert_H^2 +(\vert	r\vert^2f_m(s) ,f_m(s) )ds,$$
and $
\displaystyle\vert\int_0^t\dfrac{1}{\beta}(r\cdot\nabla_rf_m(s) ,f_m(s) )_Hds\vert\leq\dfrac{2}{\beta}\int_0^t\Vert	f_m(s)\Vert_H^2ds.
$
Let	us		estimate	
$\displaystyle\int_0^t (\nabla_rf_m(s),rf_m(s))ds$.	Note  that
$
\displaystyle\int_0^t (\nabla_rf_m(s),rf_m(s))ds=-\int_0^t (f_m(s),f_m(s))ds,
$
thus
\begin{equation}\label{reg-p-2--}
-4\sigma^2\int_0^t (\nabla_rf_m(s),rf_m(s))ds=4\sigma^2\int_0^t \Vert	f_m(s)\Vert^2ds.
\end{equation}
Next,	the	term	$-2\displaystyle\int_0^t (A_k^N\nabla_rf_m(s),rf_m(s)) ds$.	Since	$\Div_r[\nabla \sigma_k^Nr]=0$,	we	get	
\begin{align*}
&\int_0^t (A_k^N\nabla_rf_m(s),rf_m(s)) ds=\sum_{k\in K}\int_0^t (  (\nabla \sigma_k^Nr)\cdot\nabla_rf_m(s),(\nabla \sigma_k^Nr)\cdot	rf_m(s)) ds\\
&=-\sum_{k\in K}\int_0^t\Big((  (\nabla \sigma_k^Nr)\cdot\nabla_rf_m(s),(\nabla \sigma_k^Nr)\cdot	rf_m(s)) + ((\nabla \sigma_k^Nr)f_m(s),f_m(s)\nabla_r[(\nabla \sigma_k^Nr)\cdot	r]) \Big)ds,
\end{align*}
and
$
2\displaystyle\int_0^t (A_k^N\nabla_rf_m(s),rf_m(s)) ds=-\sum_{k\in K}\int_0^t ((\nabla \sigma_k^Nr)f_m(s),f_m(s)\nabla_r[(\nabla \sigma_k^Nr)\cdot	r]) ds.$
On	the	other	hand,	note	that
\begin{align*}
&\sum_{k\in K}\int_0^t ((\nabla \sigma_k^Nr)f_m(s),f_m(s)\nabla_r[(\nabla \sigma_k^Nr)\cdot	r]) ds\\
&=\sum_{k\in K_+}\int_0^t(\theta_{\vert	k\vert}^N)^2\vert	k\vert^2(\dfrac{k}{\vert	k\vert}\cdot	r\dfrac{k^\perp}{\vert	k\vert}\sin k\cdot xf_m(s),f_m(s)\sin k\cdot x\nabla_r[\dfrac{k}{\vert	k\vert}\cdot	r\dfrac{k^\perp}{\vert	k\vert}\cdot	r]	)ds\\&\quad+\sum_{k\in K_-}\int_0^t(\theta_{\vert	k\vert}^N)^2\vert	k\vert^2(\dfrac{k}{\vert	k\vert}\cdot	r\dfrac{k^\perp}{\vert	k\vert}\cos k\cdot xf_m(s),f_m(s)\cos k\cdot x\nabla_r[\dfrac{k}{\vert	k\vert}\cdot	r\dfrac{k^\perp}{\vert	k\vert}\cdot	r]	)ds\\
&=\sum_{k\in K_+}\int_0^t(\theta_{\vert	k\vert}^N)^2\vert	k\vert^2(\dfrac{k}{\vert	k\vert}\cdot	r\dfrac{k^\perp}{\vert	k\vert}f_m(s),f_m(s)\nabla_r[\dfrac{k}{\vert	k\vert}\cdot	r\dfrac{k^\perp}{\vert	k\vert}\cdot	r]	)ds\\
&=\sum_{k\in K_+}\int_0^t(\theta_{\vert	k\vert}^N)^2\vert	k\vert^2\left((\dfrac{k}{\vert	k\vert}\cdot	r\dfrac{k^\perp\cdot	k}{\vert	k\vert^2}f_m(s),f_m(s)\dfrac{k^\perp}{\vert	k\vert}\cdot	r)+(\dfrac{k}{\vert	k\vert}\cdot	r\dfrac{k^\perp\cdot	k^\perp}{\vert	k\vert^2}f_m(s),f_m(s)\dfrac{k}{\vert	k\vert}\cdot	r)\right)ds\\
&\leq	2\sum_{k\in K_+}\int_0^t(\theta_{\vert	k\vert}^N)^2\vert	k\vert^2	\Vert	\vert	r\vert	f_m(s)\Vert^2	ds\leq	\sum_{\substack {k\in		K_+\\	N\leq		\vert	k\vert		\leq	2N}}\dfrac{2}{\vert	k\vert^2}\int_0^t	\Vert		f_m(s)\Vert_H^2	ds\leq	C\int_0^t	\Vert		f_m(s)\Vert_H^2	ds,
\end{align*}
thus
\begin{align}\label{reg-p-3--}
&\vert\sum_{k\in K}\int_0^t ((\nabla \sigma_k^Nr)f_m(s),f_m(s)\nabla_r[(\nabla \sigma_k^Nr)\cdot	r]) ds\vert\leq	C\int_0^t	\Vert		f_m(s)\Vert_H^2	ds,
\end{align}

where	$C>0$	independent	of	$N$	satisfies	$\sum_{\substack {k\in		K_+\\	N\leq		\vert	k\vert		\leq	2N}}\dfrac{2}{\vert	k\vert^2}\leq	C.$    Concerning  the stochastic	integral,
note	that   $
((\nabla \sigma_k^Nr)\cdot\nabla_rf_m(s),f_m(s))_H
=-(f_m(s),f_m(s)(\nabla \sigma_k^Nr)\cdot	r)).
$
Recall	that	
\begin{align}\label{nabla-sigma}
(\nabla \sigma_k^Nr)(x)= - k\cdot	r\theta_{\vert	k\vert}^N\frac{k^{\perp}}{\left\vert
	k\right\vert }\sin k\cdot x,\quad k\in K_{+};
(\nabla \sigma_k^Nr)(x)     =k\cdot	r\theta_{\vert	k\vert}^N\frac{k^{\perp}}{\left\vert
	k\right\vert }\cos k\cdot x,\quad k\in K_{-}
\end{align}
and
\begin{align*}
\sum_{k\in K}\int_0^t(-((\nabla \sigma_k^Nr)\cdot\nabla_rf_m(s),f_m(s))_H)dW^k(s)=2\sum_{k\in K}\int_0^t((f_m(s),f_m(s)(\nabla \sigma_k^Nr)\cdot	r)dW^k(s).
\end{align*}
By	using	 the Burkholder-Davis-Gundy inequality,	we get
\begin{align*}2\E\sup_{q\in[0,t]}\vert\int_0^q\sum_{k\in	K}(f_m(s),f_m(s)(\nabla \sigma_k^Nr)\cdot	r)dW^k(s)\vert&\leq	2\E[\int_0^t\sum_{k\in	K}\vert(f_m(s),f_m(s)(\nabla \sigma_k^Nr))\cdot	r)\vert^2ds]^{1/2},
\end{align*}
On	the	other	hand,	we	have
\begin{align*}
&\int_0^t\sum_{k\in K}\vert(f_m(s),f_m(s)(\nabla \sigma_k^Nr))\cdot	r)\vert^2ds\\
&=\sum_{k\in K_+}(\theta_{\vert	k\vert}^N)^2\vert	k\vert^2\int_0^t(-\dfrac{k}{\vert	k\vert}\cdot	\dfrac{r}{(1+\vert	r\vert^2)}(1+\vert	r\vert^2)^{1/2}f_m(s), r\cdot\frac{k^{\perp}}{\left\vert
	k\right\vert }\sin k\cdot x	(1+\vert	r\vert^2)^{1/2}f_m)^2ds\\&+\sum_{k\in K_-}(\theta_{\vert	k\vert}^N)^2\vert	k\vert^2\int_0^t(\dfrac{k}{\vert	k\vert}\cdot	\dfrac{r}{(1+\vert	r\vert^2)}(1+\vert	r\vert^2)^{1/2}f_m(s)	, r\cdot\frac{k^{\perp}}{\left\vert
	k\right\vert }\cos k\cdot x	(1+\vert	r\vert^2)^{1/2}f_m(s)	)^2ds\\
&\leq	\sum_{k\in K}(\theta_{\vert	k\vert}^N)^2\vert	k\vert^2\int_0^t\Vert(1+\vert	r\vert^2)^{1/2}f_m(s)\Vert^2 \Vert	(1+\vert	r\vert^2)^{1/2}f_m\Vert^2ds\\
&\leq	\sum_{k\in K}(\theta_{\vert	k\vert}^N)^2\vert	k\vert^2\int_0^t\Vert(1+\vert	r\vert^2)^{1/2}f_m(s)\Vert^4 ds=\sum_{\substack {k\in		K\\	N\leq		\vert	k\vert		\leq	2N}}(\theta_{\vert	k\vert}^N)^2\vert	k\vert^2\int_0^t\Vert	f_m(s)\Vert_H^2 \Vert	f_m(s)\Vert_H^2ds\\
&\leq	\sum_{\substack {k\in		K\\	N\leq		\vert	k\vert		\leq	2N}}\dfrac{1}{\vert	k\vert^2}\int_0^t\Vert	f_m(s)\Vert_H^2 \Vert	f_m(s)\Vert_H^2ds\leq	C\int_0^t\Vert	f_m(s)\Vert_H^2 \Vert	f_m(s)\Vert_H^2ds\\
&\leq	C\sup_{q\in[0,t]}\Vert	f_m(q)\Vert_H^2\int_0^t \Vert	f_m(s)\Vert_H^2ds,
\end{align*}
which   gives
\begin{align}\label{reg-p-5}
\int_0^t\sum_{k\in K}\vert(f_m(s),f_m(s)(\nabla \sigma_k^Nr))\cdot	r)\vert^2ds
\leq	C\sup_{q\in[0,t]}\Vert	f_m(q)\Vert_H^2\int_0^t \Vert	f_m(s)\Vert_H^2ds,
\end{align}
where	we	used	$\displaystyle\sum_{\substack {k\in		K\\	N\leq		\vert	k\vert		\leq	2N}}\dfrac{1}{\vert	k\vert^2}\leq	C,$  with	$C>0$	independent	of	$N$.	Therefore
\begin{align*}2\E\sup_{q\in[0,t]}\vert\int_0^q\sum_{k\in	K}(f_m(s),f_m(s)(\nabla \sigma_k^Nr)\cdot	r)dW^k(s)\vert&\leq	2\E[\int_0^t\sum_{k\in	K}\vert(f_m(s),f_m(s)(\nabla \sigma_k^Nr))\cdot	r)\vert^2ds]^{1/2}\\
&\leq	2\E[C\sup_{q\in[0,t]}\Vert	f_m(q)\Vert_H^2\int_0^t \Vert	f_m(s)\Vert_H^2ds]^{1/2}\\
&\leq	\epsilon	\E\sup_{q\in[0,t]}\Vert	f_m(q)\Vert_H^2+	\dfrac{C}{\epsilon}\E\int_0^t \Vert	f_m(s)\Vert_H^2ds,
\end{align*}
for	any	$\epsilon>0$	(to be chosen later).		By	using	\eqref{estimate-stage-1}	and	gathering	the	previous	estimates,	we	get
 \begin{align*}&(1-\epsilon)\displaystyle\E\sup_{q\in[0,t]}\Vert  f_m(q)\Vert^2_H+2\sigma^2\E\int_0^t \Vert\nabla_rf_m(s)\Vert_H^2 \\&\leq	\Vert  P_mf_0\Vert_H^2+ [2\Vert	\nabla_x u_L\Vert_{\infty}+\dfrac{8}{\beta}+4\sigma^2+2C+	\dfrac{C}{\epsilon}]\E\int_0^t\Vert  f_m(s)\Vert^2_Hds.
\end{align*}
By	chosing	$\epsilon=\dfrac{1}{2}$	and	setting		\begin{align}\label{const-Gronwall}
\lambda=2[2\Vert	\nabla_x u_L\Vert_{\infty}+\dfrac{8}{\beta}+4\sigma^2+4C],
\end{align}	we	obtain
\begin{align*}&\displaystyle\E\sup_{q\in[0,t]}\Vert  f_m(q)\Vert^2_H+4\sigma^2\E\int_0^t \Vert\nabla_rf_m(s)\Vert_H^2 \leq	2\Vert  f_0\Vert_H^2+ \lambda\E\int_0^t\Vert  f_m(s)\Vert^2_Hds.
\end{align*}
Finally,	Grönwall	lemma	ensures
\begin{align*}\forall t\geq   0:\displaystyle\E\sup_{q\in[0,t]}\Vert  f_m(q)\Vert^2_H+4\sigma^2\E\int_0^t \Vert\nabla_rf_m(s)\Vert_H^2 \leq	2\Vert  f_0\Vert_H^2e^{\lambda	t}.
\end{align*}
  As a   conclusion, we  get
 \begin{lemma}\label{Lemma9}
 For every  $m\in   \mathbb{N}^*$, there   exists  a   unique   solution $f_m\in C([0,T],L^2(\Omega;H_m))$  to    \eqref{approx-linear*SDE}, which   is  adapted to  the filtration  and satisfy
\begin{align}\label{borne}
    \forall t\geq   0:\displaystyle\E\sup_{q\in[0,t]}\Vert  f_m(q)\Vert^2_H+4\sigma^2\E\int_0^t \Vert\nabla_rf_m(s)\Vert_H^2 \leq	2\Vert  f_0\Vert_H^2e^{\lambda	t},
 \end{align}
 where	$\lambda$	is	given	by	\eqref{const-Gronwall}.
\end{lemma}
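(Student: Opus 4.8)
The plan is to regard \eqref{approx-linear*SDE} as a finite-dimensional Itô SDE for the coefficient vector and then to read off the energy bound by applying the finite-dimensional Itô formula to $\Vert f_m\Vert_H^2$. For existence and uniqueness, substituting $f_m=\sum_{j=1}^m g_{mj}w_j$ turns \eqref{approx-linear*SDE} into a system of SDEs whose drift and diffusion coefficients are linear (hence globally Lipschitz) in $(g_{mj})_j$, with coefficients that are bounded on $[0,T]$ because $u_L\in C([0,T];C^2)$ and the finitely many $\sigma_k^N$ are smooth. The classical theory for SDEs with Lipschitz coefficients (e.g.\ \cite[Ch.\ V]{Oksendal2013}) then produces a unique $\mathcal{F}_t$-adapted solution $f_m\in C([0,T];L^2(\Omega;H_m))$.

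For the a priori bound I would apply Itô's formula to $\Vert f_m(t)\Vert_H^2$, yielding the drift terms, the It\^o-correction terms from the quadratic variation of the $W^k$-integrals, and a martingale part, and then exploit a chain of exact cancellations. First, skew-symmetry of transport gives $(f_m,u_L\cdot\nabla_x f_m)_H=0$ and $(f_m,\sigma_k^N\cdot\nabla_x f_m)_H=0$. Second, since $\sum_{k}\Vert\sigma_k^N\cdot\nabla_x f_m\Vert_H^2=2\alpha_N\Vert\nabla_x f_m\Vert_H^2$ and $P_m$ is an $H$-contraction, the diffusive corrector $-2\alpha_N\Vert\nabla_x f_m\Vert_H^2$ absorbs $\sum_k\Vert P_m\sigma_k^N\cdot\nabla_x f_m\Vert_H^2$; the analogous identity $\sum_k\Vert(\nabla\sigma_k^N r)\cdot\nabla_r f_m\Vert_H^2=(A_k^N\nabla_r f_m,\nabla_r f_m)_H$ lets the $A_k^N$-corrector absorb its projected quadratic variation. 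Third, and most importantly, the cross-term $2\sum_{j}\sum_k(\sigma_k^N\cdot\nabla_x f_m,e_j)_H((\nabla\sigma_k^N r)\cdot\nabla_r f_m,e_j)_H$ equals $-2(f_m,\mathcal{Y}^m f_m)_H$ by \autoref{RMQ-proj}, and therefore cancels exactly the extra drift $+2(f_m,\mathcal{Y}^m f_m)_H$ carried by $\mathcal{Y}^m$.

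After these cancellations only lower-order terms survive, each bounded by $C\Vert f_m\Vert_H^2$: the $\nabla u_L$ stretching term through integration by parts in $r$ and $\Vert\nabla_x u_L\Vert_\infty$; the relaxation and weight-interaction terms $\tfrac{1}{\beta}(r\cdot\nabla_r f_m,f_m)_H$ and $4\sigma^2(\nabla_r f_m,rf_m)$ by integrating by parts against the weight $(1+\vert r\vert^2)$; and the interaction $(A_k^N\nabla_r f_m,rf_m)$, controlled using $\theta_k^N=a/\vert k\vert^2$ together with the shell summability $\sum_{N\le\vert k\vert\le 2N}\vert k\vert^{-2}\le C$ uniformly in $N$. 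The remaining martingale I would estimate by Burkholder--Davis--Gundy, bounding its supremum by $\epsilon\,\E\sup_{q\le t}\Vert f_m(q)\Vert_H^2+\tfrac{C}{\epsilon}\E\int_0^t\Vert f_m\Vert_H^2$, again invoking shell summability. Taking $\epsilon=\tfrac12$ absorbs the supremum into the left-hand side, and Grönwall's lemma delivers \eqref{borne} with $\lambda$ as in \eqref{const-Gronwall}; the constants being $N$-independent is exactly what the uniform bounds above guarantee.

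The main obstacle, and the organising principle of the whole scheme, is the third cancellation. In infinite dimensions this cross-term vanishes identically by the mirror symmetry of $Q$ (\autoref{lem1}), but the Galerkin projection $P_m$ destroys that algebraic identity. The artificial drift $\mathcal{Y}^m$ is engineered so that, after projection, the surviving cross-term and the added drift annihilate each other exactly at every finite $m$; verifying this identity is the delicate step, and it is precisely what is needed later when one must show $(f_m,\mathcal{Y}^m f_m)_H\to 0$ as $m\to\infty$ in order to recover the original equation \eqref{Ito-FP}.
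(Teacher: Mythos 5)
Your proposal is correct and follows essentially the same route as the paper: linear SDE theory for existence/uniqueness, the finite-dimensional Itô formula for $\Vert f_m\Vert_H^2$, the three cancellations (transport skew-symmetry, absorption of the projected quadratic variations by the Itô correctors via the $H$-contractivity of $P_m$, and the exact cancellation of the cross-term against the engineered drift $\mathcal{Y}^m$ as in \autoref{RMQ-proj}), followed by weighted integration by parts, shell summability, Burkholder--Davis--Gundy with $\epsilon=\tfrac12$, and Grönwall. Your closing remark correctly identifies the role of $\mathcal{Y}^m$ and why it must later be shown to vanish as $m\to\infty$, which is exactly the paper's organising idea.
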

  In  order   to  handle  the stochastic  integral    as  $N\to   +\infty$,   it  is  convenient   to  show    the following. 
\begin{lemma}\label{Lemma10}
    The    unique   solution $f_m\in C([0,T],L^2(\Omega;H_m))$  to    \eqref{approx-linear*SDE} satisfies
    \begin{align}\label{borne-p}
    \forall t\in[0,T] :\displaystyle\E\sup_{q\in[0,t]}\Vert  f_m(q)\Vert^{2p}_H+(2\sigma^2)^p\E(\int_0^t \Vert\nabla_rf_m(s)\Vert_H^2)^p \leq	\mathbf{C}\Vert  f_0\Vert_H^{2p}e^{\mathbf{C}t},  \forall 1<p<+\infty
 \end{align}
 where	$\mathbf{C}>0$	is    independent of  $N$ and $m$.
\end{lemma}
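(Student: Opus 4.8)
The plan is to upgrade the $L^2$ energy balance behind \autoref{Lemma9} to its $p$-th power, by applying the finite-dimensional It\^o formula first to $Y_t:=\Vert f_m(t)\Vert_H^2$ and then to $Y_t^p$, and to close the resulting inequality with the Burkholder--Davis--Gundy (BDG) inequality, Young's inequality and Gr\"onwall. The key preliminary observation, already contained in the proof of \autoref{Lemma9}, is that after all the cancellations specific to our noise (the balance of $\alpha_N\Delta_x$ against $\sum_k\Vert\sigma_k^N\cdot\nabla_x f_m\Vert^2$, of $A_k^N$ against $\sum_k\Vert(\nabla\sigma_k^Nr)\cdot\nabla_r f_m\Vert^2$, and the subtraction of $\mathcal Y^m$) the scalar process $Y_t$ satisfies, from \eqref{estimate-stage-1},
\begin{align*}
dY_t=-2\sigma^2\Vert\nabla_r f_m(t)\Vert_H^2\,dt+g_t\,dt+dM_t,\qquad M_t=2\sum_{k\in K}\int_0^t\big(f_m(s),f_m(s)(\nabla\sigma_k^Nr)\cdot r\big)\,dW^k(s),
\end{align*}
where $g_t$ gathers all the drift contributions and, by \eqref{reg-p-0}--\eqref{reg-p-3--}, obeys $\vert g_t\vert\le C\,Y_t$ with $C$ independent of $N$ and $m$.

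The one ingredient that keeps the whole argument independent of $N$ and $m$ is a uniform bound on the quadratic variation of $M$. From the chain of estimates culminating in \eqref{reg-p-5} one reads off the \emph{pointwise-in-time} inequality
\begin{align*}
\frac{d\langle M\rangle_t}{dt}=4\sum_{k\in K}\big\vert\big(f_m(t),f_m(t)(\nabla\sigma_k^Nr)\cdot r\big)\big\vert^2\le C\,\Vert f_m(t)\Vert_H^4=C\,Y_t^2,
\end{align*}
where $C$ is controlled solely by $\sum_{N\le\vert k\vert\le 2N}\vert k\vert^{-2}\le C$, hence uniform in $N$ and $m$; this is exactly where the shell structure $\theta_k^N=a\vert k\vert^{-2}$ enters. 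I regard establishing $d\langle M\rangle_t\le C\,Y_t^2\,dt$ uniformly, together with the contraction property $\Vert P_m\Vert_{L(H,H)}\le1$ that keeps the projected terms uniformly bounded, as the main obstacle; everything downstream is standard stochastic calculus.

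With these two bounds in hand I would apply It\^o to $Y_t^p$, using $d\langle Y\rangle_t=d\langle M\rangle_t$, to obtain
\begin{align*}
&Y_t^p+2\sigma^2 p\int_0^t Y_s^{p-1}\Vert\nabla_r f_m(s)\Vert_H^2\,ds\\
&\qquad= Y_0^p+p\int_0^t Y_s^{p-1}g_s\,ds+\tfrac12 p(p-1)\int_0^t Y_s^{p-2}\,d\langle M\rangle_s+p\int_0^t Y_s^{p-1}\,dM_s.
\end{align*}
The dissipation term on the left is nonnegative and is kept. Using $\vert g_s\vert\le C Y_s$ and $d\langle M\rangle_s\le C Y_s^2\,ds$, the first two correction integrals are each bounded by $C_p\int_0^t Y_s^p\,ds$. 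For the martingale term BDG and Young give
\begin{align*}
\E\sup_{q\le t}\Big\vert p\int_0^q Y_s^{p-1}\,dM_s\Big\vert
&\le C_p\,\E\Big(\int_0^t Y_s^{2p}\,ds\Big)^{1/2}
\le C_p\,\E\Big(\sup_{q\le t}Y_q^p\int_0^t Y_s^p\,ds\Big)^{1/2}\\
&\le \tfrac12\,\E\sup_{q\le t}Y_q^p+C_p\,\E\int_0^t Y_s^p\,ds,
\end{align*}
using $Y_s^{2p}\le(\sup_{q\le t}Y_q^p)\,Y_s^p$. Taking $\sup_{q\le t}$ and expectation, absorbing the $\tfrac12\E\sup$ term on the left, noting $Y_0=\Vert P_mf_0\Vert_H^2\le\Vert f_0\Vert_H^2$, and invoking $\E\int_0^t Y_s^p\,ds\le\int_0^t\E\sup_{q\le s}Y_q^p\,ds$, Gr\"onwall yields $\E\sup_{q\le t}Y_q^p\le\mathbf C\Vert f_0\Vert_H^{2p}e^{\mathbf Ct}$ with $\mathbf C=\mathbf C(p)$ independent of $N,m$; this is the first half of \eqref{borne-p}.

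Finally, to produce the gradient term $(2\sigma^2)^p\,\E\big(\int_0^t\Vert\nabla_r f_m\Vert_H^2\big)^p$, I would revisit the $p=1$ identity. Writing $D_t:=2\sigma^2\int_0^t\Vert\nabla_r f_m(s)\Vert_H^2\,ds$ and using $Y_t\ge0$, the balance $Y_t+D_t=Y_0+\int_0^t g_s\,ds+M_t$ gives $D_t\le Y_0+C\int_0^t Y_s\,ds+\sup_{q\le t}\vert M_q\vert$; note $D_t^p=(2\sigma^2)^p\big(\int_0^t\Vert\nabla_r f_m\Vert_H^2\big)^p$, so the prefactor is built in. Raising to the power $p$ and taking expectations, the contributions of $Y_0$ and of $\int_0^t Y_s\,ds$ are bounded by the sup-moment estimate just proved (via Jensen), while $\E\sup_{q\le t}\vert M_q\vert^p\le C_p\,\E\langle M\rangle_t^{p/2}\le C_p\,\E\big(\int_0^t Y_s^2\,ds\big)^{p/2}$ is handled through $\int_0^t Y_s^2\,ds\le(\sup_{q\le t}Y_q)\int_0^t Y_s\,ds$ and Young's inequality, again using the sup-moment bound. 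Collecting these yields $(2\sigma^2)^p\,\E\big(\int_0^t\Vert\nabla_r f_m\Vert_H^2\big)^p\le\mathbf C\Vert f_0\Vert_H^{2p}e^{\mathbf Ct}$, completing \eqref{borne-p}.
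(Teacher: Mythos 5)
Your proposal is correct, but it follows a genuinely different route from the paper. The paper does not apply It\^o's formula a second time: it starts from the pathwise inequality already extracted for \autoref{Lemma9},
\begin{align*}
\Vert f_m(t)\Vert_H^2+2\sigma^2\int_0^t\Vert\nabla_rf_m(s)\Vert_H^2\,ds\le\Vert f_0\Vert_H^2+\tfrac{\lambda}{2}\int_0^t\Vert f_m(s)\Vert_H^2\,ds+2\Big\vert\sum_{k\in K}\int_0^t(\cdots)\,dW^k\Big\vert,
\end{align*}
raises both sides to the power $p$ (the two nonnegative terms on the left each being dominated by the $p$-th power of their sum), and then applies the Burkholder--Davis--Gundy inequality at exponent $p$ to the original martingale, using \eqref{reg-p-5}, followed by Young and Gr\"onwall; this delivers the sup-moment and the $(\int_0^t\Vert\nabla_rf_m\Vert_H^2)^p$ term in one pass. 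You instead apply It\^o to $Y_t^p$ with $Y_t=\Vert f_m(t)\Vert_H^2$, which introduces the correction $\tfrac12p(p-1)\int Y^{p-2}\,d\langle M\rangle$ and the new martingale $\int Y^{p-1}\,dM$, and then you need a separate second pass through the $p=1$ balance to recover the gradient term. Both arguments rest on exactly the same two uniform ingredients --- the drift bound $\vert g_t\vert\le CY_t$ and the quadratic-variation bound $d\langle M\rangle_t\le CY_t^2\,dt$, each coming from $\sum_{N\le\vert k\vert\le2N}\vert k\vert^{-2}\le C$ --- so the constants are uniform in $N$ and $m$ in either version. Two minor points you should make explicit: for $1<p<2$ the map $y\mapsto y^{p}$ is not $C^2$ at $0$, so the It\^o step should be justified either by noting that $x\mapsto\Vert x\Vert_H^{2p}$ is $C^2$ on the finite-dimensional space $H_m$ for every $p>1$ (the Hessian term is then bounded by $C_pY^{p-2}\,d\langle M\rangle\le C_pY^p\,dt$, harmless at $Y=0$) or by a regularization $(\varepsilon+Y)^p$; and the bounds \eqref{reg-p-0}--\eqref{reg-p-3--} are stated in the paper as time-integrated estimates, so you should observe that their integrands are bounded pointwise in $s$, which is indeed what the paper's computations show. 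With these remarks your argument is complete; the paper's version is shorter here because the pathwise inequality preserves the dissipation term under taking $p$-th powers, whereas your version is the more canonical $L^p$-moment scheme and would also work when no pathwise inequality is available.
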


\begin{proof}
From    \eqref{estimate-stage-1},   by  using   \eqref{reg-p-0},   \eqref{reg-p-2--} and \eqref{reg-p-3--} we  get
  \begin{align*}\displaystyle\Vert  f_m(t)\Vert^2_H+2\sigma^2\int_0^t \Vert\nabla_rf_m(s)\Vert_H^2 ds
  &\leq \Vert  f_0\Vert_H^2+ \dfrac{\lambda}{2}\int_0^t\Vert  f_m(s)\Vert^2_Hds\\&+2\vert   \sum_{k\in K}\int_0^t(-((\nabla \sigma_k^Nr)\cdot\nabla_rf_m(s),f_m(s))_H)dW^k(s)\vert.
    \end{align*}    
   Let  $p>1$,   we have
    \begin{align*}\displaystyle\Vert  f_m(t)\Vert^{2p}_H+(2\sigma^2)^p(\int_0^t \Vert\nabla_rf_m(s)\Vert_H^2 ds)^p
  &\leq C_p[\Vert  f_0\Vert_H^{2p}+ [\dfrac{\lambda}{2}]^pt^{p-1}\int_0^t\Vert  f_m(s)\Vert^{2p}_Hds\\&+2^pC_p\vert   \sum_{k\in K}\int_0^t(-((\nabla \sigma_k^Nr)\cdot\nabla_rf_m(s),f_m(s))_H)dW^k(s)\vert^p.
    \end{align*}    
    Now,    by	using	  Burkholder-Davis-Gundy inequality    and \eqref{reg-p-5},	we get
\begin{align*}2^pC_p&\E\sup_{q\in[0,t]}\vert\int_0^q\sum_{k\in	K}(f_m(s),f_m(s)(\nabla \sigma_k^Nr)\cdot	r)dW^k(s)\vert^p\\&\leq	2^pC_p\E[\int_0^t\sum_{k\in	K}\vert(f_m(s),f_m(s)(\nabla \sigma_k^Nr))\cdot	r)\vert^2ds]^{p/2}\\
&\leq    2^pC_pC^{p/2}t^{(p-1)/2} \E[\sup_{q\in[0,t]}\Vert	f_m(q)\Vert_H^{2p}\int_0^t \Vert	f_m(s)\Vert_H^{2p}ds]^{1/2}\\&\leq	\epsilon	\E\sup_{q\in[0,t]}\Vert	f_m(q)\Vert_H^{2p}+	\dfrac{(2^pC_p)^2C^pt^{p-1}}{\epsilon}\E\int_0^t \Vert	f_m(s)\Vert_H^{2p}ds,
\end{align*}
for	any	$\epsilon>0$	(to be chosen later).	Consequently,    there   exists  $\mathbf{C}>0$  independent of  $m$ and $N$ such    that
  \begin{align*}(1-\epsilon)	\E\sup_{q\in[0,t]}\Vert	f_m(q)\Vert_H^{2p}+(2\sigma^2)^p\E(\int_0^t \Vert\nabla_rf_m(s)\Vert_H^2 ds)^p
  &\leq \mathbf{C}[\Vert  f_0\Vert_H^{2p}+ (\mathbf{C}+\dfrac{\mathbf{C}}{\epsilon})\E\int_0^t\Vert  f_m(s)\Vert^{2p}_Hds.    \end{align*}  
Hence, \eqref{borne-p}   follows  by    choosing $\epsilon=\dfrac{1}{2}$ and applying        Grönwall	lemma.
\end{proof}

\subsection{Existence of solution to \eqref{Ito-FP}} \label{Subsection-exist-proof}
We  have  the    following   equation of $f_m(t)=f_m(x,r,t)$:  \begin{align}
df_m(t)&+P_m\Div_x(u_Lf_m(t))dt+P_m\Div_r((\nabla u_Lr-\dfrac{1}{\beta}r)f_m(t)) dt\label{approx-eqn.}\\	&=\sum_{k\in K}P_m(\sigma_k^N\cdot\nabla_xP_m[(\nabla\sigma_k^Nr).\nabla_rf_m])dt+	\sigma^2P_m\Delta_rf_m(t)dt\notag\\&-\sum_{k\in K}P_m\sigma_k^N.\nabla_xf^N(t)dW^k -\sum_{k\in K}P_m(\nabla \sigma_k^Nr).\nabla_rf_m(t) dW^k\notag\\
&+\alpha_N	P_m\Delta_xf_m(t)dt+ \dfrac{1}{2}P_m\Div_r(\sum_{k\in K}\left((\nabla \sigma_k^Nr) \otimes (\nabla \sigma_k^Nr)\right) \nabla_rf_m(t))dt,\notag\\
f_m|_{t=0}&=P_mf_0.\notag
\end{align}
  
  From \eqref{approx-eqn.} we  get  
  for   $1\leq  j\leq   m:$
		\begin{align}&\int_{\mathbb{T}^2}\int_{\mathbb{R}^2}f_m(t) w_j drdx-\int_{\mathbb{T}^2}\int_{\mathbb{R}^2}P_mf_0 w_j drdx\label{eqn-app-key1}\\&-\int_0^t\int_{\mathbb{T}^2}\int_{\mathbb{R}^2}f_m(s)\left(u_L(s)\cdot \nabla_x w_j + (\nabla u_L(s)r-\dfrac{1}{\beta}r) \cdot\nabla_r w_j \right)dr dxds\notag\\
  &=-\sum_{k\in K}\int_0^t\int_{\mathbb{T}^2}\int_{\mathbb{R}^2}   \nabla_rf_m(s),(\nabla\sigma_k^Nr)P_m[\sigma_k^N\cdot\nabla_xw_i ]drdx  ds\notag\\
  	&-	\sigma^2\int_0^t\int_{\mathbb{T}^2}\int_{\mathbb{R}^2}\nabla_rf_m(s) \cdot\nabla_r w_j drdxds+\alpha_N\int_0^t\int_{\mathbb{T}^2}\int_{\mathbb{R}^2}f_m(s) \cdot\Delta_xw_j drdxds\notag\\&+\sum_{k\in K}\int_0^t\int_{\mathbb{T}^2}\int_{\mathbb{R}^2}[f_m(s) \sigma_k^N.\nabla_xw_j+f_m(s) (\nabla \sigma_k^Nr).\nabla_r w_j] drdxd    W^k(s)\notag\\
		&- \dfrac{1}{2}\int_0^t\int_{\mathbb{T}^2}\int_{\mathbb{R}^2}(\sum_{k\in K}\left((\nabla \sigma_k^Nr) \otimes (\nabla \sigma_k^Nr)\right) \nabla_rf_m(s))\cdot\nabla_r w_j drdxds.\notag\end{align}
  Let   us     pass    to  the limit   in  the last    equation.
  \subsubsection{$1^{st} step$}
By  using   \autoref{Lemma9},      we  are able    to  get the following   convergences  (up to a subsequence)
  \begin{align}
 	f_m \rightharpoonup \widetilde{f} &\text{		in	}  L^2(\Omega ;L^2([0, T];H)),\label{cv-appro-3}\\
 	 f_m \rightharpoonup^* \widetilde{f} &\text{		in	}  L^2_{w-*}(\Omega ;L^\infty([0, T];H))),\label{cv-appro-4}\\
   \nabla_rf_m \rightharpoonup \nabla_r\widetilde{f} &\text{		in	}  L^2(\Omega ;L^2([0, T];H)),\label{cv-appro-5}
 	\end{align}
   On   the other   hand,   from  \autoref{Lemma10},  we  obtain    also
      \begin{align}\label{borne-p-limit}
    \forall t\in[0,T] :\displaystyle\E\sup_{q\in[0,t]}\Vert  \widetilde{f}(q)\Vert^{2p}_H+(2\sigma^2)^p\E(\int_0^t \Vert\nabla_r\widetilde{f}(s)\Vert_H^2)^p \leq	\mathbf{C}\Vert  f_0\Vert_H^{2p}e^{\mathbf{C}t},  \forall 1<p<+\infty
 \end{align}
 where	$\mathbf{C}>0$	is    independent of  $N$ and $m$.
  	\subsubsection{$2^{nd} step$}
For  $\phi \in U$, note that    $\displaystyle\int_{\mathbb{T}^2}\int_{\mathbb{R}^2}f_m(x,r,t)\phi(x,r)dxdr$	is adapted with    respect to  $(\mathcal{F}_t)_t$    and recall  that    the space of 
adapted processes is a closed convex    subspace of $L^2 (\Omega \times [0,T])$, hence weakly
closed. Therefore $\displaystyle\int_{\mathbb{T}^2}\int_{\mathbb{R}^2}\widetilde{f}(x,r,t)\phi(x,r)dxdr$ is also adapted  and  its    It\^o   integral    is  well    defined and  bounded.   Now, let us consider the  following mapping
\begin{align*}
\mathcal{L}: L^2(\Omega\times[0,T]; H) &\to L^2(\Omega\times[0,T]; \mathbb{R})\\
f_m &\mapsto \sum_{k\in K}\int_0^t\int_{\mathbb{T}^2}\int_{\mathbb{R}^2}f_m(s)( \sigma_k^N.\nabla_x\phi +(\nabla \sigma_k^Nr).\nabla_r \phi) drdxdW^k(s)\notag,
\end{align*}
which is linear and bounded. Therefore, by  using
 \eqref{cv-appro-3}, we infer that 
 \begin{align*}
 \sum_{k\in K}&\int_0^t\int_{\mathbb{T}^2}\int_{\mathbb{R}^2}f_m(s)( \sigma_k^N.\nabla_x\phi +(\nabla \sigma_k^Nr).\nabla_r \phi) drdxdW^k(s) \\&\rightharpoonup \sum_{k\in K}\int_0^t\int_{\mathbb{T}^2}\int_{\mathbb{R}^2}\widetilde{f}(s)( \sigma_k^N.\nabla_x\phi +(\nabla \sigma_k^Nr).\nabla_r \phi) drdxdW^k(s) \text{  in } L^2(\Omega\times [0,T]).\notag
 \end{align*}
 \subsubsection{$3^{rd} step$} 
For $\phi\in H_m$ and  $t\in [0,T]$, let us  set 
\begin{align*}
B_m(t) :=\int_{\mathbb{T}^2}\int_{\mathbb{R}^2}f_m(t) \phi drdx  -\sum_{k\in K}\int_0^t\int_{\mathbb{T}^2}\int_{\mathbb{R}^2}f_m(s)( \sigma_k^N.\nabla_x\phi +(\nabla \sigma_k^Nr).\nabla_r \phi) drdxdW^k(s).
\end{align*}
From    \eqref{eqn-app-key1},   we  write   (in distributional sense    with    respect to  $t$)
		\begin{align*}&\dfrac{d}{dt}B_m=\int_{\mathbb{T}^2}\int_{\mathbb{R}^2}f_m\left(u_L\cdot \nabla_x \phi + (\nabla u_Lr-\dfrac{1}{\beta}r) \cdot\nabla_r \phi \right)dr dx\notag\\
  &-\sum_{k\in K}\int_{\mathbb{T}^2}\int_{\mathbb{R}^2}   \nabla_rf_m(\nabla\sigma_k^Nr)P_m[\sigma_k^N\cdot\nabla_x\phi ]drdx  -	\sigma^2\int_{\mathbb{T}^2}\int_{\mathbb{R}^2}\nabla_rf_m \cdot\nabla_r \phi drdxds\notag\\
		&+\alpha_N\int_{\mathbb{T}^2}\int_{\mathbb{R}^2}f_m \Delta_x\phi drdx- \dfrac{1}{2}\int_{\mathbb{T}^2}\int_{\mathbb{R}^2}(\sum_{k\in K}\left((\nabla \sigma_k^Nr) \otimes (\nabla \sigma_k^Nr)\right) \nabla_rf_m)\cdot\nabla_r \phi drdx.\notag\end{align*}

 Let $A\in \mathcal{F}$ and $\xi\in  \mathcal{D}(0,T)$\footnote{$\mathcal{D}(0,T)$ denotes the space of $\mathcal{C}^\infty$-functions with compact support in $]0,T[$.}, by multiplying the    last    equation    by    $\mathbb{I}_A\xi$ and integrating  over $\Omega \times [0,T]$ we derive 
\begin{align*}
&-\int_A\int_0^T\bigl[B_m\dfrac{d\xi}{ds}\bigr]dsdP\\
&=\int_A\int_0^T\int_{\mathbb{T}^2}\int_{\mathbb{R}^2}f_m(s)\left(u_L(s)\cdot \nabla_x \phi + (\nabla u_L(s)r-\dfrac{1}{\beta}r) \cdot\nabla_r \phi \right)\xi(s) dr dxdsdP \\
&-\int_A\int_0^T\sum_{k\in K}\int_{\mathbb{T}^2}\int_{\mathbb{R}^2}   \nabla_rf_m(s)(\nabla\sigma_k^Nr)P_m[\sigma_k^N\cdot\nabla_x\phi ] \xi(s) dr dxdsdP
\\
&-\int_A\int_0^T \sigma^2\int_{\mathbb{T}^2}\int_{\mathbb{R}^2}\nabla_rf_m(s) \cdot\nabla_r \phi \xi(s) dr dxdsdP+\int_A\int_0^T\alpha_N\int_{\mathbb{T}^2}\int_{\mathbb{R}^2}f_m(s) \cdot\Delta_x\phi \xi(s) dr dxdsdP\\
&-\int_A\int_0^T\dfrac{1}{2}\int_{\mathbb{T}^2}\int_{\mathbb{R}^2}(\sum_{k\in K}\left((\nabla \sigma_k^Nr) \otimes (\nabla \sigma_k^Nr)\right) \nabla_rf_m(s))\cdot\nabla_r \phi\xi(s) dr dxdsdP , \quad   \forall \phi \in H_m.
\end{align*}

  Now,    let us  prove the following.
  \begin{align}\label{limit-covarnaince-0}
      \int_A\int_0^T\sum_{k\in K}\int_{\mathbb{T}^2}\int_{\mathbb{R}^2}   \nabla_rf_m(s)(\nabla\sigma_k^Nr)P_m[\sigma_k^N\cdot\nabla_x\phi ] \xi(s) dr dxdsdP\to0 \text{    as  }   m\to    +\infty.
  \end{align}
  Indeed,   it's    sufficient  to  pass    to  the limit   with $(w_i)_{i\in\mathbb{N}}$   as  test    functions.  Thus, for $1\leq  i\leq   m$,    we  recall
        \begin{align*}
            -&\int_A\int_0^T\sum_{k\in K}\int_{\mathbb{T}^2}\int_{\mathbb{R}^2}   \nabla_rf_m(s)(\nabla\sigma_k^Nr)P_m[\sigma_k^N\cdot\nabla_x\phi ] \xi(s) dr dxdsdP\\=  &-\sum_{k\in K}\E\int_0^T(   \nabla_rf_m(s),(\nabla\sigma_k^Nr)P_m[\sigma_k^N\cdot\nabla_xw_i ])1_A\xi(s)  ds\\
              &=\E\int_0^T\langle   \sum_{k\in K}P_m(\sigma_k^N\cdot\nabla_xP_m[(\nabla\sigma_k^Nr).\nabla_rf_m(s)]),w_i \rangle1_A\xi(s) ds
                 \end{align*}
            
        where   $ \E$ denotes the expectation.
       On   the other   hand,   the following   convergence holds
        \begin{align*}
          \sum_{k\in K}  (\nabla\sigma_k^Nr)P_m[\sigma_k^N\cdot\nabla_xw_i ]    \to \sum_{k\in K}  (\nabla\sigma_k^Nr)\sigma_k^N\cdot\nabla_xw_i   \text{  in  }   L^2(\T^2\times\R^2). 
        \end{align*}
        Indeed, denote  by  $\Vert  \cdot\Vert_2$   the norm    in  $L^2(\T^2\times\R^2)$, we  get
        \begin{align*}
             \Vert \sum_{k\in K}  (\nabla\sigma_k^Nr)(P_m-I)[\sigma_k^N\cdot\nabla_xw_i ]  \Vert_2^2&\leq    
              \sum_{k\in K} (\theta_{\vert   k\vert}^N)^2\vert   k\vert^2 \Vert\vert   r\vert  (P_m-I)[\sigma_k^N\cdot\nabla_xw_i ]  \Vert_2^2\\    &\leq    
              \sum_{k\in K} (\theta_{\vert   k\vert}^N)^2\vert   k\vert^2 \Vert(P_m-I)[\sigma_k^N\cdot\nabla_xw_i ]  \Vert_H^2\\
              &\leq 
              \sum_{k\in K} (\theta_{\vert   k\vert}^N)^4\vert   k\vert^2 \Vert P_m-I\Vert_{L(H,H)}^2\Vert  \nabla_xw_i   \Vert_H^2\\
              &\leq  \sum_{k\in K} (\theta_{\vert   k\vert}^N)^4\vert   k\vert^2 \Vert P_m-I\Vert_{L(H,H)}^2\Vert w_i   \Vert_U^2\\
              &\leq  C \Vert P_m-I\Vert_{L(H,H)}^2\Vert w_i   \Vert_U^2 \to 0,
              \end{align*}
              where $C>0$ \footnote{
             Recall that     $\displaystyle\sum_{k\in K} (\theta_{\vert   k\vert}^N)^4\vert   k\vert^2   = \sum_{k\in K} \dfrac{1}{\vert   k\vert^6}  \leq    C$.}   and since   $P_m$   is  an  orthogonal projection  on  $H$. On  the other   hand,   $\nabla_rf_m$   converges   weakly  to  $\nabla_r\widetilde f$   in $L^2(\Omega\times[0,T]\times\T^2\times\R^2)$,  thanks  to \eqref{cv-appro-5}. Therefore
                   \begin{align*}
    \lim_m        -&\int_A\int_0^T\sum_{k\in K}\int_{\mathbb{T}^2}\int_{\mathbb{R}^2}   \nabla_rf_m(s)(\nabla\sigma_k^Nr)P_m[\sigma_k^N\cdot\nabla_x\phi ] \xi(s) dr dxdsdP\\
    &=-\sum_{k\in K}\int_A\int_0^T(   \nabla_r\widetilde f(s),(\nabla\sigma_k^Nr)\sigma_k^N\cdot\nabla_xw_i )\xi(s)   dsdP\\
    &=\int_A\int_0^T(  \widetilde f(s),\sum_{k\in K}(\nabla\sigma_k^Nr)\cdot\nabla_r\sigma_k^N\cdot\nabla_xw_i)\xi(s)   dsdP=0\quad   \forall i\in    \mathbb{N}.
        \end{align*}
        Indeed, for given  function    $\psi$ we  have    $\displaystyle\sum_{k\in K}	(\nabla \sigma_k^Nr).\nabla_r(\sigma_k^N.\nabla_x\psi)=	\sum_{k\in K} \sum_{l,\gamma,i=1}^2 \partial_{x_\gamma}\sigma_k^ir_\gamma\partial_{r_i}(\sigma_k^l\partial_{x_l} \psi)$
        and 
        \begin{align*}
					\sum_{k\in K} \sum_{l,\gamma,i=1}^2 \partial_{x_\gamma}\sigma_k^ir_\gamma\partial_{r_i}(\sigma_k^l\partial_{x_l} \psi)=	\sum_{k\in K} \sum_{l,\gamma,i=1}^2 \partial_{x_\gamma}\sigma_k^i\sigma_k^l\partial_{x_l}(r_\gamma\partial_{r_i} \psi)=\sum_{l,\gamma,i=1}^2\partial_{x_\gamma} Q_{i,l}(0)\partial_{x_l}(r_\gamma\partial_{r_i}\psi).
					\end{align*}
    Since   the covariance  matrix  $Q$ satisfies   $Q(x)=Q(-x)$ then   $\partial_{x_\gamma} Q_{i,l}(0)=0$. As a result we get
     \begin{align*}
&-\int_A\int_0^T\bigl[B\dfrac{d\xi}{ds}\bigr]dsdP\\
&=\int_A\int_0^T\int_{\mathbb{T}^2}\int_{\mathbb{R}^2}\widetilde    f(s)\left(u_L(s)\cdot \nabla_x \phi + (\nabla u_L(s)r-\dfrac{1}{\beta}r) \cdot\nabla_r \phi \right)\xi(s) dr dxdsdP \\
&-\int_A\int_0^T \sigma^2\int_{\mathbb{T}^2}\int_{\mathbb{R}^2}\nabla_r\widetilde    f(s) \cdot\nabla_r \phi \xi(s) dr dxdsdP+\int_A\int_0^T\alpha_N\int_{\mathbb{T}^2}\int_{\mathbb{R}^2}\widetilde    f(s) \cdot\Delta_x\phi \xi(s) dr dxdsdP\\
&-\int_A\int_0^T\dfrac{1}{2}\int_{\mathbb{T}^2}\int_{\mathbb{R}^2}(\sum_{k\in K}\left((\nabla \sigma_k^Nr) \otimes (\nabla \sigma_k^Nr)\right) \nabla_r\widetilde    f(s))\cdot\nabla_r \phi\xi(s) dr dxdsdP , \quad   \forall \phi \in U,
\end{align*}
where   \begin{align*}
B(t) :=\int_{\mathbb{T}^2}\int_{\mathbb{R}^2}\widetilde    f(t) \phi drdx  -\sum_{k\in K}\int_0^t\int_{\mathbb{T}^2}\int_{\mathbb{R}^2}\widetilde    f(s)( \sigma_k^N.\nabla_x\phi +(\nabla \sigma_k^Nr).\nabla_r \phi) drdxdW^k(s).
\end{align*}

Then, taking into account the regularity    of $\widetilde    f$ , we infer that the distributional derivative $\dfrac{dB}{dt}$ belongs to the space $L^2(\Omega\times[0,T]).$ Recalling that $B\in L^2(\Omega\times[0,T])$, we conclude that 
\begin{align*} B(\cdot)\in   L^2(\Omega;C([0,T]).
\end{align*}

Considering the properties of  It\^o's integral, we deduce
 $$ \int_{\mathbb{T}^2}\int_{\mathbb{R}^2}\widetilde    f(t) \phi drdx\in L^2(\Omega;C([0,T]),$$
 which  means   that   $\widetilde    f\in  L^2(\Omega;C([0,T];U^\prime)$ and   therefore 
 $\widetilde    f\in  L^2(\Omega;C_w([0,T];H),$ thanks to \eqref{cv-appro-4}    and \cite[Lemma. 1.4 p. 263]{Temam77}. We   finish  the proof   by  showing some    continuous  convergence in  time.   Indeed,
let  $\xi \in \mathcal{C}^\infty([0,t])$ for $t\in ]0,T]$ and note that the	following	integration	by	parts	formula	holds
\begin{align}\label{IPPtimez-1}
\int_0^t	\dfrac{dB}{ds}(s)\xi(s) ds&=-\int_0^tB(s)\dfrac{d\xi}{ds}ds+B(t)\xi(t)-\int_{\mathbb{T}^2}\int_{\mathbb{R}^2}    f_0 \phi drdx\xi(0).
\end{align}
Now,    by  standard   arguments  (see e.g.  \cite[proof    of  Prop. 3.]{vallet2019well} ) we  get
	for any $t\in ]0,T]$, $$\int_{\mathbb{T}^2}\int_{\mathbb{R}^2}\widetilde    f_m(t) \phi drdx \rightharpoonup \int_{\mathbb{T}^2}\int_{\mathbb{R}^2}\widetilde    f(t) \phi drdx \text{ in } L^2(\Omega,H), \text{ as } m\to \infty.$$
 and    $\widetilde f(0)=f_0$   in  $H$-sense.
In  conclusion,     there   exists  a   solution    in  the sense   of  \autoref{Def-sol} ($\widetilde  f=f^N$  to  stress  the dependence  $N$,    since   we  will    pass    to  the limit   as  $N\to   +\infty $   in  \autoref{Section-diffusion}.)  
\begin{align*}&\int_{\mathbb{T}^2}\int_{\mathbb{R}^2}f^N(t) \phi drdx-\int_{\mathbb{T}^2}\int_{\mathbb{R}^2}f_0 \phi drdx\\&-\int_0^t\int_{\mathbb{T}^2}\int_{\mathbb{R}^2}f^N(s)\left(u_L(s)\cdot \nabla_x \phi+ (\nabla u_L(s)r-\dfrac{1}{\beta}r) \cdot\nabla_r \phi \right)dr dxds\\	=&-	\sigma^2\int_0^t\int_{\mathbb{T}^2}\int_{\mathbb{R}^2}\nabla_rf^N(s) \cdot\nabla_r \phi drdxds+\alpha_N\int_0^t\int_{\mathbb{T}^2}\int_{\mathbb{R}^2}f^N(s) \cdot\Delta_x\phi   drdxds\\&+\sum_{k\in K}\int_0^t\int_{\mathbb{T}^2}\int_{\mathbb{R}^2}[f^N(s) \sigma_k^N.\nabla_x\phi+f^N(s) (\nabla \sigma_k^Nr).\nabla_r\phi ]drdxdW^k(s)\\
		&- \dfrac{1}{2}\int_0^t\int_{\mathbb{T}^2}\int_{\mathbb{R}^2}(\sum_{k\in K}\left((\nabla \sigma_k^Nr) \otimes (\nabla \sigma_k^Nr)\right) \nabla_rf^N(s))\cdot\nabla_r \phi drdxds,\end{align*}
		for any $\phi  \in U$.    In  particular, $f^N$   is  adapted with    respect to   the given   filtration.
  \subsection{Uniqueness of   quasi-regular weak    solution}\label{Subsection-uniq}
 In order   to  prove  (4) in \autoref{Def-sol}   and the  uniqueness  of  "\textit{quasi-regular   weak    solution}"    to  \eqref{Ito-FP},   we  need    first   to  prove   the existence   and uniqueness  of  solution   $V^N$ to  an  appropriate mean    equation.   Namely, we  prove   the following. 
\begin{proposition}\label{prop-exisence-mean}
  For   any $t\in[0,T],$   there   exists  $V^N(t)=\E[  f^N(t)e_g(t)]$  such    that
\begin{enumerate}
    \item $V^N \in  L^\infty([0, T];H),  $     $    \nabla_rV^N \in     L^2([0, T];H)$  and $V^N\in  C_w([0,T];H)$.
    \item   For any $t\in[0,T],$    it  holds
    \begin{align*}
&\int_{\mathbb{T}^2}\int_{\mathbb{R}^2}   V^N(t)\phi   dxdr-\int_{\mathbb{T}^2}\int_{\mathbb{R}^2}   f_0\phi dxdr\\
&=\int_0^t\int_{\mathbb{T}^2}\int_{\mathbb{R}^2}  V^N(s)\left([u_L(s)-h_n]\cdot \nabla_x \phi + ([\nabla u_L(s)r-y_n]-\dfrac{1}{\beta}r) \cdot\nabla_r \phi \right) dr dxds \\
&-\int_0^t \sigma^2\int_{\mathbb{T}^2}\int_{\mathbb{R}^2}\nabla_r V^N(s) \cdot\nabla_r \phi  dr dxds+\int_0^T\alpha_N\int_{\mathbb{T}^2}\int_{\mathbb{R}^2}V^N(s) \cdot\Delta_x\phi  dr dxds\\
&-\int_0^t\dfrac{1}{2}\int_{\mathbb{T}^2}\int_{\mathbb{R}^2}(\sum_{k\in K}\left((\nabla \sigma_k^Nr) \otimes (\nabla \sigma_k^Nr)\right) \nabla_r V^N(s))\cdot\nabla_r \phi dr dxds , \quad   \forall \phi \in V,
\end{align*}
   \end{enumerate}
   \end{proposition}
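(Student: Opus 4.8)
The plan is to take $V^N(t):=\E[f^N(t)e_g(t)]$ as a \emph{definition} and to verify the three requirements directly, using two facts about the exponential martingale: $e_g(0)=1$, and $e_g$ has finite moments of every order, with $\E[e_g(s)^2]=\exp\big(\sum_{k\in M_n}\int_0^s|g_k|^2\,d\tau\big)\le C_g$ uniformly in $s\in[0,T]$ and $\E[\sup_{s\le T}e_g(s)^p]<\infty$ for all $p\ge1$ (Doob and Burkholder--Davis--Gundy). Since $g\in G_n$ involves only finitely many $g_k\in L^2(0,T)$, all these constants are finite.

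For the regularity (1), I would first bound $\|V^N(t)\|_H\le\big(\E\|f^N(t)\|_H^2\big)^{1/2}\big(\E e_g(t)^2\big)^{1/2}$ by Cauchy--Schwarz in $\omega$; combined with $f^N\in L^2_{w-*}(\Omega;L^\infty([0,T];H))$ from \autoref{TH1} this gives $V^N\in L^\infty([0,T];H)$. Because $\nabla_r$ is a closed operator commuting with the scalar $e_g(s)$, a duality check shows $\nabla_r V^N(s)=\E[\nabla_r f^N(s)\,e_g(s)]$ in the distributional sense, and the same estimate together with $\nabla_r f^N\in L^2(\Omega\times[0,T];H)$ yields $\int_0^T\|\nabla_r V^N(s)\|_H^2\,ds\le C_g\,\E\int_0^T\|\nabla_r f^N(s)\|_H^2\,ds<\infty$. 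Finally $V^N\in C_w([0,T];H)$: for fixed $\phi\in H$, the path $s\mapsto(f^N(s),\phi)_He_g(s)$ is $P$-a.s.\ continuous (point (2) of \autoref{Def-sol}) and dominated by $\sup_s\|f^N(s)\|_H\,\|\phi\|_H\,\sup_s e_g(s)\in L^1(\Omega)$, so dominated convergence makes $s\mapsto(V^N(s),\phi)_H$ continuous.

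To derive the equation (2), fix a test function $\phi\in V$ (admissible by the Remark after \autoref{Def-sol}) and apply the finite-dimensional It\^o product formula to the two continuous real semimartingales $t\mapsto(f^N(t),\phi)$ and $t\mapsto e_g(t)$. Point (3) of \autoref{Def-sol} exhibits $(f^N(t),\phi)$ with martingale part $\sum_{k\in K}\int_0^t\big[(f^N,\sigma_k^N\!\cdot\!\nabla_x\phi)+(f^N,(\nabla\sigma_k^N r)\!\cdot\!\nabla_r\phi)\big]\,dW^k$, while $de_g=\sum_{k\in M_n}g_k e_g\,dW^k$. The product rule produces the drift of $(f^N,\phi)$ multiplied by $e_g$, two martingale integrals, and the joint bracket $d\langle(f^N,\phi),e_g\rangle$, which by $d\langle W^k,W^{k'}\rangle=\delta_{kk'}\,dt$ is supported on the indices with both $\theta_k^N\ne0$ and $g_k\ne0$, i.e.\ $k\in K_n$. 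Integrating on $[0,t]$ and taking expectation, the martingale integrals vanish, each linear pairing $\E[e_g(f^N,\cdot)]$ collapses to the corresponding pairing of $V^N$, and the bracket term reassembles into the combinations $h_n=\sum_{k\in K_n}g_k\sigma_k^N$ and $y_n=\sum_{k\in K_n}g_k(\nabla\sigma_k^N r)$, producing exactly the modified transport coefficients $u_L-h_n$ and $\nabla u_L r-y_n$ of the stated weak equation. (Equivalently, $V^N(t)=\E_{\tilde P}[f^N(t)]$ for the tilted measure $d\tilde P=e_g(T)\,dP$, under which the shell noises acquire Girsanov drifts that shift the transport terms.)

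The heart of the matter is to justify these formal manipulations. Three points require care. First, the two stochastic integrals must be genuine martingales so that their expectations vanish; here the higher-moment bound of \autoref{Lemma10} on $f^N$ together with $\E[\sup_s e_g(s)^p]<\infty$, inserted through H\"older and Burkholder--Davis--Gundy, control $\E\int_0^T e_g^2\big[(f^N,\sigma_k^N\!\cdot\!\nabla_x\phi)+\cdots\big]^2\,ds$. Second, one invokes Fubini repeatedly to interchange $\E$ with the time integral and with the spatial pairings, and to identify $\E[e_g(f^N,\cdot)]=(V^N,\cdot)$; the integrability needed is exactly what was established for (1). The main obstacle, however, is the weighted-space bookkeeping: the diffusion pairing $(A_k^N\nabla_r f^N,\nabla_r\phi)$ and the several terms carrying an explicit factor $r$ grow like $|r|^2$, so their finiteness and the commutation with $\E$ must be tracked in $H=L^2_{r,2}$ against test functions $\phi\in V=H^2_{r,4}$, using the structure of $A(r)$ from \autoref{lem-matrix-r}. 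This is precisely the weighted-space difficulty highlighted in the introduction, and once it is handled the identity (2) holds for every $\phi\in V$.
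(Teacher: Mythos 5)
Your proposal is correct in substance but takes a genuinely different route from the paper. The paper does not manipulate $f^N$ directly: it returns to the Galerkin level, applies the It\^o product formula to $e_g(t)f_m(t)$ where $f_m$ solves the finite-dimensional system \eqref{approx-linear*SDE}, takes expectations to obtain the deterministic linear system \eqref{eqn-Vm} for $V_m=\E[e_g f_m]$ (with the projected corrector $\mathcal{Y}^m$ still present), proves estimates on $V_m$ uniform in $m$ in \autoref{Lemma9-1} (the new drifts $h_n,y_n$ being absorbed into the Gr\"onwall exponent $\overline\lambda(t)$), and then passes to the limit $m\to\infty$ by re-running the argument of the existence proof, identifying $\widetilde V=\E[e_g\widetilde f]$ by linearity of the expectation. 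You instead define $V^N=\E[f^Ne_g]$ outright and apply the scalar It\^o product rule to the real continuous semimartingales $(f^N(\cdot),\phi)$ and $e_g$, using the decomposition supplied by point (3) of \autoref{Def-sol}. This is legitimate: the product rule for two real continuous semimartingales needs no smoothness of $f^N$, and the integrability you invoke (\autoref{Lemma10} together with all moments of $\sup_{s\le T}e_g(s)$) does make the stochastic integrals true martingales and justifies the Fubini exchanges in the weighted pairings. Your argument is therefore shorter and avoids the second compactness step; what the paper's detour buys is that It\^o calculus is only ever used in finite dimensions (consistent with the authors' stated caution about applying it to the non-smooth $f^N$) and that the quantitative bound \eqref{borne-uniq} on $V^N$ falls out of the construction, whereas in your approach it must be extracted separately from the bounds on $f^N$, as your Cauchy--Schwarz argument does.

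One caveat on signs: with the conventions of \autoref{Def-sol}(3), the covariation is
$d\langle(f^N,\phi),e_g\rangle=\sum_{k}g_k e_g\bigl[(f^N,\sigma_k^N\cdot\nabla_x\phi)+(f^N,(\nabla\sigma_k^Nr)\cdot\nabla_r\phi)\bigr]dt$
with a plus sign, so your recipe yields the coefficients $u_L+h_n$ and $\nabla u_L r+y_n$ rather than the $u_L-h_n$, $\nabla u_L r-y_n$ written in the statement (the same sign question arises in the paper's own bracket computation). This is immaterial downstream, since in \autoref{lem-uniq-Vn} the terms $h_n$ and $y_n$ are only estimated in absolute value, but you should not assert that the covariation reproduces the stated coefficients "exactly" without checking the sign.
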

    \begin{remark}
       A    priori, the last    point   holds   for any $\phi   \in U$  and by  taking  into    account the regularity  of  the solution    $V^N$,    it  holds   as  well    for all $\phi   \in V.$
   \end{remark}
 
  \subsubsection*{Proof of  \autoref{prop-exisence-mean}}
  Let $g\in   G_n$,   by  using   It\^o formula to the
product,    we  get
\begin{align*}
d[e_g(t)f_m(t)]&+e_g(t)P_m\Div_x(u_Lf_m(t))dt+e_g(t)P_m\Div_r((\nabla u_Lr-\dfrac{1}{\beta}r)f_m(t)) dt\\	&=\sum_{k\in K}e_g(t)P_m(\sigma_k^N\cdot\nabla_xP_m[(\nabla\sigma_k^Nr).\nabla_rf_m])dt+	\sigma^2e_g(t)P_m\Delta_rf_m(t)dt\notag\\&-\sum_{k\in K}e_g(t)P_m\sigma_k^N.\nabla_xf^N(t)dW^k -\sum_{k\in K}e_g(t)P_m(\nabla \sigma_k^Nr).\nabla_rf_m(t) dW^k\notag\\
&+\alpha_N	e_g(t)P_m\Delta_xf_m(t)dt+ \dfrac{1}{2}e_g(t)P_m\Div_r(\sum_{k\in K}\left((\nabla \sigma_k^Nr) \otimes (\nabla \sigma_k^Nr)\right) \nabla_rf_m(t))dt,\notag\\
&+\sum_{k\in M_n}f_m(t)g_k(t)e_g(t)dW^k(t)\\
&- [\sum_{k\in M_n}g_k(t)e_g(t)dW^k(t),\sum_{k\in K}e_g(t)P_m\sigma_k^N.\nabla_xf^N(t)+e_g(t)P_m(\nabla \sigma_k^Nr).\nabla_rf_m(t) dW^k]\\
e_g(t)f_m|_{t=0}&=P_mf_0.\notag
\end{align*}
Denote  $K_n=\{k\in K: \min(n,N) \leq    \vert   k\vert  \leq    \max(2N,n)\}$  and
set $V_m(t)=\E(f_m(t)e_g(t))$   then    
\begin{align*}
d[V_m(t)]&+P_m\Div_x(u_LV_m(t))dt+P_m\Div_r((\nabla u_Lr-\dfrac{1}{\beta}r)V_m(t)) dt\\	&=\sum_{k\in K}P_m(\sigma_k^N\cdot\nabla_xP_m[(\nabla\sigma_k^Nr).\nabla_rV_m(t)])dt+	\sigma^2P_m\Delta_rV_m(t)dt\notag\\
&+\alpha_N	P_m\Delta_xV_m(t)dt+ \dfrac{1}{2}P_m\Div_r(\sum_{k\in K}\left((\nabla \sigma_k^Nr) \otimes (\nabla \sigma_k^Nr)\right) \nabla_rV_m(t))dt,\notag\\
&- \sum_{k\in K_n}P_mg_k\sigma_k^N.\nabla_xV_m(t)+P_mg_k(\nabla \sigma_k^Nr).\nabla_rV_m(t)\\
V_m|_{t=0}&=P_mf_0.\notag
\end{align*}
  Denote    $\displaystyle\sum_{k\in K_n}g_k\sigma_k^N=h_n$    and $\displaystyle\sum_{k\in K_n}g_k(\nabla \sigma_k^Nr)=y_n$.  
Then    $V_m$   satisfies 
\begin{align}
\dfrac{d   V_m}{dt}
&+P_m\Div_x([u_L-h_n]V_m(t))+P_m\Div_r([(\nabla u_Lr)-y_n]-\dfrac{1}{\beta}r)V_m(t))\label{eqn-Vm}\\	
&=\sum_{k\in K}P_m(\sigma_k^N\cdot\nabla_xP_m[(\nabla\sigma_k^Nr).\nabla_rV_m])+	\sigma^2P_m\Delta_rV_m(t)\notag\\
&+\alpha_N	P_m\Delta_xV_m(t)+ \dfrac{1}{2}P_m\Div_r(\sum_{k\in K}\left((\nabla \sigma_k^Nr) \otimes (\nabla \sigma_k^Nr)\right) \nabla_rV_m(t)),\notag\\
V_m|_{t=0}&=P_mf_0.\notag
\end{align}
 \eqref{eqn-Vm}  is  linear  system  of  ODE,    by  using   a   classical   results  (see    e.g.    \cite[Chapter   V]{Oksendal2013})  we  get
\begin{lemma}
    There   exists  a   unique  $V_m\in C([0,T];H_m)$  to   \eqref{eqn-Vm}.
\end{lemma}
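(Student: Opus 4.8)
The plan is to exploit that the unknown is sought in the finite-dimensional space $H_m$, so that \eqref{eqn-Vm} collapses to a linear ODE system for the coordinates in the basis $\{w_1,\dots,w_m\}$. First I would make the ansatz $V_m(t)=\sum_{j=1}^m c_j(t)w_j$ and test \eqref{eqn-Vm} against each $w_i$, $1\le i\le m$, in the $H$-inner product. Since $P_m$ is the $(\cdot,\cdot)_H$-orthogonal projection onto $H_m$ and $\{w_i\}$ is orthonormal in $H$, one has $(P_mX,w_i)_H=\langle X,w_i\rangle$ for each $U'$-valued summand $X$ on the right-hand side, while $\tfrac{d}{dt}(V_m,w_i)_H=c_i'(t)$. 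Because both sides of \eqref{eqn-Vm} lie in $H_m$, testing against all $w_i$ is equivalent to the equation itself, so the problem becomes the homogeneous linear system $\dot c(t)=M(t)\,c(t)$ with $c=(c_1,\dots,c_m)$ and initial datum $c_i(0)=(f_0,w_i)_H$ coming from $V_m|_{t=0}=P_mf_0$.

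Next I would check that the matrix $M(t)=(M_{ij}(t))$ is well defined with integrable entries. Each basis element $w_j$ lies in $U\hookrightarrow V=H^2_{r,4}(\T^2\times\R^2)$, so all the first- and second-order differential operators occurring in \eqref{eqn-Vm}---the two transport terms, $\sigma^2\Delta_r$, $\alpha_N\Delta_x$, the corrector $\tfrac{1}{2}\Div_r(A_k^N\nabla_r\cdot)$ and the mixed term $\sigma_k^N\cdot\nabla_xP_m[(\nabla\sigma_k^Nr)\cdot\nabla_r\cdot]$---map $w_j$ into $U'$, and their pairings with $w_i$ are finite. Concretely $M_{ij}(t)$ is a finite sum of integrals such as $-\int_{\T^2\times\R^2}[u_L(t)-h_n(t)]\,w_j\cdot\nabla_xw_i\,dxdr$ together with analogous weighted expressions. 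The sums over $k$ are finite (only $N\le|k|\le 2N$ contribute) and the $\sigma_k^N$ are smooth and bounded, so the contributions of $\sigma^2\Delta_r$, $\alpha_N\Delta_x$, the $-\tfrac{1}{\beta}r$ drift and the two covariance-corrector terms are constant in time, while the $u_L(t)$-terms are continuous in $t$ because $u_L\in C([0,T];C^2)$. The only rough time dependence enters through $h_n=\sum_{k\in K_n}g_k\sigma_k^N$ and $y_n=\sum_{k\in K_n}g_k(\nabla\sigma_k^Nr)$ with $g_k\in L^2(0,T)$; these contribute entries in $L^2(0,T)\subset L^1(0,T)$. Hence $M\in L^1(0,T;\R^{m\times m})$.

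Finally, since $\dot c=M(t)c$ is linear and homogeneous with $M\in L^1(0,T)$, the classical Carathéodory existence-and-uniqueness theorem for linear ODE systems (as in \cite[Chapter V]{Oksendal2013}) yields a unique absolutely continuous solution $c\in W^{1,1}(0,T;\R^m)$, whence $V_m(t)=\sum_j c_j(t)w_j\in C([0,T];H_m)$ with $V_m(0)=P_mf_0$. Because the problem is finite-dimensional and linear, there is essentially no analytical obstacle: the finite dimension makes the operators bounded on $H_m$ and linearity rules out blow-up, giving global existence on $[0,T]$. The only point deserving care is the merely $L^2$ (hence discontinuous) time dependence of $h_n$ and $y_n$, which precludes the naive Picard--Lindelöf statement for continuous coefficients but is fully covered by the Carathéodory framework.
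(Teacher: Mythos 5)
Your proposal is correct and follows essentially the same route as the paper, which simply observes that \eqref{eqn-Vm} is a finite-dimensional linear ODE system and cites classical well-posedness theory. Your additional care in verifying that the matrix entries coming from $h_n$ and $y_n$ are only $L^2(0,T)\subset L^1(0,T)$ in time, so that one must invoke the Carath\'eodory framework rather than Picard--Lindel\"of for continuous coefficients, is a detail the paper leaves implicit but is exactly the right point to check.
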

 \begin{lemma}\label{Lemma9-1}
 For every  $m\in   \mathbb{N}^*$, there   exists  a   unique   solution $V_m=\E[e_gf_m]\in C([0,T];H_m)$  to    \eqref{eqn-Vm}, which    satisfy    
\begin{align}\label{borne-uniq}
    \forall t\geq   0:\displaystyle\sup_{q\in[0,t]}\Vert  V_m(q)\Vert^2_H+4\sigma^2\int_0^t \Vert\nabla_rV_m(s)\Vert_H^2 \leq	2\Vert  f_0\Vert_H^2e^{\overline\lambda(t)	},
 \end{align}
 where
\begin{align*} \overline\lambda(t)=[2\Vert	\nabla_x u_L\Vert_{\infty}+\dfrac{8}{\beta}+4\sigma^2+2C]t+\int_0^t(\Vert  g(s)\Vert^2+1)ds<+\infty .
\end{align*}
\end{lemma}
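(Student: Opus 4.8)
The plan is to treat \eqref{eqn-Vm} as a linear, finite–dimensional, deterministic evolution equation in $H_m$ and to reproduce, at the deterministic level, the energy balance used for $f_m$ in \autoref{Lemma9}. Existence and uniqueness of $V_m\in C([0,T];H_m)$ are already granted by the preceding lemma (it is a linear ODE whose only time dependence, through $h_n,y_n$, comes from $g\in G_n$ with $g_k\in L^2(0,T)$), and by construction $V_m(t)=\E[f_m(t)e_g(t)]$; so the whole content of the statement is the a priori bound \eqref{borne-uniq}. First I would test \eqref{eqn-Vm} against $V_m$ in the inner product of $H$ — licit since $V_m$ is absolutely continuous with values in the finite–dimensional space $H_m$ — obtaining $\tfrac12\tfrac{d}{dt}\|V_m\|_H^2$ as a sum of the transport, diffusion, covariance–corrector and $\mathcal{Y}^m$ contributions.

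Next I would dispose of the benign terms exactly as in \autoref{Subsection-estimate-fm}. The $x$–transport $\big(P_m\Div_x([u_L-h_n]V_m),V_m\big)_H$ vanishes identically because $\Div_x u_L=\Div_x\sigma_k^N=0$ forces $\Div_x(u_L-h_n)=0$ and the weight $(1+\vert r\vert^2)$ is $x$–independent; in particular $h_n$ contributes nothing. For the $r$–transport, the $\nabla u_L r$ and $\tfrac1\beta r$ pieces are handled as in \eqref{reg-p-0} and the lines following it, producing terms $\lesssim(\|\nabla_x u_L\|_\infty+\tfrac1\beta)\|V_m\|_H^2$; the genuinely new piece is $\int V_m^2\,y_n\cdot r$, where the weight forces a factor $\vert r\vert^2$. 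Using $\vert(\nabla\sigma_k^N r)\cdot r\vert\le \theta_{\vert k\vert}^N\vert k\vert\,\vert r\vert^2=\tfrac{a}{\vert k\vert}\vert r\vert^2$ and Cauchy--Schwarz in $k$ together with $\sum_{N\le\vert k\vert\le 2N}\vert k\vert^{-2}\le C$, this is bounded by $C\|g(s)\|\,\|V_m\|_H^2\le \tfrac{C}{2}(\|g(s)\|^2+1)\|V_m\|_H^2$, which is precisely the origin of the $\int_0^t(\|g(s)\|^2+1)\,ds$ in $\overline\lambda$. The $\sigma^2\Delta_r$ term yields $-\sigma^2\|\nabla_r V_m\|_H^2$ plus a cross term $\lesssim\sigma^2\|V_m\|_H^2$; the $\alpha_N\Delta_x$ term yields $-\alpha_N\|\nabla_x V_m\|_H^2$; and the corrector $\tfrac12 P_m\Div_r(A_k^N\nabla_r V_m)$ yields $-\tfrac12\sum_k\|(\nabla\sigma_k^N r)\cdot\nabla_r V_m\|_H^2$ plus a cross term controlled by $C\|V_m\|_H^2$ exactly as in \eqref{reg-p-3--}.

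The key step is the $\mathcal{Y}^m$ term $\big(V_m,\mathcal{Y}^m V_m\big)_H$, which in \autoref{Lemma9} was cancelled by the martingale quadratic variation but which here, $V_m$ solving a deterministic equation, stands alone. Integrating by parts in $x$ (using $\Div_x\sigma_k^N=0$) and invoking \autoref{RMQ-proj} I would rewrite it as $-\sum_k\big(P_m[(\nabla\sigma_k^N r)\cdot\nabla_r V_m],\,\sigma_k^N\cdot\nabla_x V_m\big)_H$; by the contraction property of $P_m$ and Young's inequality this is bounded in absolute value by $\tfrac12\sum_k\|(\nabla\sigma_k^N r)\cdot\nabla_r V_m\|_H^2+\tfrac12\sum_k\|\sigma_k^N\cdot\nabla_x V_m\|_H^2$. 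Since $\sum_k\|\sigma_k^N\cdot\nabla_x V_m\|_H^2=2\alpha_N\|\nabla_x V_m\|_H^2$ and $\tfrac12\sum_k\|(\nabla\sigma_k^N r)\cdot\nabla_r V_m\|_H^2$ is exactly the corrector dissipation computed above, these two halves are absorbed, respectively, by the $\alpha_N\|\nabla_x V_m\|_H^2$ and $\tfrac12\sum_k\|(\nabla\sigma_k^N r)\cdot\nabla_r V_m\|_H^2$ produced by the $\alpha_N\Delta_x$ and $A_k^N$ diffusions. This absorption is the whole point of the definition \eqref{extra-ope-projection} of $\mathcal{Y}^m$, and it is the main obstacle: a naive direct estimate of $(V_m,\mathcal{Y}^m V_m)_H$ would require $\nabla_r V_m$ in the heavier weight $L^2_{r,4}$, which is not available, whereas here the free $\alpha_N$– and $A_k^N$–dissipations (unused, as there is no quadratic variation to cancel) exactly cover it.

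Collecting everything, only $-\sigma^2\|\nabla_r V_m\|_H^2$ survives among the quadratic–gradient contributions, giving $\tfrac{d}{dt}\|V_m\|_H^2+2\sigma^2\|\nabla_r V_m\|_H^2\le c(s)\|V_m\|_H^2$ with $c(s)$ of the form $2\|\nabla_x u_L\|_\infty+\tfrac8\beta+4\sigma^2+2C+(\|g(s)\|^2+1)$ (up to the generic constant $C$ absorbed into the coefficients of \eqref{const-Gronwall}). Grönwall's lemma then yields $\|V_m(t)\|_H^2+2\sigma^2\int_0^t\|\nabla_r V_m\|_H^2\le\|f_0\|_H^2 e^{\overline\lambda(t)}$ with $\overline\lambda$ as stated, and the asserted form \eqref{borne-uniq} (with $4\sigma^2$ and $2\|f_0\|_H^2$) follows from the trivial inequality $a+4b\le 2(a+2b)$ valid for $a,b\ge0$. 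Finiteness $\overline\lambda(t)<\infty$ is immediate since $g\in G_n$ involves only finitely many $g_k\in L^2(0,T)$. Everything outside the $\mathcal{Y}^m$ absorption of the previous paragraph is the deterministic shadow of the computation already carried out for $f_m$.
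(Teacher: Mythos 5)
Your argument is correct, and its overall skeleton coincides with the paper's: the paper's proof of \autoref{Lemma9-1} is essentially a sketch that writes down the deterministic energy identity for $V_m$, disposes of the two new transport terms ($h_n$ by $\Div_x h_n=0$, $y_n$ by the bound $\vert y_n\cdot r\vert\le \Vert g\Vert\,\vert r\vert^2$ leading to the $\int_0^t(\Vert g\Vert^2+1)\,ds$ contribution in $\overline\lambda$), and then defers everything else to ``similar arguments as in \autoref{Subsection-estimate-fm}''. Where you genuinely add something is the treatment of $2\int_0^t(V_m,\mathcal{Y}^mV_m)_H\,ds$: in \autoref{Subsection-estimate-fm} that term is cancelled \emph{exactly} against the off-diagonal part of the It\^o quadratic variation, a mechanism that simply does not exist for the deterministic equation \eqref{eqn-Vm}, so the paper's deferral is not literally applicable at this point. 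Your resolution --- rewrite $(V_m,\mathcal{Y}^mV_m)_H$ via \autoref{RMQ-proj} and an $x$-integration by parts as $-\sum_k\bigl(P_m[(\nabla\sigma_k^Nr)\cdot\nabla_rV_m],\sigma_k^N\cdot\nabla_xV_m\bigr)_H$, then absorb it by Young's inequality and $\Vert P_m\Vert_{L(H,H)}\le1$ into the now-unused dissipations $-2\alpha_N\Vert\nabla_xV_m\Vert_H^2$ and $-(A_k^N\nabla_rV_m,\nabla_rV_m)_H$ (which in \autoref{Subsection-estimate-fm} were instead consumed by the diagonal quadratic-variation terms) --- is exactly the right accounting: the budget closes with zero margin, leaving only the $-2\sigma^2\Vert\nabla_rV_m\Vert_H^2$ dissipation needed for \eqref{borne-uniq}, and it reproduces the constant $2C$ (rather than the $4C$ of \eqref{const-Gronwall}, whose extra $C/\epsilon$ came from the Burkholder--Davis--Gundy step that is absent here). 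So the proposal is a correct and slightly more complete version of the paper's argument; the remaining steps (existence/uniqueness from the linear ODE lemma, the identification $V_m=\E[e_gf_m]$, Gr\"onwall, and the factor-two adjustment $a+4b\le2(a+2b)$) match the paper.
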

\begin{proof}The proof consists of arguments analogous to the proof of  \autoref{Lemma9}    but for the reader's convenience, let   us sketch it.
By  applying    It\^o formula with  $\Vert  \cdot\Vert_H^2$  to  the process $V_m$    satisfying  \eqref{eqn-Vm},	 we  obtain  
\begin{align*}&\hspace*{-1cm}\displaystyle\Vert  V_m(t)\Vert^2_H-\Vert  P_mf_0\Vert_H^2 \\&\hspace*{-1cm}=2\int_0^t(V_m(s),u_L\cdot \nabla_x V_m(s))_Hds-2\int_0^t(V_m(s),h_n\cdot \nabla_x V_m(s))_Hds-2\int_0^t((\nabla_x u_Lr)\cdot\nabla_rV_m(s),V_m(s))_H\\&+2\int_0^t(V_m(s),y_n\cdot \nabla_r V_m(s))_Hds
+ \int_0^t\dfrac{2}{\beta}\Vert  V_m(s)\Vert^2_H ds+\dfrac{1}{\beta}\int_0^t(r\cdot\nabla_rV_m(s) ,V_m(s) )_Hds\notag\\&
+2\sum_{k\in K}\int_0^t(V_m(s),\sigma_k^N\cdot\nabla_xP_m[(\nabla\sigma_k^Nr))\nabla_rV_m(s)])_Hds\\&-	2\int_0^t[\sigma^2 \Vert\nabla_rV_m(s)\Vert_H^2+2\sigma^2 (\nabla_rV_m(s),rV_m(s))+\alpha_N\Vert\nabla_xV_m(s)\Vert_H^2]ds\notag\\
&-\int_0^t (A_k^N(x,r)\nabla_rV_m(s),\nabla_rV_m(s))_H ds-2\int_0^t (A_k^N(x,r)\nabla_rV_m(s),rV_m(s)) ds\notag
\end{align*}
The last    
equation    has        similar terms as  \eqref{Ito-form-1-} (without the stochastic  integrals and It\^o   correctors)    but with    the   two  new  terms   
\begin{align*}
    2\int_0^t(V_m(s),h_n\cdot \nabla_x V_m(s))_Hds+2\int_0^t(V_m(s),y_n\cdot \nabla_r V_m(s))_Hds.
\end{align*}
By  noticing  that  
$\Div_x(h_n)=0$,    the first   term    vanishes.   Concerning  the second  one,   note that    $
     \vert  y_n\vert      \leq  \vert   r\vert    \displaystyle\sum_{    \vert   k\vert  \leq    n}\vert   g_k\vert=\vert   r\vert  \Vert  g\Vert
$
and 
  \begin{align*}
      \int_0^t(V_m(s),y_n\cdot \nabla_r V_m(s))_Hds=-\displaystyle\sum_{k\in K_n}  (V_m(s),g_k(\nabla \sigma_k^Nr)\cdot r V_m(s))ds,
  \end{align*}
    which   ensures that
   $
      \displaystyle\int_0^t\vert    (V_m(s),y_n\cdot \nabla_r V_m(s))_H\vert    ds\leq \int_0^t(\Vert  g(s)\Vert^2+1)\Vert	V_m(s)\Vert_H^2ds.   
    $  Thus, the other   terms   can be  estimated   by  similar arguments as in    
\autoref{Subsection-estimate-fm}  and   we  obtain \autoref{Lemma9-1}.
\end{proof}
By  using   \autoref{Lemma9-1},      we  are able    to  get the following   convergences (up to a subsequence)
  \begin{align}
 	V_m \rightharpoonup \widetilde{V} &\text{		in	}  L^2([0, T];H),\label{cv-appro-3-2}\\
 	 V_m \rightharpoonup^* \widetilde{V} &\text{		in	}  L^\infty([0, T];H),\label{cv-appro-4-2}\\
   \nabla_rV_m \rightharpoonup \nabla_r\widetilde{V} &\text{		in	}  L^2([0, T];H),\label{cv-appro-5-2}
 	\end{align}
  moreover, by  using   the linearity   of  the expectation we  get $\widetilde{V}=\E[e_g\widetilde{f}]$   (recall that    $V_m=\E(e_gf_m)$    and $e_g\in L^2(\Omega)$).   Now,    we  have    all the ingredients  in  hand    to  argue  as  in  \autoref{Subsection-exist-proof}    and obtain   \autoref{prop-exisence-mean}.
  
  \begin{lemma}\label{lem-uniq-Vn}
   Let   $N\in   \mathbb{N}^*$.   Then, the   solution    $V^N$ given   by  \autoref{prop-exisence-mean}    is  unique.
  \end{lemma}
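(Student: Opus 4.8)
The plan is to exploit the linearity of the mean equation. Let $V_1^N,V_2^N$ be two solutions in the sense of \autoref{prop-exisence-mean} with the same datum $f_0$, and set $V:=V_1^N-V_2^N$; then $V$ is a weak solution of the \emph{same} equation with $V|_{t=0}=0$, enjoying $V\in L^\infty(0,T;H)\cap C_w(0,T;H)$ and $\nabla_rV\in L^2(0,T;H)$, and the goal is to prove $V\equiv 0$. The decisive observation is that, although the natural energy space is $H=L^2_{r,2}$, the uniqueness estimate should be carried out in the \emph{unweighted} space $L^2(\T^2\times\R^2)$: this is precisely the regime in which the linear-in-$r$ growth of the drift coefficients $\nabla u_L\,r$ and $y_n=\sum_{k\in K_n}g_k(\nabla\sigma_k^Nr)$ is compensated by the single weight available through $\nabla_rV\in H$, i.e. $|r|\,\nabla_rV\in L^2$. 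Since $V$ carries no regularity in $x$ (the equation is hyperbolic in $x$), I would first regularise in $x$ only, setting $V_\delta:=\rho_\delta*_xV$ with the radially symmetric kernel $\rho_\delta$ of \autoref{Section1-desciption}; testing the weak formulation of \autoref{prop-exisence-mean} against $\rho_\delta*_x\phi$ yields a closed equation for $V_\delta$, strong in $x$ and weak in $r$, of the form
\begin{align*}
\partial_tV_\delta+(u_L-h_n)\cdot\nabla_xV_\delta+\mathcal{R}^x_\delta+\Div_r\big((\nabla u_L\,r-y_n-\tfrac1\beta r)V_\delta\big)+\mathcal{R}^r_\delta=\sigma^2\Delta_rV_\delta+\alpha_N\Delta_xV_\delta+\tfrac12\Div_r(A\nabla_rV_\delta),
\end{align*}
where $\mathcal{R}^x_\delta=[\rho_\delta*_x,(u_L-h_n)\cdot\nabla_x]V$ and $\mathcal{R}^r_\delta$ collects the commutators generated by the $x$-dependent coefficients of the $r$-transport. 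Here I use that the three diffusion operators commute with $\rho_\delta*_x$: $\sigma^2\Delta_r$ and $\alpha_N\Delta_x$ have constant coefficients, and by space homogeneity the matrix $A_k^N=\sum_{k\in K}(\nabla\sigma_k^Nr)\otimes(\nabla\sigma_k^Nr)$ depends only on $r$, see \autoref{lem-matrix-r}.

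Next I would test this equation with $V_\delta$ in the plain $L^2$ inner product. Since $\Div_x(u_L-h_n)=0$ and $\Div_r(\nabla u_L\,r-y_n)=\mathrm{tr}(\nabla u_L)-\Div_r y_n=0$, both transport terms vanish after integration by parts in $x$, respectively $r$; the remaining drift $-\tfrac1\beta\Div_r(rV_\delta)$ contributes only $\tfrac1\beta\|V_\delta\|_{L^2}^2$; and the diffusion terms yield
\begin{align*}
-\sigma^2\|\nabla_rV_\delta\|_{L^2}^2-\alpha_N\|\nabla_xV_\delta\|_{L^2}^2-\tfrac12(A\nabla_rV_\delta,\nabla_rV_\delta)\le 0,
\end{align*}
using $A(r)\ge0$ from \autoref{lem-matrix-r}, so that they may be discarded. (Note that, in contrast with the pathwise difficulties flagged around \eqref{Problem-terms}, no Itô correction competes with $\alpha_N\Delta_x$ here, which is now a genuine, discardable dissipation.) Thus everything reduces to the two commutator terms.

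For $\mathcal{R}^x_\delta$ this is the classical DiPerna--Lions commutator used in this circle of ideas, see \cite{FlaOli2018,NevesOlivera}: since $u_L\in C([0,T];C^2)$ and the $\sigma_k^N$ are trigonometric, $u_L-h_n$ is Lipschitz in $x$ with $\|u_L-h_n\|_{C^1}\le C(1+\|g(t)\|)$, whence $\mathcal{R}^x_\delta\to0$ in $L^2(\T^2\times\R^2)$ for a.e. $t$ while being dominated by $C(1+\|g(t)\|)\|V\|_{L^2}\in L^2(0,T)$, so that $\int_0^t(\mathcal{R}^x_\delta,V_\delta)_{L^2}\,ds\to0$ by dominated convergence. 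For $\mathcal{R}^r_\delta$, writing it as $\int\rho_\delta(x-z)[b(z,r)-b(x,r)]\cdot\nabla_rV(z,r)\,dz$ with $b\in\{\nabla u_L\,r,\,y_n\}$, and using that each such $b$ is Lipschitz in $x$ with constant bounded by $C(1+\|g(t)\|)|r|$ (the factor $|k|$ from differentiating $\cos k\cdot x,\sin k\cdot x$ being absorbed by $\theta^N_{|k|}=a/|k|^2$, exactly as in \autoref{Subsection-estimate-fm}), I obtain $|\mathcal{R}^r_\delta|\le C\delta(1+\|g(t)\|)\,|r|\,(\rho_\delta*_x|\nabla_rV|)$. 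Pairing in $L^2$ and moving the $x$-independent factor $|r|$ through the convolution gives
\begin{align*}
|(\mathcal{R}^r_\delta,V_\delta)_{L^2}|\le C\delta(1+\|g(t)\|)\,\big\||r|\nabla_rV\big\|_{L^2}\|V_\delta\|_{L^2}\le C\delta(1+\|g(t)\|)\,\|\nabla_rV\|_{H}\,\|V_\delta\|_{L^2},
\end{align*}
which tends to $0$ after integration in time. This estimate is the crux and the main obstacle: it closes only because the energy is measured in \emph{unweighted} $L^2$, so that the single power of $|r|$ produced by the commutator is matched exactly by the one weight carried by $\nabla_rV\in H=L^2_{r,2}$; had one worked directly in $H$, the same commutator would have demanded $\nabla_rV\in L^2_{r,4}$, which is unavailable. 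Collecting the bounds, using $V_\delta(0)=0$, and abbreviating by $\varepsilon_\delta(t)\to0$ as $\delta\to0$ the sum of the two commutator contributions, a standard energy identity gives
\begin{align*}
\|V_\delta(t)\|_{L^2}^2\le \frac{2}{\beta}\int_0^t\|V_\delta(s)\|_{L^2}^2\,ds+\varepsilon_\delta(t).
\end{align*}
Letting $\delta\to0$ (with $V_\delta\to V$ in $L^2$ and lower semicontinuity of the norm) and applying Grönwall's lemma yields $\|V(t)\|_{L^2}=0$ for all $t$, hence $V_1^N=V_2^N$.
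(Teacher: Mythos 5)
Your proposal is correct and follows essentially the same route as the paper: regularize the difference of two solutions in $x$ only with the radially symmetric kernel $\rho_\delta$, test in the \emph{unweighted} $L^2(\T^2\times\R^2)$ inner product, kill the transport terms via DiPerna--Lions-type commutators (with the $|r|$ produced by the $r$-drift commutators absorbed by $\nabla_rV\in H=L^2_{r,2}$), discard the nonpositive diffusion contributions, and conclude by Grönwall. The only deviation is your handling of the degenerate $r$-diffusion: you observe that $\sum_{k}(\nabla\sigma_k^Nr)\otimes(\nabla\sigma_k^Nr)$ is $x$-independent (\autoref{lem-matrix-r}), so it commutes with $\rho_\delta*_x$ and the term is directly a discardable nonnegative quadratic form, whereas the paper runs mode-by-mode commutator estimates ($J^1_\delta$, $J^2_\delta$) to reach the same nonpositive limit --- your shortcut is valid and slightly cleaner.
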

  \subsubsection*{Proof of  \autoref{lem-uniq-Vn}}
  Let $V^N_1$   and $V^N_2$ be two solutions,  with  the same    initial data, given   by  \autoref{prop-exisence-mean} and denote  by   $V^N$  their difference.   Then
    for any $t\in ]0, T]$   and $\phi\in    V$, we  have
		\begin{align}
&\int_{\mathbb{T}^2}\int_{\mathbb{R}^2}   V^N(t)\phi   dxdr\label{eqn-regularize}\\
&=\int_0^t\int_{\mathbb{T}^2}\int_{\mathbb{R}^2}  V^N(x,r,s)\left([u_L(x,s)-h_n]\cdot \nabla_x \phi + ([\nabla u_L(s,x)r-y_n]-\dfrac{1}{\beta}r) \cdot\nabla_r \phi \right) dr dxds \notag\\
&-\int_0^t \sigma^2\int_{\mathbb{T}^2}\int_{\mathbb{R}^2}\nabla_r V^N(x,r,s) \cdot\nabla_r \phi  dr dxds+\int_0^T\alpha_N\int_{\mathbb{T}^2}\int_{\mathbb{R}^2}V^N(x,r,s) \cdot\Delta_x\phi  dr dxds\notag\\
&-\int_0^t\dfrac{1}{2}\int_{\mathbb{T}^2}\int_{\mathbb{R}^2}(\sum_{k\in K}\left((\nabla \sigma_k^Nr) \otimes (\nabla \sigma_k^Nr)\right) \nabla_r V^N(x,r,s))\cdot\nabla_r \phi dr dxds.\notag
\end{align}
  Since   the above   equation    is  understood    in  weak    form,   we  need    first   to  consider    an  appropriate  regularization  of   $V^N$,  denoted by  $[V^N]_\delta$.  Then,   take    the $L^2$-inner   product of  the above   equation    with  $[V^N]_\delta$   and finally    pass    to  the limit   with    respect to  the regularization  parameters $\delta$.  
    \subsubsection*{Step  1:  Regularization}

Let  $\varphi\in C^\infty_c(\T^2\times  \R^2)$, if  we  denote  $X=(x,r)\in \T^2\times  \R^2$   and $\rho_{\delta}(X)=\rho_\delta(x)$,
then    $\varphi_{\delta}:=\rho_{\delta}*\varphi$   is  an  appropriate test    function    in  \eqref{eqn-regularize},  namely  we  get
	\begin{align*}
&( V^N(t),\varphi_{\delta})\\
&=\int_0^t\int_{\mathbb{T}^2}\int_{\mathbb{R}^2}  V^N(s)\left([u_L(s)-h_n]\cdot \nabla_x \varphi_{\delta} + ([\nabla u_L(s)r-y_n]-\dfrac{1}{\beta}r) \cdot\nabla_r \varphi_{\delta} \right) dr dxds \notag\\
&-\int_0^t \sigma^2\int_{\mathbb{T}^2}\int_{\mathbb{R}^2}\nabla_r V^N(s) \cdot\nabla_r \varphi_{\delta}  dr dxds+\int_0^T\alpha_N\int_{\mathbb{T}^2}\int_{\mathbb{R}^2}V^N(s) \cdot\Delta_x\varphi_{\delta}  dr dxds\notag\\
&-\int_0^t\dfrac{1}{2}\int_{\mathbb{T}^2}\int_{\mathbb{R}^2}(\sum_{k\in K}\left((\nabla \sigma_k^Nr) \otimes (\nabla \sigma_k^Nr)\right) \nabla_r V^N(s))\cdot\nabla_r \varphi_{\delta} dr dxds.\notag
\end{align*}
Since   $\rho$  is  radially    symmetric   then    the operator of convolution with
 $\rho_{\delta}$  is  self-adjoint    on $L^2 (\T^2 \times \R^2)$.    Thus  
 \begin{align*}
&( [V^N(t)]_\delta,\varphi)+\int_0^t  \left([(u_L(s)-h_n)\cdot \nabla_xV^N(s)]_\delta+ +[\Div_r\big( (\nabla u_L(s)r-y_n-\dfrac{1}{\beta}r) V^N(s)\big)]_\delta, \varphi \right) ds \notag\\
&=\int_0^t \sigma^2([\Delta_r V^N(s)]_\delta,  \varphi)  ds+\int_0^T\alpha_N([\Delta_xV^N(s)]_\delta,\varphi) ds\notag\\
&+\int_0^t\dfrac{1}{2}[\Div_r(\sum_{k\in K}\left((\nabla \sigma_k^Nr) \otimes (\nabla \sigma_k^Nr)\right) \nabla_r V^N(s))]_\delta, \varphi)ds=\int_0^t([d(s)]_\delta,\varphi)ds.\notag
\end{align*} Consider
    the following   space   $X:=\{\varphi\in    H;   \nabla_r\varphi\in    H \}$    and note    that    $X\hookrightarrow L^2(\T^2\times\R^2)\hookrightarrow    X^\prime  $ is  Gelfand triple.   Since   $$V^N \in L^\infty([0, T];H), \nabla_rV^N\in L^2( [0, T];H),$$    and by  using   the regularization  properties  of  $\rho$, one gets    that  $[d(\cdot)]_{\delta}\in    L^2([0, T];X^\prime)$. therefore,  we  can set 
 $\varphi=[V^N(\cdot)]_\delta$ to  get 
   \begin{align}&\dfrac{1}{2} \Vert [V^N(t)]_{\delta}\Vert^2+\int_0^t  \left([(u_L(s)-h_n)\cdot \nabla_xV^N(s)]_\delta+ [\Div_r\big( (\nabla u_L(s)r-y_n-\dfrac{1}{\beta}r) V^N(s)\big)]_\delta, [V^N(s)]_\delta \right) ds \label{Vn-regularized-uniq}\\
&=\int_0^t \sigma^2([\Delta_r V^N(s)]_\delta,  [V^N(s)]_\delta)  ds+\int_0^T\alpha_N([\Delta_xV^N(s)]_\delta,[V^N(s)]_\delta) ds\notag\\
&+\int_0^t\dfrac{1}{2}[\Div_r(\sum_{k\in K}\left((\nabla \sigma_k^Nr) \otimes (\nabla \sigma_k^Nr)\right) \nabla_r V^N(s))]_\delta, [V^N(s)]_\delta)ds.\notag
\end{align}
   Now,  let   us    pass    to  the limit   in  the last   equality  \eqref{Vn-regularized-uniq}.
   The  proof   is  based   on  commutator  estimates   and using   the properties  of  $(\sigma_k)_{k\in   K}.$

\subsubsection*{Step    2:  Passage  to  the limit   as  $\delta\to0$}
   First, recall    that   $  V^N(\cdot)\in   L^2(\T^2\times  \R^2)$, by  properties  of  convolution product,    we  get    $\displaystyle\lim_{\delta\to    0}[V^N(r,\cdot)]_{\delta}= V^N(r,\cdot)$ in  $L^2(\T^2)$    uniformly   for a.e.    $r\in   \R^2$    and we  get  $\displaystyle\lim_{\delta\to   0}\Vert [V^N(t)]_{\delta}\Vert^2=\Vert V^N(t)\Vert^2.$
   Next, we  will    prove   the following
    \begin{align}\label{eqn.delta-1-2-*}
\displaystyle\lim_{\delta}\int_0^t\langle [u_L(s)\cdot \nabla_xV^N(s)]_{\delta}, [V^N(s)]_{\delta}\rangle   ds=0.
    \end{align}

Since   $\Div_xu_L=0$,  we  get
\begin{align*}
    &\int_0^t\langle [u_L(s)\cdot \nabla_xV^N(s)]_{\delta}, [V^N(s)]_{\delta}\rangle   ds\\
    &=\int_0^t\langle   \rho_{\delta}*[u_L(s)\cdot \nabla_xV^N(s)]- u_L(s)\cdot \nabla_x\rho_{\delta}*[V^N(s)], \rho_{\delta}*[V^N(s)]\rangle   ds.
\end{align*}
Let us  introduce   the  commutator 
$
r_{\delta}(s)= \rho_{\delta}*[u_L(s)\cdot \nabla_xV^N(s)]-u_L(s)\cdot \nabla_x\rho_{\delta}*[V^N(s)].$  Thus,   
\eqref{eqn.delta-1-2-*} is  a   consequence  of the following:
   a.e.  $ s\in[0,T]$
\begin{align}
&
\left\Vert r_{\delta}(s)\right\Vert _{L^{2}(\T^2\times\R^2)}\leq C\left\Vert \nabla u_L(s)\right\Vert
_{\infty}\left\Vert V^N(s)\right\Vert _{L^{2}(\T^2\times\R^2)},   \label{lem-17-eqn1-2-*}%
\\
&\lim_{\delta}r_{\delta}(s)=0\text{ in    } L^{2}(\T^2\times\R^2)  \label{lem-17-eqn2-2-*},%
\end{align}
where   $C>0$   independent of  $\delta$.   Indeed, let us show \eqref{lem-17-eqn1-2-*}, note    that
\[
r_{\delta}\left(s,  x,r\right)  =-\int_{\R^2}\left(  u_L\left(  x,s\right)  -u_L\left(  y,s\right)
\right)  \cdot\nabla_{x}\rho_{\delta}\left(  x-y\right)  V^N\left(s,  y,r\right)  dy
\]%

Consider    the following   change  of  variables   $z=\dfrac{x-y}{\delta}$  to  get 
\begin{align*}
    r_{\delta}\left(s,  x,r\right)  &=-\int_{\R^2}  \dfrac{u_L\left(  y+\delta  z,s\right)  -u_L\left(  y,s\right)
}{\delta}  \cdot\nabla_{x}\rho\left(  z\right)  V^N\left(s,  y,r\right)  dz\\
&=-\int_{\R^2}  \int_0^1\nabla  u_L(y+\alpha \delta    z,s)zd\alpha  \cdot\nabla_{x}\rho\left(  z\right)  V^N\left(s,  y,r\right)  dz.
\end{align*}
Thus    $
  \vert  r_{\delta}\left(s,  x,r\right)\vert\leq \Vert    \nabla   u_L(s)\Vert_{\infty}\int_{\R^2}\vert z\vert  \vert\nabla_{x}\rho\left(  z\right)\vert  \vert    V^N\left(s,  y,r\right)\vert  dz.   
$
Since   $x=y+\delta z$,   we  get 
\begin{align*}
    \Vert   r_{\delta}(s)\Vert_{L^{2}(\T^2\times\R^2)}^2&\leq \Vert    \nabla   u_L(s)\Vert_{\infty}^2\int_{\R^2}\int_{\T^2}  (\int_{\R^2}\vert z\vert  \vert\nabla_{x}\rho\left(  z\right)\vert  \vert    V^N\left(s,  y,r\right)\vert  dz)^2dydr\\
    &\leq   \Vert    \nabla u_L(s)\Vert_{\infty}^2(\int_{\R^2}\int_{\T^2}  (\int_{\R^2} z  \cdot\nabla_{x}\rho\left(  z\right)  V^N\left(s,  y,r\right)  dz   )^2dydr\\&\leq   \Vert    \nabla  u_L(s)\Vert_{\infty}^2  \int_{\R^2} \vert   z\vert^2  \vert\nabla_{x}\rho\left(  z\right)\vert^2 dz (\int_{\R^2}\int_{\T^2}\vert V^N\left(s,  y,r\right)\vert^2     dydr,\\
    &\leq  C^2 \Vert    \nabla  u_L(s)\Vert_{\infty}^2\Vert V^N(s)\Vert_{L^{2}(\T^2\times\R^2)}^2,
\end{align*}
since    $\text{supp}[\rho]\subset    B(0,1)$  and    denoted by  $C^2=\int_{\R^2} \vert   z\vert^2  \vert\nabla_{x}\rho\left(  z\right)\vert^2 dz<+\infty.$  Concerning    \eqref{lem-17-eqn2-2-*},  we  have
\begin{equation*}
L^2(\T^2\times\R^2)\mbox{-}\lim_{\delta\rightarrow0}r_\delta(s)=-V^N(s)\left(\int_{\R^2}\nabla    u_L(s)z\cdot\nabla_x\rho(z)dz\right).
\end{equation*} Indeed, we  have
\begin{align*}
&\int_{\R^2}\int_{T^2}\vert \int_{\R^2}  \int_0^1\nabla u_L(y+\alpha \delta    z,s)zd\alpha  \cdot\nabla_{x}\rho\left(  z\right)  V^N\left(s,  y,r\right)  dz\\&\qquad\qquad\qquad\qquad-    V^N(s,x,r)\left(\int_{\R^2}\nabla  u_L(s,x)z\cdot\nabla_x\rho(z)dz\right)\vert^2dxdr\\
&=\int_{\R^2}\int_{T^2}\vert \int_{\R^2}  \int_0^1\nabla    u_L(y+\alpha \delta    z,s)zd\alpha  \cdot\nabla_{x}\rho\left(  z\right)  V^N\left(s,  y,r\right)  dz\\&\qquad\qquad\qquad\qquad-\int_{\R^2}V^N(s,x,r)\nabla    u_L(s,x)z\cdot\nabla_x\rho(z)dz\vert^2dxdr\\
&\leq\int_{\R^2}\int_{T^2} \int_{\R^2}  \int_0^1\vert   \nabla  u_L(y+\alpha \delta    z,s)z  \cdot\nabla_{x}\rho\left(  z\right)  V^N\left(s,  y,r\right)  dz-V^N(s,x,r)\nabla u_L(s,x)z\cdot\nabla_x\rho(z)\vert^2d\alpha  dzdxdr\\
&\leq\int_{\R^2} \int_{\R^2} \int_{T^2} \int_0^1\vert   \nabla  u_L(y+\alpha \delta    z,s)  V^N\left(s,  y,r\right)  -V^N(s,y+\delta    z,r)\nabla u_L(s,y+\delta z)\vert^2d\alpha  dydr    \vert   z\vert^2\vert\nabla_x\rho(z)\vert^2dz\\
&\leq   2\int_{\R^2} \int_{\R^2} \int_{T^2} \int_0^1\vert   \nabla  u_L(y+\alpha \delta    z,s)  V^N\left(s,  y,r\right)  -V^N(s,y,r)\nabla u_L(s,y)\vert^2d\alpha  dydr    \vert   z\vert^2\vert\nabla_x\rho(z)\vert^2dz\\
&+2\int_{\R^2} \int_{\R^2} \int_{T^2} \int_0^1\vert   V^N(s,y,r)\nabla   u_L(s,y)  -V^N(s,y+\delta    z,r)\nabla u_L(s,y+\delta z)\vert^2d\alpha  dydr    \vert   z\vert^2\vert\nabla_x\rho(z)\vert^2dz\\&=I^1_\delta+I^2_\delta.
\end{align*}
Using the continuity of translations in $L^2(\T^2)$ for the function $V^N\nabla u_L$, we get  $\displaystyle\limsup_{\delta\to 0}        I^2_\delta=0$
Concerning  $I^1_\delta$,   note    that    
\begin{align*}
    I^1_\delta\leq  2\int_{\R^2} \int_{\R^2} \int_{T^2} \int_0^1\vert   \nabla u_L(y+\alpha \delta    z,s)    -\nabla    u_L(s,y)\vert^2\vert V^N\left(s,  y,r\right)\vert^2d\alpha  dydr    \vert   z\vert^2\vert\nabla_x\rho(z)\vert^2dz.
\end{align*}
On  the other   hand,   by   mean-value theorem we  get
\begin{align*}
    \vert   \nabla  u_L(y+\alpha \delta    z,s)    -\nabla  u_L(s,y)\vert \leq        \alpha \delta\vert       z\vert \Vert   u_L\Vert_{C^2}
\end{align*}
Thus    $
    \displaystyle   I^1_\delta\leq  
    \delta^2 \Vert   u_L\Vert_{C^2}^2\int_{\R^2}  \int_{T^2} \vert V^N\left(s,  y,r\right)\vert^2  dydr    \int_{\R^2}\vert   z\vert^4\vert\nabla_x\rho(z)\vert^2dz \to 0.$
Finally,    since   $\rho$  is  smooth density of a probability measure,  we  get
  $\int_{\R^2}z_i\partial_j\rho(z)dz=-\delta_{ij}$  and so 
$$\int_{\R^2}\nabla u_L(s)z\cdot\nabla_x\rho(z)dz=-\Div  u_L=0,$$
which   gives   \eqref{lem-17-eqn2-2-*}.    The next    step    is  proving the following
 \begin{align}\label{eqn.delta-1-2-*-hn}
\displaystyle\lim_{\delta}\int_0^t\langle [h_n(s)\cdot \nabla_xV^N(s)]_{\delta}, [V^N(s)]_{\delta}\rangle   ds=0.
    \end{align}
  We    recall    that    $\displaystyle\sum_{k\in K_n}g_k\sigma_k^N=h_n$
  and   $\Div_xh_n=0$,  hence
\begin{align*}
    &\int_0^t\langle [h_n(s)\cdot \nabla_xV^N(s)]_{\delta}, [V^N(s)]_{\delta}\rangle   ds\\&=\int_0^t\langle   \rho_{\delta}*[h_n(s)\cdot \nabla_xV^N(s)]- h_n(s)\cdot \nabla_x\rho_{\delta}*[V^N(s)], \rho_{\delta}*[V^N(s)]\rangle   ds
\end{align*}
Let us  introduce   the  commutator 
\[
r_{\delta}^h(s)= \rho_{\delta}*[h_n(s)\cdot \nabla_xV^N(s)]-u_L(s)\cdot \nabla_x\rho_{\delta}*[V^N(s)].
\]
\eqref{eqn.delta-1-2-*-hn} is  a   consequence  of the following:
   a.e.  $ s\in[0,T]$
\begin{align}
&
\left\Vert r_{\delta}^h(s)\right\Vert _{L^{2}(\T^2\times\R^2)}\leq C\left\Vert g\right\Vert
\left\Vert V^N(s)\right\Vert _{L^{2}(\T^2\times\R^2)},   \label{lem-17-eqn1-2-*-hn}%
\\
&\lim_{\delta}r_{\delta}^h(s)=0\text{ in    } L^{2}(\T^2\times\R^2)  \label{lem-17-eqn2-2-*-hn},%
\end{align}
where   $C>0$   independent of  $\delta$.   Indeed, similarly    to \eqref{lem-17-eqn1-2-*}, we  get
\[
r_{\delta}^h\left(s,  x,r\right)  =-\int_{\R^2}\left(  h_n\left(  x,s\right)  -h_n\left(  y,s\right)
\right)  \cdot\nabla_{x}\rho_{\delta}\left(  x-y\right)  V^N\left(s,  y,r\right)  dy
\]%
thus,   we  obtain  ($\Vert \cdot\Vert_\infty$  denotes the $L^\infty$-norm with    respect to  the $x$-variable)
\begin{align*}
    \Vert   r_{\delta}^h(s)\Vert_{L^{2}(\T^2\times\R^2)}^2
    &\leq  C^2 \Vert    \nabla  h_n(s)\Vert_{\infty}^2\Vert V^N(s)\Vert_{L^{2}(\T^2\times\R^2)}^2,
\end{align*}
since    $\text{supp}[\rho]\subset    B(0,1)$  and    denoted by  $C^2=\int_{\R^2} \vert   z\vert^2  \vert\nabla_{x}\rho\left(  z\right)\vert^2 dz<+\infty.$
On  the other   hand,   note    that    $$\Vert\nabla_xh_n  \Vert_\infty\leq    \displaystyle\sum_{k\in K_n}\vert   g_k\vert    \Vert\nabla\sigma_k^N\Vert_\infty\leq   \displaystyle\sum_{k\in K_n}\dfrac{1}{\vert  k\vert}\vert   g_k\vert    \leq   \displaystyle\sum_{  \vert   k\vert  \leq  n} \vert   g_k\vert :=\Vert   g\Vert \in  L^2(0,T).$$
 The    proof   \eqref{lem-17-eqn2-2-*-hn}   is analogous   to  the proof \eqref{lem-17-eqn2-2-*}   and we omit this    detail. Let us  prove   that       $\displaystyle\lim_{\delta}\int_0^t\langle[\Div_r(\nabla u_L(s)r)V^N(s)]_{\delta}, [V^N(s)]_{\delta}\rangle   ds=0.$
Since   $\Div_r(\nabla u_L(s)r)=0$, we  get 
\begin{align*}
    &\int_0^t\langle[\Div_r(\nabla u_L(s)r)V^N(s)]_{\delta}, [V^N(s)]_{\delta}\rangle   ds\\&=\int_0^t\langle\Div_r\rho_\delta*(\nabla u_L(s)r)V^N(s)-\Div_r(\nabla u_L(s)r)\rho_\delta *   V^N(s), \rho_\delta *   V^N(s)\rangle   ds\\
    &=-\int_0^t\langle\rho_\delta*(\nabla u_L(s)r)V^N(s)-(\nabla u_L(s)r)\rho_\delta *   V^N(s), \rho_\delta *   \nabla_rV^N(s)\rangle   ds.
\end{align*}
On  the other   hand,   note    that
\begin{align*}
    &\big(\rho_\delta*(\nabla u_L(\cdot)r)V^N\big)(s,x,r)-(\nabla u_L(s,x)r)(\rho_\delta *   V^N)(s,x,r)\\
    &=\int_{\R^2}[\nabla    u_L(x-y,s)-\nabla   u_L(x,s)]r\rho_\delta(y)V^N(s,x-y,r)dy.
\end{align*}
By   mean-value theorem we  get
$    \vert   \nabla u_L(x-y,s)    -\nabla    u_L(x,s)\vert \leq        \vert       y\vert \Vert   u_L\Vert_{C^2}$
and 
\begin{align*}
    &\vert  \big(\rho_\delta*(\nabla u_L(\cdot)r)V^N\big)(s,x,r)-(\nabla u_L(s,x)r)(\rho_\delta *   V^N)(s,x,r)\vert    \\
    &\leq  \Vert   u_L\Vert_{C^2} \int_{\R^2}\vert    y\vert  \vert   r\vert  \rho_\delta(y)\vert   V^N(s,x-y,r)\vert       dy\leq  \delta\Vert   u_L\Vert_{C^2} \int_{\R^2}   \rho_\delta(y)\vert   r\vert \vert   V^N(s,x-y,r)\vert   dy,
\end{align*}
since   $\text{supp}[    \rho]\subset B(0,1).$    Therefore,  we  get
\begin{align*}
    &\int_0^t\vert\langle\rho_\delta*(\nabla u_L(s)r)V^N(s)-(\nabla u_L(s)r)\rho_\delta *   V^N(s), \rho_\delta *   \nabla_rV^N(s)\rangle \vert  ds\\
    &\leq    \delta\Vert   u_L\Vert_{C^2}    \int_0^t\Vert \int_{\R^2}   \rho_\delta(y)\vert   r\vert \vert   V^N(s,x-y,r)\vert   dy \Vert \Vert\rho_\delta *   \nabla_rV^N(s)\Vert ds\\
    &\leq    \delta\Vert   u_L\Vert_{C^2}    \int_0^t\Vert \rho_\delta *\vert   r\vert \vert   V^N(s)\vert    \vert    \Vert \Vert\rho_\delta *   \nabla_rV^N(s)\Vert ds\\
    &\leq    \delta\Vert   u_L\Vert_{C^2}    \int_0^t\Vert    V^N(s)    \vert    \Vert_H \Vert   \nabla_rV^N(s)\Vert ds\to    0   \text{  as    } \delta  \to 0.
\end{align*}
Concerning  the term        $\displaystyle\int_0^t\langle[\Div_r(y_nV^N(s))]_{\delta}, [V^N(s)]_{\delta}\rangle   ds.$   We  
recall that $\displaystyle\sum_{k\in K_n}g_k(\nabla \sigma_k^Nr)=y_n.$
Since   $\Div_r(y_n)=0$, we  have
\begin{align*}
    &\int_0^t\langle[\Div_ry_nV^N(s)]_{\delta}, [V^N(s)]_{\delta}\rangle   ds=\int_0^t\langle[\Div_ry_nV^N(s)]_{\delta}-\Div_ry_n[V^N(s)]_{\delta}, [V^N(s)]_{\delta}\rangle   ds\\
    &=\int_0^t\langle\Div_r\rho_\delta*y_nV^N(s)-\Div_ry_n\rho_\delta *   V^N(s), \rho_\delta *   V^N(s)\rangle   ds\\
    &=-\int_0^t\langle\rho_\delta*y_nV^N(s)-y_n\rho_\delta *   V^N(s), \rho_\delta *   \nabla_rV^N(s)\rangle   ds.
\end{align*}
On  the other   hand,   note    that
\begin{align*}
    &\big(\rho_\delta*y_nV^N\big)(s,x,r)-y_n(\rho_\delta *   V^N)(s,x,r)=\int_{\R^2}[y_n(x-y,s)-y_n(x,s)]\rho_\delta(y)V^N(s,x-y,r)dy.
\end{align*}
By   mean-value theorem we  get
\begin{align*}
    \vert   y_n(x-y,s)    -y_n(x,s)\vert &=\vert\displaystyle\sum_{k\in K_n}(g_k(\nabla \sigma_k^Nr)(x-y,s)-g_k(\nabla \sigma_k^Nr)(x,s))\vert\\
     &\leq   \displaystyle\sum_{k\in K_n}\vert   g_k\vert    \Vert   D^2 \sigma_k^N\Vert_\infty\vert  y\vert\vert r\vert\leq   \vert  y\vert\vert r\vert\Vert  g\Vert  \text{  since   }   \Vert   D^2 \sigma_k^N\Vert_\infty\leq  1,
\end{align*}
and 
\begin{align*}
    &\vert  \big(\rho_\delta*(\nabla u_L(\cdot)r)V^N\big)(s,x,r)-(\nabla u_L(s,x)r)(\rho_\delta *   V^N)(s,x,r)\vert    \\
    &\leq  \Vert   g\Vert \int_{\R^2}\vert    y\vert  \vert   r\vert  \rho_\delta(y)\vert   V^N(s,x-y,r)\vert       dy\leq  \delta\Vert   g\Vert \int_{\R^2}   \rho_\delta(y)\vert   r\vert \vert   V^N(s,x-y,r)\vert   dy,
\end{align*}
since   $\text{supp}[    \rho]\subset B(0,1).$    Therefore,  we  get
\begin{align*}
    &\int_0^t\vert\langle\rho_\delta*(\nabla u_L(s)r)V^N(s)-(\nabla u_L(s)r)\rho_\delta *   V^N(s), \rho_\delta *   \nabla_rV^N(s)\rangle \vert  ds\\
     &\leq    \delta\int_0^t\Vert   g\Vert   \Vert \rho_\delta *\vert   r\vert \vert   V^N(s)\vert    \vert    \Vert \Vert\rho_\delta *   \nabla_rV^N(s)\Vert ds\\
    &\leq    \delta\int_0^t\Vert   g\Vert    \Vert    V^N(s)        \Vert_H \Vert   \nabla_rV^N(s)\Vert ds\to    0   \text{  as    } \delta  \to 0.
\end{align*}

Now,    notice    that      $[\Div_rrV^N(s)]_{\delta}=\Div_rr[V^N(s)]_{\delta}$. Hence
\begin{align*}
    \dfrac{1}{\beta}\int_0^t\langle[\Div_rrV^N(s)]_{\delta}, [V^N(s)]_{\delta}\rangle   ds=\dfrac{1}{\beta}\int_0^t\Vert  [V^N(s)]_{\delta}\Vert^2ds\leq   \dfrac{1}{\beta}\int_0^t\Vert  V^N(s)\Vert^2ds.
\end{align*}
Next,   we  have    $\langle \sigma^2[\Delta_rV^N(s)]_{\delta}, [V^N(s)]_{\delta}\rangle   =\langle \sigma^2\Delta_r[V^N(s)]_{\delta}, [V^N(s)]_{\delta}\rangle   =-\sigma^2\Vert \nabla_r[V^N(s)]_{\delta}\Vert^2,   $ thus
\begin{align*}
    \int_0^t\langle \sigma^2[\Delta_rV^N(s)]_{\delta}, [V^N(s)]_{\delta}\rangle   ds=-\sigma^2\int_0^t\Vert [\nabla_rV^N(s)]_{\delta}\Vert^2   ds   \to -\sigma^2\int_0^t\Vert \nabla_rV^N(s)\Vert^2   ds  \text{  as  }   \delta  \to 0,
\end{align*}
since   $\nabla_rV^N\in L^2(0,T;H).$    We  have        $([\Delta_xV^N(s)]_\delta,[V^N(s)]_\delta) =
    (\Delta_x[V^N(s)]_\delta,[V^N(s)]_\delta)$  which   gives
\begin{align*}
    \int_0^t\alpha_N([\Delta_xV^N(s)]_\delta,[V^N(s)]_\delta) ds=-  \int_0^t\alpha_N\Vert\nabla_x[V^N(s)]_\delta\Vert^2 ds\leq 0.
\end{align*}

Concerning  the last    term,   first   we  prove   the following
\begin{align*}
     \int_0^t\langle [\Div_r\sum_{k\in K}\left((\nabla \sigma_k^Nr) \otimes (\nabla \sigma_k^Nr)\right) \nabla_rV^N(s))]_{\delta},[V^N(s)]_{\delta}\rangle    ds+\sum_{k\in K}\int_0^t\Vert[(\nabla \sigma_k^Nr).\nabla_rV^N(s)]_{\delta}\Vert^2ds\underset{\delta\to0}{\to}   0.
\end{align*}
Indeed, we  have  
\begin{align*}
     &\int_0^t\langle [\Div_r\sum_{k\in K}\left((\nabla \sigma_k^Nr) \otimes (\nabla \sigma_k^Nr)\right) \nabla_rV^N(s))]_{\delta},[V^N(s)]_{\delta}\rangle    ds+\sum_{k\in K}\int_0^t\Vert[(\nabla \sigma_k^Nr).\nabla_rV^N(s)]_{\delta}\Vert^2ds\\
     &=-\sum_{k\in K}\int_0^t\langle [(\nabla \sigma_k^Nr)  (\nabla \sigma_k^Nr)\cdot \nabla_rV^N(s)]_{\delta},\nabla_r[V^N(s)]_{\delta}\rangle    ds+\sum_{k\in K}\int_0^t\Vert[(\nabla \sigma_k^Nr).\nabla_rV^N(s)]_{\delta}\Vert^2ds\\
     &=-\sum_{k\in K}\int_0^t\langle [(\nabla \sigma_k^Nr)  (\nabla \sigma_k^Nr)\cdot \nabla_rV^N(s)]_{\delta}-(\nabla \sigma_k^Nr)  [(\nabla \sigma_k^Nr)\cdot \nabla_rV^N(s)]_{\delta},\nabla_r[V^N(s)]_{\delta}\rangle    ds\\
     &\qquad-\sum_{k\in K}\int_0^t\langle (\nabla \sigma_k^Nr)  [(\nabla \sigma_k^Nr)\cdot \nabla_rV^N(s)]_{\delta},\nabla_r[V^N(s)]_{\delta}\rangle    ds\\&\qquad+\sum_{k\in K}\int_0^t([(\nabla \sigma_k^Nr).\nabla_rV^N(s)]_{\delta},[(\nabla \sigma_k^Nr).\nabla_rV^N(s)]_{\delta})ds\\
     &=-\sum_{k\in K}\int_0^t\langle [(\nabla \sigma_k^Nr)  (\nabla \sigma_k^Nr)\cdot \nabla_rV^N(s)]_{\delta}-(\nabla \sigma_k^Nr)  [(\nabla \sigma_k^Nr)\cdot \nabla_rV^N(s)]_{\delta},\nabla_r[V^N(s)]_{\delta}\rangle    ds\\&\qquad-\sum_{k\in K}\int_0^t\langle   [(\nabla \sigma_k^Nr)\cdot \nabla_rV^N(s)]_{\delta},(\nabla \sigma_k^Nr)\cdot\nabla_r[V^N(s)]_{\delta}  -[(\nabla \sigma_k^Nr).\nabla_rV^N(s)]_{\delta})\rangle  ds\\
     &=J^1_\delta+J^2_\delta.\end{align*}
     Let    us  prove   that    $\displaystyle\limsup_{\delta\to  0}  \vert   J^1_\delta\vert=0.$   Recall  that    $(\nabla \sigma_k^Nr)$  has the form    $\theta_{\vert  k\vert}^N\vert  k\vert\dfrac{k}{\vert  k\vert  }\cdot  r    \dfrac{k^\perp}{\vert  k\vert  }   h(k\cdot x)$\footnote{$  h(\cdot
     )=\cos(\cdot)$ or  $    h(\cdot
     )=-\sin(\cdot)$,   see  \eqref{nabla-sigma}.}   for any $k\in       \mathbb{Z}^2_0$.  Thus,  $\vert   \nabla \sigma_k^Nr\vert\leq \theta_{\vert  k\vert}^N\vert  k\vert\vert  r\vert$. On the other   hand,   we  have 
     \begin{align*}
        \vert   J^1_\delta\vert \leq     \sum_{k\in K}\int_0^t\vert\langle [(\nabla \sigma_k^Nr)  (\nabla \sigma_k^Nr)\cdot \nabla_rV^N(s)]_{\delta}-(\nabla \sigma_k^Nr)  [(\nabla \sigma_k^Nr)\cdot \nabla_rV^N(s)]_{\delta},\nabla_r[V^N(s)]_{\delta}\rangle \vert    ds
     \end{align*}
     and   
     \begin{align*}   &\big([(\nabla \sigma_k^Nr)  (\nabla \sigma_k^Nr)\cdot \nabla_rf^N(s)]_{\delta}-(\nabla \sigma_k^Nr)  [(\nabla \sigma_k^Nr)\cdot \nabla_rV^N(s)]_{\delta}\big)(x,r)\\
     &=-\int_{\R^2}[(\nabla \sigma_k^N(x)r)-(\nabla \sigma_k^N(x-y)r) ]  (\nabla \sigma_k^N(x-y)r)\cdot \nabla_rV^N(s,x-y,r)\rho_\delta(y)dy.
     \end{align*}
  By mean-value theorem,     we  get 
   \begin{align*}
       \vert(\nabla \sigma_k^N(x)r)-(\nabla \sigma_k^N(x-y)r) \vert\leq  \theta_{\vert  k\vert}^N\vert  k\vert^2\cdot  \vert   r\vert    \vert y\vert\leq  2 N\vert k\vert\theta_{\vert  k\vert}^N\cdot  \vert   r\vert    \vert y\vert.
          \end{align*}
          Therefore
          \begin{align*}
               &\vert\big([(\nabla \sigma_k^Nr)  (\nabla \sigma_k^Nr)\cdot \nabla_rV^N(s)]_{\delta}-(\nabla \sigma_k^Nr)  [(\nabla \sigma_k^Nr)\cdot \nabla_rV^N(s)]_{\delta}\big)(x,r)\vert\\
     &\leq  \int_{\R^2}  2N\vert    k\vert\theta_{\vert  k\vert}^N\cdot  \vert   r\vert    \vert y\vert  \vert    (\nabla \sigma_k^N(x-y)r)\cdot \nabla_rV^N(s,x-y,r)\vert\rho_\delta(y)dy\\
      &\leq 2\delta  N(\theta_{\vert  k\vert}^N)^2\vert  k\vert^2\cdot  \vert   r\vert \int_{\R^2}       \vert    \vert r\vert  \vert\nabla_rV^N(s,x-y,r)\vert\rho_\delta(y)dy.
          \end{align*}
          Thus, we  get
          \begin{align*}&\vert\langle [(\nabla \sigma_k^Nr)  (\nabla \sigma_k^Nr)\cdot \nabla_rV^N(s)]_{\delta}-(\nabla \sigma_k^Nr)  [(\nabla \sigma_k^Nr)\cdot \nabla_rV^N(s)]_{\delta},\nabla_r[V^N(s)]_{\delta}\rangle\vert \\
          &=\vert\int_{\R^2}\int_{\T^2}[(\nabla \sigma_k^Nr)  (\nabla \sigma_k^Nr)\cdot \nabla_rV^N(s)]_{\delta}-(\nabla \sigma_k^Nr)  [(\nabla \sigma_k^Nr)\cdot \nabla_rV^N(s)]_{\delta}\nabla_r[V^N(s)]_{\delta}dxdr\vert\\
           &\leq\int_{\R^2}\int_{\T^2}\vert(\nabla \sigma_k^Nr)  (\nabla \sigma_k^Nr)\cdot \nabla_rV^N(s)]_{\delta}-(\nabla \sigma_k^Nr)  [(\nabla \sigma_k^Nr)\cdot \nabla_rV^N(s)]_{\delta}\vert\vert\nabla_r[V^N(s)]_{\delta}\vert   dxdr\\
           &\leq2\delta  N(\theta_{\vert  k\vert}^N)^2\vert  k\vert^2\int_{\R^2}\int_{\T^2}  \vert   r\vert \int_{\R^2}       \vert    \vert r\vert  \vert\nabla_rV^N(s,x-y,r)\vert\rho_\delta(y)dy\vert\nabla_r[V^N(s)]_{\delta}\vert   dxdr\vert\\
           &\leq2\delta  N(\theta_{\vert  k\vert}^N)^2\vert  k\vert^2\Vert  \rho_\delta*\vert   r\vert  \vert   \nabla_r    V^N(s)\vert\Vert_{L^2(\T^2\times\R^2)}^2\\&\leq    2\delta  N(\theta_{\vert  k\vert}^N)^2\vert  k\vert^2\Vert  \vert   r\vert  \vert   \nabla_r    V^N(s)\vert\Vert_{L^2(\T^2\times\R^2)}^2\leq    2\delta  N(\theta_{\vert  k\vert}^N)^2\vert  k\vert^2\Vert     \nabla_r    V^N(s)\Vert_{H}^2.
                         \end{align*}
                         Hence, we  deduce
                          $
       \displaystyle \vert   J^1_\delta\vert \leq    2\delta  N \sum_{k\in K}(\theta_{\vert  k\vert}^N)^2\vert  k\vert^2\int_0^t\Vert     \nabla_r    V^N(s)\Vert_{H}^2  ds    \to 0   \text{  as  }   \delta  \to 0.
     $
     Concerning $J^2_\delta$,   we  recall  that    \begin{align*}
         \vert  J^2_\delta\vert \leq    \sum_{k\in K}\int_0^t\vert    \langle   [(\nabla \sigma_k^Nr)\cdot \nabla_rV^N(s)]_{\delta},(\nabla \sigma_k^Nr)\cdot\nabla_r[V^N(s)]_{\delta}    -[(\nabla \sigma_k^Nr).\nabla_rV^N(s)]_{\delta})\rangle\vert    ds.
     \end{align*}
     On the other   hand,   we  have  \begin{align*}
         &\vert(\nabla \sigma_k^Nr)\cdot\nabla_r[V^N(s)]_{\delta}    -[(\nabla \sigma_k^Nr).\nabla_rV^N(s)]_{\delta}\vert\\&=\vert\int_{\R^2}[(\nabla \sigma_k^N(x)r)-(\nabla \sigma_k^N(x-y)r) ]  \cdot \nabla_rV^N(s,x-y,r)\rho_\delta(y)dy\vert\\
         &\leq   \delta \vert  k\vert^2\theta_{\vert  k\vert}^N \int_{\R^2}\vert   r\vert  \vert\nabla_rV^N(s,x-y,r) \vert  \rho_\delta(y)dy,     \end{align*}
         consequently,  we  get 
         \begin{align*}
             &\vert    \langle   [(\nabla \sigma_k^Nr)\cdot \nabla_rf^N(s)]_{\delta},(\nabla \sigma_k^Nr)\cdot\nabla_r[V^N(s)]_{\delta}    -[(\nabla \sigma_k^Nr).\nabla_rV^N(s)]_{\delta})\rangle\vert\\
             &\leq  \int_{\R^2}\int_{\T^2}\vert      [(\nabla \sigma_k^Nr)\cdot \nabla_rV^N(s)]_{\delta}\vert   \vert   (\nabla \sigma_k^Nr)\cdot\nabla_r[V^N(s)]_{\delta}    -[(\nabla \sigma_k^Nr).\nabla_rV^N(s)]_{\delta})\vert   dxdr\\
             &\leq  \delta \vert  k\vert^2\theta_{\vert  k\vert}^N\int_{\R^2}\int_{\T^2}\vert    [(\nabla \sigma_k^Nr)\cdot \nabla_rV^N(s)]_{\delta}\vert  \int_{\R^2}\vert   r\vert  \vert\nabla_rV^N(s,x-y,r) \vert  \rho_\delta(y)dy\vert   dxdr\\
             &\leq  \delta \vert  k\vert^3(\theta_{\vert  k\vert}^N)^2\int_{\R^2}\int_{\T^2}\vert  \int_{\R^2}\vert   r\vert  \vert\nabla_rV^N(s,x-y,r) \vert  \rho_\delta(y)dy\vert^2   dxdr\\
             &\leq  \delta \vert  k\vert^3(\theta_{\vert  k\vert}^N)^2\Vert\vert  \int_{\R^2}\vert   r\vert  \vert\nabla_rV^N(s,x-y,r) \vert  \rho_\delta(y)dy\vert\Vert^2_{L^2(\T^2\times\R^2)}\\&\leq   \delta \vert  k\vert^3(\theta_{\vert  k\vert}^N)^2\Vert\nabla_rV^N(s) \vert  \Vert^2_{H}\leq   2\delta N\vert  k\vert^2(\theta_{\vert  k\vert}^N)^2\Vert\nabla_rV^N(s) \vert  \Vert^2_{H}
         \end{align*}
         and    
               \begin{align*}
        \vert   J^2_\delta\vert \leq    2\delta  N \sum_{k\in K}(\theta_{\vert  k\vert}^N)^2\vert  k\vert^2\E\int_0^t\Vert     \nabla_r    V^N(s)\Vert_{H}^2  ds    \to 0   \text{  as  }   \delta  \to 0.
     \end{align*}
     Finally,   we  get
     \begin{align*}
   \lim_{\delta\to0}  &\int_0^t\langle [\Div_r\sum_{k\in K}\left((\nabla \sigma_k^Nr) \otimes (\nabla \sigma_k^Nr)\right) \nabla_rV^N(s))]_{\delta},[V^N(s)]_{\delta}\rangle    ds\\&=-\sum_{k\in K}\int_0^t\Vert(\nabla \sigma_k^Nr).\nabla_rV^N(s)\Vert^2ds \leq  0,
\end{align*}
where   we  used $\nabla_rV^N\in     L^2(0,T;H))$
and \begin{align*}\sum_{k\in K}\int_0^t\Vert[(\nabla \sigma_k^Nr).\nabla_rV^N(s)]_{\delta}\Vert^2ds\to_{\delta\to0}   \sum_{k\in K}\int_0^t\Vert(\nabla \sigma_k^Nr).\nabla_rV^N(s)\Vert^2ds.
    \end{align*} 
In conclusion,  by  passing to  the limit   as  $\delta\to0$    in  \eqref{Vn-regularized-uniq}  and using   the above   estimates,   we  get
 \begin{align}\label{limit-delta-inequality}&\dfrac{1}{2} \Vert V^N(t)\Vert^2+\sigma^2\int_0^t\Vert \nabla_rV^N(s)\Vert^2   ds	\leq  \dfrac{1}{\beta}\int_0^t\Vert  V^N(s)\Vert^2ds.    \end{align}
 The    last    inequality  \eqref{limit-delta-inequality}  and Grönwall    lemma   ensure  that    $V^N\equiv  0$  in  $L^\infty(0,T;H)$-sense   and $\nabla V^N\equiv  0$  in  $L^2(0,T;H)-$sense as  well,    which   ends    the proof   of  uniqueness.

   \subsubsection{Uniqueness  of    quasi-regular
weak solutions  of  \eqref{Ito-FP}}\label{Subsection-uniq-quasi-weak}
We  will    use \autoref{lem-uniq-Vn}   to  prove   that    the solution    to  \eqref{Ito-FP}  in  the sense   of  \autoref{Def-sol}   is  unique  (in particular  we  use the point   (4)). First,  
note that the set of \textit{quasi-regular weak} solutions forms a
linear subspace of $L^2 (0, T ;H)$, since \eqref{Ito-FP}
is a    linear  equation, and the regularity conditions is a linear constraint. Therefore, it is enough
to show that a quasi-regular weak solution $f^N\equiv    0$ if the   initial data $f_0\equiv 0.$
\\

Let $g\in   G_n$,  by   using   \autoref{lem-uniq-Vn} we  proved  that    $\E[f^N(t)e_g(t)]=0$    in  $L^\infty(0,T;H)$-sense.  Our aim is  to  prove   that    $f^N\equiv       0$.    We  recall    that  from  \autoref{lem-uniq-Vn}:   for any $t\in[0,T],$    we  have
\begin{align*}
    (\E[f^N(t)e_g(t)],\varphi)=0,    \forall \varphi\in  V  \text{  and for any }   g\in    \mathcal{D}.
\end{align*}
Now, let  $G$   be  a     random  variable,        which   can be  written as  a   linear  combination of  finite  number  of  $e_g(t)$, it follows (by linearity)
\begin{align*}
    (\E[f^N(t)G],\varphi)=0,    \forall \varphi\in  V. 
\end{align*}  Next,  by  density of  $\mathcal{D}$   in  $L^2(\Omega,\overline{\mathcal{G}_t}),$ the last    equality    holds   for any $G\in   L^2(\Omega,\overline{\mathcal{G}_t}),$  namely
\begin{align*}
    \E[(f^N(t),\varphi)G]=0,    \forall \varphi\in  V, \quad \forall G\in   L^2(\Omega,\overline{\mathcal{G}_t}). 
\end{align*}
Since   $(f^N(t),\varphi)$  is  $\overline{\mathcal{G}_t}$-adapted, we  get
$    (f^N(t),\varphi)=0$, for any $\varphi\in V$.  Recall   that   $V$ is  dense  subspace   in  $H$,  thus,  we  deduce    that  $f^N\equiv  0$    and the uniqueness  holds.

    \section{Diffusion  scaling limit    as  $N\to   +\infty$}\label{Section-diffusion}
      Our aim in  this    section is  to  show    that    the unique  solution    of   stochastic  FP  equation    \eqref{Ito-FP},    in  the sense   of  \autoref{Def-sol},  converges   weakly  to  the unique   solution    of \eqref{equ-final},    under   the following   scaling    of  the noise coefficients:
   $\theta^N_{\left\vert k\right\vert}=\frac{a}{\left\vert k\right\vert^2 }$ if $ N \leq \left\vert k\right\vert \leq  2N$ and $\theta^N_{\left\vert k\right\vert}=0$ else. First,  note    that   
  \begin{align}\label{coefficient}    \lim_{N\to +\infty}\sum_{k\in K_{++}}\left(  \theta_{\left\vert k\right\vert }^{N}\right)^{2}=0;\quad 
 \lim_{N\to +\infty}\sup_{k\in K}\left(  \theta_{\left\vert k\right\vert }^{N}\right)  ^{2}= 0 \text{ and }
 \lim_{N\to +\infty}\sup_{k\in K}\left\vert k\right\vert
 ^{2}\left(  \theta_{\left\vert k\right\vert }^{N}\right)  ^{2}= 0.
 \end{align}
 
\subsection{Time regularity and tightness of laws}
 We begin  by showing the following result about regularity in time  of  $(f^N)_N$.
\begin{lemma}\label{Lemma-tight1}
        Let $2<p<+\infty$,  there   exists    $C>0$,   independent of  $N$  such    that
   \begin{align*}
       \E \Vert f^N\Vert_{\mathcal{C}^{\eta}([0,T],U^\prime)}^p  \leq    C  \text{  for   any   }  0<\eta<\min(\dfrac{p-2}{p},\dfrac{1}{2}).
   \end{align*}
\end{lemma}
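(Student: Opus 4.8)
The plan is to read off from the weak formulation of \eqref{Ito-FP} established in \autoref{Subsection-exist-proof} that, for $0\le s\le t\le T$ and every test function $\phi\in U$, the increment $\langle f^N(t)-f^N(s),\phi\rangle$ splits into a time integral (the drift) plus a finite sum of It\^o integrals (the martingale part). I would estimate these two $U^\prime$-valued contributions separately, establishing
\[
\E\Vert f^N(t)-f^N(s)\Vert_{U^\prime}^p\le C\,|t-s|^{p/2},
\]
with $C$ \emph{independent of $N$}, and then invoke the Kolmogorov--Chentsov continuity theorem in the separable Hilbert space $U^\prime$ to produce a H\"older-continuous modification together with the asserted moment bound, for $\eta$ in the stated range. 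The inputs that keep the right-hand side finite and uniform in $N$ are the energy estimate of \autoref{Lemma9} and the higher moments of \autoref{Lemma10}.

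For the drift I would test against $\phi\in U$ and bound each term by $C\big(\Vert f^N\Vert_H+\Vert\nabla_rf^N\Vert_H\big)\Vert\phi\Vert_U$. Here the weighted structure of $V=H^2_{r,4}$ and $H=L^2_{r,2}$ is essential: the $r$-growth of the coefficients $(\nabla u_L\,r-\tfrac1\beta r)$ and of $A(r)+O(\tfrac1N)P(r)$ (which is $\le C(1+\vert r\vert^2)$ uniformly in $N$ by \autoref{lem-matrix-r}) is absorbed against $\nabla_r\phi$ by exploiting that $\phi\in U\hookrightarrow V$ carries the weight $(1+\vert r\vert^2)^2$, while $\alpha_N\le a^2$ renders the $\alpha_N\Delta_x$ term harmless. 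Taking the $U^\prime$-norm of the time integral, the transport and $\Delta_x$ pieces are Lipschitz in time (their integrands are bounded in $H$, with finite $p$-th moment by \autoref{Lemma10}), whereas the pieces containing $\nabla_rf^N$ are only $L^2$ in time, so H\"older's inequality contributes a factor $|t-s|^{1/2}\big(\int_s^t\Vert\nabla_rf^N\Vert_H^2\,d\tau\big)^{1/2}$; raising to the $p$-th power and using \autoref{Lemma10} delivers the $|t-s|^{p/2}$ bound for the drift.

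For the martingale part I would regard $\sum_{k}\int_s^t G_k^N\,dW^k$ as a $U^\prime$-valued stochastic integral, with $\langle G_k^N,\phi\rangle=(f^N,\sigma_k^N\!\cdot\!\nabla_x\phi+(\nabla\sigma_k^Nr)\!\cdot\!\nabla_r\phi)$, and apply the Burkholder--Davis--Gundy inequality in $U^\prime$. The pointwise bounds $\vert\sigma_k^N\vert\le\theta_k^N$ and $\vert\nabla\sigma_k^Nr\vert\le\theta_k^N\vert k\vert\,\vert r\vert$ give $\Vert G_k^N\Vert_{U^\prime}\le C\,\theta_k^N\vert k\vert\,\Vert f^N\Vert_H$, so that $\sum_{k\in K}\Vert G_k^N\Vert_{U^\prime}^2\le C\Vert f^N\Vert_H^2\sum_{N\le\vert k\vert\le2N}\vert k\vert^{-2}a^2\le C\Vert f^N\Vert_H^2$, the last sum being bounded \emph{uniformly in $N$}; this is the same shell-scaling cancellation recorded in \eqref{coefficient}. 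Then BDG together with \autoref{Lemma10} yields $\E\Vert\sum_k\int_s^tG_k^N\,dW^k\Vert_{U^\prime}^p\le C\,\E\big(\int_s^t\Vert f^N\Vert_H^2\,d\tau\big)^{p/2}\le C\,|t-s|^{p/2}$.

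The main obstacle is keeping every constant uniform in $N$ while the individual coefficients degenerate: the stretching terms $\nabla\sigma_k^Nr$ and the limit matrix $A(r)$ grow in $r$, and the estimates only close because the weighted chain $U\hookrightarrow V\hookrightarrow H$ is tailored to absorb this growth, and because the shell sums $\sum_{N\le\vert k\vert\le2N}\vert k\vert^{-2}$ and $\alpha_N$ remain bounded as $N\to\infty$. A second, milder, point is that $\nabla_rf^N$ is controlled only in $L^2$ in time, which limits the deterministic time-regularity of the drift to the same $\tfrac12$-order exponent produced by the martingale part; feeding the resulting increment bounds into the Kolmogorov criterion then produces the two competing exponents whose minimum appears in the statement.
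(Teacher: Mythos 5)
Your proposal is correct and rests on the same core estimates as the paper's proof: the same drift/martingale decomposition of the increment $f^N(t)-f^N(s)$ in $U^\prime$, the same uniform-in-$N$ inputs (the bound $\alpha_N\le a^2$, the shell sums $\sum_{N\le\vert k\vert\le 2N}(\vert k\vert^2+1)(\theta^N_{\vert k\vert})^2\le C$, and \autoref{Lemma9}--\autoref{Lemma10}), and Burkholder--Davis--Gundy for the stochastic integral. The two routes differ only in one detail of the drift bound and in the final step. For the drift, the paper dualizes twice in $r$, pairing $f^N$ itself against $\Delta_r\phi$ and $\Div_r(A_k^N\nabla_r\phi)$ for $\phi\in U\hookrightarrow H^2_{r,4}$, so every deterministic term is Lipschitz in time with constant $\sup_t\Vert f^N(t)\Vert_H$; you integrate by parts only once and accept the weaker $\vert t-s\vert^{1/2}$ factor coming from the $L^2$-in-time control of $\nabla_rf^N$, which is harmless since the martingale part caps the exponent at $1/2$ anyway. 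For the conclusion, the paper estimates the $W^{s,p}(0,T;U^\prime)$ norm of the process and invokes the fractional Sobolev embedding into $\mathcal{C}^\eta([0,T];U^\prime)$, whereas you feed the increment bound $\E\Vert f^N(t)-f^N(s)\Vert_{U^\prime}^p\le C\vert t-s\vert^{p/2}$ into Kolmogorov--Chentsov; the latter produces a H\"older-continuous modification, but since $f^N\in C_w([0,T];H)$ a.s.\ and $H$ embeds compactly into $U^\prime$, the paths of $f^N$ are already continuous in $U^\prime$ and coincide with that modification, so nothing is lost. One shared caveat you should make explicit: run at a fixed $p$, both arguments naturally yield only the range $\eta<\tfrac12-\tfrac1p=\tfrac{p-2}{2p}$ (Kolmogorov gives $\eta<(p/2-1)/p$, and the correct form of the embedding is $\eta<s-\tfrac1p$ with $s<\tfrac12$), which is strictly smaller than the stated $\min(\tfrac{p-2}{p},\tfrac12)$; the stated range is recovered by running the argument with a larger exponent $p^\prime$ and then lowering the moment from $p^\prime$ to $p$ by H\"older's inequality.
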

\begin{proof}
Let $0<h<1$ and $t\in[0,T-h]$.    From	\autoref{Def-sol},  point   (3),    the following   equality    holds   in  $U^\prime$-sense:		
 \begin{align*}&f^N(t+h)-f^N(t)=-\int_t^{t+h}((u_L\cdot \nabla_xf^N(s))-\Div_r[(\nabla_x u_Lr-\dfrac{1}{\beta}r)f^N(s)])ds\\	&+	\int_t^{t+h}[\sigma^2 \Delta_rf^N(s)+\alpha_N(\Delta_xf^N(s)]ds+ \dfrac{1}{2}\int_t^{t+h} \Div_r(A_k^N\nabla_rf^N(s)) ds\\&-\sum_{k\in K}\int_t^{t+h}(\sigma_k^N.\nabla_xf^N(s)+(\nabla \sigma_k^Nr).\nabla_rf^N(s))dW^k(s):=I_1+I_2+I_3+I_4.\end{align*}
On the one hand, we have	P-a.s.
\begin{align*}
\Vert   I_1\Vert_{U^\prime}&\leq \int_t^{t+h}\Vert	u_L\cdot \nabla_xf^N(s)+\Div_r[(\nabla_x u_Lr-\dfrac{1}{\beta}r)f^N(s)]\Vert_{U^\prime}ds\\
&\leq	[\Vert    u_L\Vert_{\infty}+\Vert    \nabla_x u_L\Vert_{\infty}+\dfrac{1}{\beta}]\int_t^{t+h}\Vert	f^N(s)\Vert_{H}ds\\
&\leq   h[\Vert    u_L\Vert_{\infty}+\Vert    \nabla_x u_L\Vert_{\infty}+\dfrac{1}{\beta}]\sup_{t \in [0,T]}\Vert	f^N(t)\Vert_{H},
\end{align*}
since	$u_L$	is	a	smooth	function.     Concerning    $I_2$,	note	that
	\begin{align*}
\Vert   I_2\Vert_{U^\prime} &\leq	
\int_t^{t+h}\Vert	\sigma^2 \Delta_rf^N(s)+\alpha_N\Delta_xf^N(s)\Vert_{U^\prime}ds\leq(\sigma^2+\alpha_N)	\int_t^{t+h}\Vert	f^N(s)\Vert_{H}ds\\&\leq(\sigma^2+1)	\int_t^{t+h}\Vert	f^N(s)\Vert_{H}ds\leq    h(\sigma^2+1)\sup_{t \in [0,T]}\Vert	f^N(t)\Vert_{H}.
\end{align*}
Moreover    $
	\displaystyle\Vert   I_3\Vert_{U^\prime}\leq	
	\int_t^{t+h}\Vert	\Div_r(A_k^N\nabla_rf^N(s)\Vert_{U^\prime}ds,
 $  we    recall  that 
 \begin{align*}
     \Vert	\Div_r(A_k^N\nabla_rf^N(s)\Vert_{U^\prime}=\sup_{\Vert	\phi\Vert_U	\leq	1}\vert \langle \Div_r(A_k^N\nabla_rf^N(s),\phi\rangle\vert
 \end{align*}
 but    $
     \langle \Div_r(A_k^N\nabla_rf^N),\phi\rangle=( \Div_r(A_k^N\nabla_rf^N),\phi)=( f^N,\Div_r(A_k^N\nabla_r\phi)),
 $
 therefore
 \begin{align*}
 \int_t^{t+h}\Vert	\Div_r(A_k^N\nabla_rf^N(s)\Vert_{U^\prime}ds &\leq	\sum_{k\in K}\left\vert k\right\vert
	^{2}\left(  \theta_{\left\vert k\right\vert }^{N}\right)  ^{2}\int_t^{t+h}\Vert	f^N(s)\Vert_{H}ds\\
 &\leq   h\sup_{t \in [0,T]}\Vert	f^N(t)\Vert_{H}\sum_{k\in K}\left\vert k\right\vert
	^{2}\left(  \theta_{\left\vert k\right\vert }^{N}\right)  ^{2}.
		\end{align*}

Concerning	the	stochastic	integral  $I_4$,	let	$\phi\in	U	$	and	note	that		
	\begin{align*}
\sum_{k\in K}&\int_t^{t+h}\langle(\sigma_k^N.\nabla_xf^N(s)+(\nabla \sigma_k^Nr).\nabla_rf^N(s),\phi\rangle   dW^k(s)\\&=
\sum_{k\in K}\int_t^{t+h}\langle\sigma_k^N.\nabla_xf^N(s)+(\nabla \sigma_k^Nr).\nabla_rf^N(s),\phi\rangle dW^k(s)\\&=-
\sum_{k\in K}\int_t^{t+h}(f^N(s),\sigma_k^N.\nabla_x\phi+(\nabla \sigma_k^Nr).\nabla_r\phi)dW^k(s).\end{align*}
We  recall
	\begin{align*}
\Vert\sum_{k\in K}&\int_t^{t+h}(\sigma_k^N.\nabla_xf^N(s)+(\nabla \sigma_k^Nr).\nabla_rf^N(s))dW^k(s)\Vert_{U^\prime}\\&=\sup_{\Vert	\phi\Vert_U	\leq	1}\vert\sum_{k\in K}\int_t^{t+h}(f^N(s),\sigma_k^N.\nabla_x\phi+(\nabla \sigma_k^Nr).\nabla_r\phi)dW^k(s)\vert.
	\end{align*}
	Let    $\phi   \in U$  such    that
 	$\Vert	\phi\Vert_U	\leq	1$    and $1<p<+\infty$.
  Burkholder-Davis-Gundy inequality  	ensures
	\begin{align*}
&
\E\vert\sum_{k\in K}\int_t^{t+h}(f^N(s),\sigma_k^N.\nabla_x\phi+(\nabla \sigma_k^Nr).\nabla_r\phi)dW^k(s)\vert^p\\&\leq	
\E[\int_t^{t+h}\sum_{k\in K}(f^N(s),\sigma_k^N.\nabla_x\phi+(\nabla \sigma_k^Nr).\nabla_r\phi)^2ds]^{p/2}\\
&\leq	[\sum_{k\in K}\left(\vert k\right\vert
^{2}+1)\left(  \theta_{\left\vert k\right\vert }^{N}\right)^{2}]^{p/2}\E[\int_t^{t+h}(f^N(s),\nabla_x\phi+\vert	r\vert\vert\nabla_r\phi\vert)^2ds]^{p/2}
\\&\leq	C_1\E[\int_t^{t+h}\Vert	f^N(s)\Vert^2\Vert	\phi\Vert_U^2ds]^{p/2}\leq	C_1\E[\int_t^{t+h}\Vert	f^N(s)\Vert^2\Vert	\phi\Vert_U^2ds]^{p/2}\leq	C_1\E[\int_t^{t+h}\Vert	f^N(s)\Vert^2ds]^{p/2},\end{align*}
where   $C_1:=[\sum_{k\in K}\left(\vert k\right\vert
^{2}+1)\left(  \theta_{\left\vert k\right\vert }^{N}\right)^{2}]^{p/2}.$
Thus,	we	obtain
\begin{align}\label{stoch-BDG}
   \E \Vert\sum_{k\in K}\int_t^{t+h}(\sigma_k^N.\nabla_xf^N(s)+(\nabla \sigma_k^Nr).\nabla_rf^N(s))dW^k(s)\Vert_{U^\prime}^p &\leq    C_1\E[\int_t^{t+h}\Vert	f^N(s)\Vert^2ds]^{p/2}\notag\\
   &\leq    C_1h^{p/2}\E\sup_{q\in[0,T]}\Vert	f^N(q)\Vert^p.\end{align}
   Recall   the definition  of      $W^{s,p}(0,T; U^\prime),$       the   Sobolev space of all   $u\in  L^p(0,T; U^\prime) $    such    that
   $$   \int_0^T\int_0^T\dfrac{\Vert    u(t)-u(r)\Vert_{U^\prime}^p}{\vert t-r\vert^{1+sp}}dtdr<+\infty,$$
   endowed  with    the norm
   $$   \Vert   u\Vert_{W^{s,p}(0,T; U^\prime)}^p=\Vert   u\Vert_{L^{p}(0,T; U^\prime)}^p+\int_0^T\int_0^T\dfrac{\Vert    u(t)-u(r)\Vert_{U^\prime}^p}{\vert t-r\vert^{1+sp}}dtdr.$$
   Now,     denote  by  $\displaystyle  I(f^N)(\cdot)=\sum_{k\in K}\int_0^{\cdot}(\sigma_k^N.\nabla_xf^N(s)+(\nabla \sigma_k^Nr).\nabla_rf^N(s))dW^k(s).$\\

   Thanks   to  \eqref{borne-p-limit}   and Burkholder-Davis-Gundy inequality,  one has 
   \begin{align*}
       \E\Vert   I(f^N)\Vert_{L^{p}(0,T; U^\prime)}^p\leq \mathbf{M},\quad \mathbf{M}>0    \text{  independent   of  }   m    \text{   and    } N .
   \end{align*}
   Concerning   the second  part,   note    that
\begin{align*}
   \E \int_0^T\int_0^T\dfrac{\Vert    I(f^N)(t)-I(f^N)(r)\Vert_{U^\prime}^p}{\vert t-r\vert^{1+sp}}dtdr&= \int_0^T\int_0^T\dfrac{\E\Vert    I(f^N)(t)-I(f^N)(r)\Vert_{U^\prime}^p}{\vert t-r\vert^{1+sp}}dtdr\\
   &\leq     C_1\E\sup_{q\in[0,T]}\Vert	f^N(q)\Vert^p\int_0^T\int_0^T\dfrac{\vert t-r\vert^{p/2}}{\vert t-r\vert^{1+sp}}dtdr\\ &\leq     C_1\E\sup_{q\in[0,T]}\Vert	f^N(q)\Vert^p\int_0^T\int_0^T\vert t-r\vert^{p(\frac{1}{2}-s)-1}dtdr\leq  C,
\end{align*}
if   $p(\frac{1}{2}-s)>0$,  which   holds for  any $s\in]0,\frac{1}{2}[.$\\

Let $p>2$,  then    $   0<sp-1<\dfrac{p-2}{2}$. Denote  by  $\mathcal{C}^{\eta}([0,T],U^\prime)$    the space   of  $\eta$-Hölder continuous   functions    with    values  in  $U^\prime.$
and we recall that  (see e.g. \cite{Flan-Gater}) $$ W^{s,p}(0,T; U^\prime) \hookrightarrow \mathcal{C}^{\eta}([0,T],U^\prime)\quad  \text{if} \quad0<\eta < sp-1.$$
Let us take $s \in  \big[0,\dfrac{1}{2}\big[$ such  that     $sp>1$. For $\eta \in \big]0,  sp-1\big[$, it follows from    the previous   estimates
      \begin{align*}
       &   \E \Vert\sum_{k\in K}\int_0^{\cdot}(\sigma_k^N.\nabla_xf^N(s)+(\nabla \sigma_k^Nr).\nabla_rf^N(s))dW^k(s)\Vert_{W^{s,p}(0,T;U^\prime)}^p \leq C+\mathbf{M}.
      \end{align*}
   Thus $
       \E \Vert I(f^N)\Vert_{\mathcal{C}^{\eta}([0,T],U^\prime)}^p  \leq    C+\mathbf{M}.
  $
Consequently,   we  obtain  
   \begin{align*}
       \E \Vert f^N\Vert_{\mathcal{C}^{\eta}([0,T],U^\prime)}^p  \leq    C+\mathbf{M}   \text{  for   any   }  0<\eta<\min(\dfrac{p-2}{p},\dfrac{1}{2}),
   \end{align*}
which	gives	that	$(f^N)_N$	is	bounded	in	$L^p(\Omega,C^\eta([0,T],U^\prime)),	\eta\in]0,\min(\frac{p-2}{p},\frac{1}{2})[$   with    $2<p<+\infty.$

\end{proof}
As    a   consequence of   \autoref{Lemma9},  \autoref{Lemma10},    \autoref{Lemma-tight1}   and the lower   semi-continuity  of  the weak    convergence,    we  obtain      \begin{proposition}\label{Prop-boun-} 	
 	Let   $p\geq  2$, there exists $\mathbf{K}>0$ such that the unique solution $(f^N)_N$ to  \eqref{Ito-FP} is bounded    by $\mathbf{K} $ in  
    $$L^p(\Omega,L^\infty(0,T;H))\cap L^p(\Omega,C^\eta([0,T],U^\prime)),	\quad 0<\eta<\min(\dfrac{p-2}{p},\dfrac{1}{2}).$$
    Moreover,   $(\nabla_rf^N)_N $  is  bounded in  $L^2(\Omega,L^2(0,T;H)).$
   \end{proposition}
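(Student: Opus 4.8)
The plan is to package the uniform-in-$N$ a priori estimates already obtained at the Galerkin level and transfer them to the limit solution $f^N$ by weak lower semicontinuity, the only real work being to track that no constant degenerates as $N\to+\infty$. Concretely, \autoref{Lemma9} gives $\E\sup_{q\in[0,t]}\Vert f_m(q)\Vert_H^2+4\sigma^2\E\int_0^t\Vert\nabla_r f_m\Vert_H^2\,ds\le 2\Vert f_0\Vert_H^2 e^{\lambda t}$ and \autoref{Lemma10} upgrades this to every even moment with a constant $\mathbf C$ independent of $m$ and $N$. The crucial point is the $N$-independence of the Grönwall constants: inspecting \eqref{const-Gronwall} and the proof of \autoref{Lemma10}, the only $N$-dependent ingredients are the lattice sums $\sum_{k\in K}(\theta^N_{\vert k\vert})^2\vert k\vert^2$, $\sum_{k}\vert k\vert^{-2}$ and $\alpha_N$, each bounded uniformly in $N$ by \eqref{alpha-N} and the estimates of \autoref{Subsection-estimate-fm}; hence $\lambda$ and $\mathbf C$ stay finite as $N\to+\infty$.

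Next I would pass to the limit $m\to+\infty$. Using the weak and weak-$*$ convergences \eqref{cv-appro-3}--\eqref{cv-appro-5} (with $\widetilde f=f^N$) together with the weak-$*$ lower semicontinuity of $u\mapsto\E\sup_t\Vert u(t)\Vert_H^{2p}$ and of $u\mapsto\E\big(\int_0^T\Vert\nabla_r u\Vert_H^2\big)^p$, the limit inherits \eqref{borne-p-limit}, i.e. $\E\sup_{q\in[0,t]}\Vert f^N(q)\Vert_H^{2p}+(2\sigma^2)^p\E\big(\int_0^t\Vert\nabla_r f^N\Vert_H^2\big)^p\le \mathbf C\Vert f_0\Vert_H^{2p}e^{\mathbf C t}$ with the same $N$-independent $\mathbf C$. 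This furnishes the bound in $L^q(\Omega;L^\infty(0,T;H))$ for every even $q=2p>2$; for intermediate $2\le q\le 2p$ one interpolates by Jensen's inequality in $\Omega$, while $q=2$ comes from the transferred \autoref{Lemma9}, and the same limit, via \eqref{cv-appro-5}, gives the $L^2(\Omega;L^2(0,T;H))$ bound for $\nabla_r f^N$. The time-regularity statement is exactly \autoref{Lemma-tight1} applied to $f^N$, whose constant is $N$-uniform since its only $N$-dependent factor $C_1=[\sum_{k\in K}(\vert k\vert^2+1)(\theta^N_{\vert k\vert})^2]^{p/2}$ is controlled by \eqref{coefficient}. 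Setting $\mathbf K$ to be the maximum of these finitely many $N$-independent bounds concludes.

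I do not expect a genuinely hard analytic step, since the statement is a synthesis of three already-proved lemmas; the part that needs care is purely the bookkeeping of $N$-uniformity, which is guaranteed throughout by the shell scaling $\theta^N_{\vert k\vert}=a/\vert k\vert^2$ on $N\le\vert k\vert\le 2N$ via \eqref{alpha-N} and \eqref{coefficient}. A secondary technical point is to confirm the weak-$*$ measurability of $f^N:\Omega\to L^\infty(0,T;H)$, so that $f^N$ genuinely lies in $L^p_{w-*}(\Omega;L^\infty(0,T;H))$ and lower semicontinuity may be legitimately invoked there; this is already ensured by the construction in \autoref{Subsection-exist-proof}.
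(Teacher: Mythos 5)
Your proposal is correct and follows exactly the paper's route: the paper proves Proposition~\ref{Prop-boun-} precisely by combining \autoref{Lemma9}, \autoref{Lemma10} and \autoref{Lemma-tight1} with lower semicontinuity of the norms under the weak/weak-$*$ convergences \eqref{cv-appro-3}--\eqref{cv-appro-5}, which is what you do. Your additional bookkeeping of the $N$-uniformity of $\lambda$, $\mathbf{C}$ and $C_1$ via \eqref{alpha-N} and \eqref{coefficient} is accurate and consistent with the estimates in \autoref{Subsection-estimate-fm}.
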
 
\subsubsection{Tightness of laws}
Denote by $\mu_{f^N}$ the law of $f^N$, $\mu_W$ the law of $W:=(W^k)_{k\in\mathbb{Z}^2_0}$ and  their   joint   law $\mu_N$ defined on  $ C([0,T], U^\prime)\times  C([0,T]; H_0)$.
\begin{lemma}\label{tight-2}
	The set $\{ \mu_{f^N}; N\in \mathbb{N}\}$ is tight on $C([0,T], U^\prime)$.
\end{lemma}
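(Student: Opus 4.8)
The plan is to combine the uniform moment bounds from \autoref{Prop-boun-} with a compact embedding and the Arzelà--Ascoli theorem, and then to produce compact sets carrying most of the mass via Markov's inequality. First I would record the relevant compactness. By construction $U\hookrightarrow V$ is compact and $V\hookrightarrow H\hookrightarrow L^2(\T^2\times\R^2)$ continuously, so $U\hookrightarrow L^2(\T^2\times\R^2)$ is compact; the Gelfand triple $U\hookrightarrow L^2(\T^2\times\R^2)\equiv L^2(\T^2\times\R^2)\hookrightarrow U'$ then yields, by duality, that $L^2(\T^2\times\R^2)\hookrightarrow U'$ is compact. Since $H\hookrightarrow L^2(\T^2\times\R^2)$ continuously, the composed embedding $H\hookrightarrow U'$ is compact.

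Next, fixing $p>2$ and $\eta\in\,]0,\min(\tfrac{p-2}{p},\tfrac12)[$, for $R>0$ I would introduce
\[
K_R=\bigl\{\,f\in C([0,T],U') : \Vert f\Vert_{L^\infty(0,T;H)}\leq R \ \text{ and } \ \Vert f\Vert_{\mathcal{C}^\eta([0,T],U')}\leq R \,\bigr\}.
\]
The claim is that $\overline{K_R}$ is compact in $C([0,T],U')$. To see this I would verify the two hypotheses of Arzelà--Ascoli for $U'$-valued continuous functions: equicontinuity is immediate from the uniform Hölder bound $\Vert f\Vert_{\mathcal{C}^\eta([0,T],U')}\leq R$; and for each fixed $t$ the set $\{f(t):f\in K_R\}$ is bounded in $H$, hence relatively compact in $U'$ thanks to the compact embedding $H\hookrightarrow U'$ just established. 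These two facts give relative compactness of $K_R$ in $C([0,T],U')$.

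Finally, I would bound $\mu_{f^N}(K_R^c)$ uniformly in $N$ by Markov's inequality. From \autoref{Prop-boun-} the family $(f^N)_N$ is bounded by $\mathbf{K}$ both in $L^p(\Omega,L^\infty(0,T;H))$ and in $L^p(\Omega,\mathcal{C}^\eta([0,T],U'))$, so
\[
P\bigl(\Vert f^N\Vert_{L^\infty(0,T;H)}>R\bigr)+P\bigl(\Vert f^N\Vert_{\mathcal{C}^\eta([0,T],U')}>R\bigr)\leq \frac{2\mathbf{K}^p}{R^p}
\]
uniformly in $N$. Given $\epsilon>0$, choosing $R$ so large that $2\mathbf{K}^p/R^p<\epsilon$ yields $\mu_{f^N}(K_R)\geq 1-\epsilon$ for every $N$, which together with the compactness of $\overline{K_R}$ is exactly the asserted tightness on $C([0,T],U')$.

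I expect the only delicate point to be the pointwise-in-time control required in the Arzelà--Ascoli step: the available estimate is an $L^\infty(0,T;H)$ bound, hence a priori valid only for almost every $t$, whereas relative compactness for each \emph{fixed} $t$ needs $f^N(t)$ to lie in a fixed ball of $H$ for \emph{every} $t$. This gap is bridged by the weak continuity $f^N\in C_w([0,T];H)$ proved in \autoref{Section-exitence} together with the weak lower semicontinuity of $\Vert\cdot\Vert_H$, which guarantee $\Vert f^N(t)\Vert_H\leq \Vert f^N\Vert_{L^\infty(0,T;H)}$ for all $t$; everything else is routine.
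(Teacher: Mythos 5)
Your proof is correct and follows essentially the same route as the paper: uniform bounds in $L^p(\Omega,L^\infty(0,T;H))\cap L^p(\Omega,\mathcal{C}^\eta([0,T],U'))$ from \autoref{Prop-boun-}, compactness of the set of functions obeying both norm bounds in $C([0,T],U')$ via the compact embedding $H\hookrightarrow U'$, and Markov's inequality to conclude. The only cosmetic difference is that the paper invokes Simon's compactness theorem (the $p=\infty$ case) where you verify Arzelà--Ascoli by hand; your final remark about upgrading the a.e.-in-$t$ bound to an everywhere bound is the right observation and is handled correctly.
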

\begin{proof}
First,  we have $H \underset{compact}{\hookrightarrow} U^\prime$,   since   	$U \underset{compact}{\hookrightarrow} H$.
Let $\mathbf{A}$    be  a   subset  $C([0,T]; U^\prime)$.    Following   	\cite[Thm. 3]{Simon}	(the	case	$p=\infty$),	$\mathbf{A}$  is relatively compact  in    $C([0,T]; U^\prime)$ if  the following    conditions  hold.
\begin{enumerate}
	\item	$\mathbf{A}$	is	bounded	in	$L^\infty(0,T; H)$.
	\item	Let	$h>0$,	$\Vert	f(\cdot+h)-f(\cdot)\Vert_{L^\infty(0,T-h;U^\prime)}	\to	0$	as	$h\to0$	uniformly	for	$f\in	\mathbf{A}$.
\end{enumerate}   The   following   embedding  is  compact
$$ \mathbf{Z}:=L^\infty(0,T; H)\cap \mathcal{C}^{\eta}([0,T],U^\prime) \hookrightarrow \mathcal{C}([0,T], U^\prime),    \quad   0<\eta<\min(\dfrac{p-2}{p},\dfrac{1}{2}).$$
Indeed,	 Let	$	\mathbf{A}$	be	a	bounded	set	of	$\mathbf{Z}$.	
First,	note	that	(1)	is	satisfied	by	assumptions.	Concerning	the	second	condition,	let	$h>0$	and		$f\in	\mathbf{A}$,	by	using	that			$  f\in C^{\eta}([0,T],U^\prime)$	we	infer
\begin{align*}
	\Vert	f(\cdot+h)-f(\cdot)\Vert_{L^\infty(0,T-h;U^\prime)}=\sup_{r\in [0,T-h]}\Vert	f(r+h)-f(r)\Vert_{U^\prime}\leq	Ch^{\eta}\to	0,	\text{	as	}	h\to	0,
\end{align*}
where	$C>0$  is	independent	of	$f$. 
    Let $R>0$ and  set $ B_{\mathbf{Z}}(0,R):=\{ v\in \mathbf{Z} \; \vert\; \Vert v\Vert_{\mathbf{Z}} \leq R\}$. Then $B_{\mathbf{Z}}(0,R)$ is a compact  subset of  $C([0,T], U^\prime)$, by using      \autoref{Prop-boun-}  the following  holds
\begin{align*}
	\mu_{f^N}(B_{\mathbf{Z}}(0,R))&=1-\mu_{f^N}(B_{\mathbf{Z}}(0,R)^c)=1-\int_{\{ \omega \in \Omega, \Vert f^N\Vert_{\mathbf{Z}} >R\}}1dP\\
	&\geq 1-\dfrac{1}{R^p}\int_{\{ \omega \in \Omega, \Vert f^N\Vert_{\mathbf{Z}} >R\}}\Vert f^N\Vert_{\mathbf{Z}}^pdP\\
	&\geq 1-\dfrac{1}{R^p}\E\Vert f^N\Vert_{\mathbf{Z}}^p=1-\dfrac{\mathbf{K}^p}{R^p},\quad  \text{for any}\;  R>0, \quad \text{and any} 
	\;N\in \mathbb{N}.
\end{align*}
Therefore,  for any $\delta>0$ we can find $R_\delta >0$ such that $$\mu_{f^N}(B_{\mathbf{Z}}(0,R_\delta)) \geq 1-\delta, \text{ for all   } N\in \mathbb{N}. $$
Thus the family of laws $\{ \mu_{f^N}; N\in \mathbb{N}\}$ is tight on $C([0,T], U^\prime).$
\end{proof}
Notice    that    the family of  Brownian    motions  $W:=(W^k)_{k\in\mathbb{Z}^2_0}$   can  be  seen    as  cylindrical Wiener  process   defined on the	filtred	probability	space $(\Omega,\mathcal{F},P;(\mathcal{F}_t)_t)$ with    values  in appropriate separable   Hilbert  space   $H_0$,  more    precisely  $W^k=We_k, k\in\mathbb{Z}^2_0,$ where   $(e_k)_{k\in\mathbb{Z}^2_0}$   is  complete orthonormal system in a separable Hilbert space $\mathbb{H}$ and recall that the sample paths  of $W$ take values in a larger Hilbert space $H_0$ such that   $\mathbb{H}\hookrightarrow H_0$ defines a Hilbert–Schmidt  embedding.   
Hence,  $P$-a.s. the trajectories of $W$ 
belong to the space $C([0,T],H_0),$ see e.g.  \cite[Chapter 4]{Daprato}.   By
taking into account that the law $\mu_{W}$  is a Radon measure on  $C([0,T]; H_0)$,	we	obtain\begin{lemma}\label{tight-1}
	The set  $\{\mu_{W}\}$ is tight on   $C([0,T]; H_0)$.
\end{lemma}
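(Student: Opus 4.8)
The plan is to argue by soft analysis, reducing the statement to Ulam's theorem, which is available because $\{\mu_W\}$ is a \emph{single} measure. First I would record that $H_0$ is a separable Hilbert space, so that $C([0,T];H_0)$ equipped with the supremum norm $\|v\|_{C([0,T];H_0)}=\sup_{t\in[0,T]}\|v(t)\|_{H_0}$ is a separable Banach space, hence a Polish (completely metrizable and separable) space. Completeness is the standard fact that a uniform limit of continuous $H_0$-valued maps is continuous together with completeness of $H_0$; separability follows because $[0,T]$ is compact and $H_0$ is separable, so the piecewise-linear maps with nodes at rational times and values in a fixed countable dense subset of $H_0$ form a countable dense family.

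Next I would invoke the fact, already recalled in the text via the Hilbert--Schmidt embedding $\mathbb{H}\hookrightarrow H_0$ and \cite[Chapter 4]{Daprato}, that $W$ has $P$-almost surely continuous $H_0$-valued trajectories and that $\mu_W$ is a well-defined Borel (indeed Radon) probability measure on $C([0,T];H_0)$. The conclusion is then immediate from Ulam's theorem: on a Polish space every finite Borel measure is inner regular with respect to compact sets, and for a probability measure this inner regularity is precisely the assertion that for every $\delta>0$ there exists a compact $K_\delta\subset C([0,T];H_0)$ with $\mu_W(K_\delta)\geq 1-\delta$, i.e. tightness of the singleton family $\{\mu_W\}$.

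Since the argument is abstract, there is essentially no technical obstacle; the only point deserving care is the justification that $\mu_W$ is genuinely carried by the continuous paths, which is exactly the path regularity granted by realizing the cylindrical process $W$ as an $H_0$-valued process through the Hilbert--Schmidt embedding. If one preferred an explicit construction of the compact sets $K_\delta$, parallel to the proof of \autoref{tight-2}, one could instead combine a Kolmogorov--Chentsov estimate (via Burkholder--Davis--Gundy and the Gaussian bound $\E\|W_t-W_s\|_{H_0}^p\leq C_p\,|t-s|^{p/2}$, the induced covariance on $H_0$ being trace class) giving $\E\|W\|_{\mathcal{C}^{\eta}([0,T];H_0)}^p<\infty$ for some $\eta\in(0,\tfrac12)$ and $p>2$, with a uniform control of the projection tails $\E\sup_{t\in[0,T]}\|(\mathrm{I}-P_M)W_t\|_{H_0}^2\to 0$ as $M\to\infty$. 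Sets of trajectories that are uniformly H\"older in time and have uniformly small tails beyond $P_M$ are relatively compact in $C([0,T];H_0)$ by Arzel\`a--Ascoli (equicontinuity from the H\"older bound, and pointwise relative compactness in $H_0$ from boundedness plus uniform tail smallness), and a Markov-inequality argument exactly as in \autoref{tight-2} then yields $\mu_W(K_\delta)\geq 1-\delta$. The abstract route is cleaner, so that is the one I would follow.
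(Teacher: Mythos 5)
Your proposal is correct and follows essentially the same route as the paper: the authors also deduce tightness of the singleton $\{\mu_W\}$ from the facts that $W$ has $P$-a.s.\ continuous $H_0$-valued paths (via the Hilbert--Schmidt embedding) and that its law is a Radon measure on the Polish space $C([0,T];H_0)$, which is precisely inner regularity with respect to compacts, i.e.\ Ulam's theorem. The explicit Kolmogorov--Chentsov/Arzel\`a--Ascoli construction you sketch as an alternative is not needed and is not what the paper does.
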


\subsubsection{Prokhorov  and Skorokhod's representation's theorem}\label{Subsection-Prokh-SKo}
Thanks to  \autoref{tight-2}    and \autoref{tight-1}, by Skorokhod's representation's theorem (see    e.g.       \cite[Thm. 1.10.4, p.
59]{Vaart-Well1996}), by  passing to    the limit   up  to subsequences (denoted    by  the same    way), we can find a    new  probability space,     denoted by  the same    way  "for simplicity"    $(\Omega,\mathcal{F},P)$   and processes \begin{align*}
    \left(\widetilde{f}^N,\ W^N:=\{W^{N,k}\}_{\substack{k\in \mathbb{Z}^2_0}}\right),\quad   \left(\overline{f},\ \overline{W}:=\{\overline{W}^{k}\}_{\substack{k\in \mathbb{Z}^2_0}}\right),
\end{align*} such that: 
\begin{itemize}
\item[$(a)$]    $ \mathcal{L}\left(\widetilde{f}^N,\ W^N:=\{W^{N,k}\}_{\substack{k\in \mathbb{Z}^2_0}}\right)=\mathcal{L} \left(f^N,\ W:=\{W^{k}\}_{\substack{k\in \mathbb{Z}^2_0}}\right)$\footnote{Given a random variable $\xi$ with values in space $E$,  its law
	$
	\mathcal{L}(\xi)(\Gamma)=P(\xi\in\Gamma)$  for any Borel subset $\Gamma \text{ of } E.
	$} on  $C([0,T], U^\prime)\times   C([0,T], H_0)$.
\item[$(b)$] the   following   convergences hold
\begin{align}
    &\widetilde{f}^N\rightarrow \overline{f} \quad \text{in } C([0,T];U^\prime) \quad P-a.s.\label{cv-fN-time}\\
   & W^N\rightarrow \overline{W} \quad \text{in } C([0,T];H_0 )\quad P-a.s.
\end{align}
\end{itemize}
On  the other   hand,   thanks  to  \autoref{Prop-boun-},   $\mathcal{L}(f^N)(\mathcal{X} )=1$ where $\mathcal{X}    \underset{cont.}{\hookrightarrow} C([0,T], U^\prime)$,  with
$$  \mathcal{X}=\{g: \quad  g\in   L^\infty(0,T;H)\cap  C^\eta([0,T],U^\prime);    \nabla_r  g\in L^2(0,T;H)   \}.$$
 	 By   using   the point   $(a)$   above,  one gets    $\mathcal{L}(\widetilde{f}^N)(\mathcal{X})=1 $ and      $(\widetilde{f}^N,W^N)$   satisfies   the point   (3) of  \autoref{Def-sol}.    Moreover,   $(\widetilde{f}^N)_N$ satisfies   the estimates   of \autoref{Prop-boun-}  in  the new probability space.   Thus,
we  have       the following   result.
 \begin{lemma}\label{lem-cv-1} 
 	There exists $\overline{f}\in L^2(\Omega ;L^2([0, T];H)),$    $\nabla_r\overline{f}\in L^2(\Omega ;L^2([0, T];H))$   
 	such that
 	   \begin{align*}
 	\widetilde{f}^N \rightharpoonup \overline{f} &\text{		in	}  L^2(\Omega ;L^2([0, T];H)),\\
   \nabla_r\widetilde{f}^N \rightharpoonup \nabla_r\overline{f} &\text{		in	}  L^2(\Omega ;L^2([0, T];H)).
 	\end{align*}
  Moreover,    $ \overline{f}\in  L^2_{w-*}(\Omega ;L^\infty([0, T];H))$ and $\widetilde{f}^N \rightharpoonup^* \overline{f} \text{		in	}  L^2_{w-*}(\Omega ;L^\infty([0, T];H)).$
 	\end{lemma}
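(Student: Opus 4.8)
The plan is to read off all four convergences from the uniform bounds of \autoref{Prop-boun-}, transferred to the Skorokhod copies by the equality of laws in point $(a)$ of \autoref{Subsection-Prokh-SKo}; the genuine content is the \emph{identification} of the various weak limits with the almost sure limit $\overline{f}$ of \eqref{cv-fN-time}. First I would note that $\mathcal{L}(\widetilde{f}^N)=\mathcal{L}(f^N)$ together with $\mathcal{L}(\widetilde{f}^N)(\mathcal{X})=1$ makes the estimates of \autoref{Prop-boun-} valid verbatim for $\widetilde{f}^N$; in particular $(\widetilde{f}^N)_N$ and $(\nabla_r\widetilde{f}^N)_N$ are bounded in the Hilbert space $L^2(\Omega\times[0,T];H)$, and $(\widetilde{f}^N)_N$ is bounded in $L^2_{w-*}(\Omega;L^\infty([0,T];H))$. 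Reflexivity of $L^2(\Omega\times[0,T];H)$ yields, along a subsequence (not relabelled), weak limits $\widetilde{f}^N\rightharpoonup g$ and $\nabla_r\widetilde{f}^N\rightharpoonup\widetilde{g}$, while a weak-$*$ compactness argument (Banach--Alaoglu) gives a weak-$*$ limit in $L^2_{w-*}(\Omega;L^\infty([0,T];H))$.

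The core step is to show $g=\overline{f}$. I would test against elementary product functions: fix $A\in\mathcal{F}$, $\xi\in L^2(0,T)$ and $\chi\in U$, and set $\psi:=\chi/(1+\vert r\vert^2)$, which lies in $H$ because $\chi\in U\subset H$. Then $(\,\cdot\,,\psi)_H=(\,\cdot\,,\chi)=\langle\,\cdot\,,\chi\rangle$ on $H$, so weak convergence in $L^2(\Omega\times[0,T];H)$ gives
\[
\E\int_0^T \mathbb{I}_A\,\xi(s)\,\langle\widetilde{f}^N(s),\chi\rangle\,ds\longrightarrow \E\int_0^T \mathbb{I}_A\,\xi(s)\,\langle g(s),\chi\rangle\,ds.
\]
On the other hand \eqref{cv-fN-time} yields $\langle\widetilde{f}^N(s),\chi\rangle\to\langle\overline{f}(s),\chi\rangle$ uniformly in $s$, $P$-a.s.; since $(\widetilde{f}^N)_N$ is bounded in $L^p(\Omega;L^\infty(0,T;H))$ for some $p>2$, the random variables $\int_0^T\mathbb{I}_A\xi\,\langle\widetilde{f}^N,\chi\rangle\,ds$ are uniformly integrable, and Vitali's theorem lets me pass to the limit with $\overline{f}$ in place of $g$. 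Comparing the two limits and using that $U$ is dense in $L^2(\T^2\times\R^2)$ forces $g=\overline{f}$.

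The gradient is then identified by closedness: each pair $(\widetilde{f}^N,\nabla_r\widetilde{f}^N)$ lies in the graph of the distributional $r$-gradient, a closed and therefore weakly closed subspace of $L^2(\Omega\times[0,T];H)\times L^2(\Omega\times[0,T];H)$, so the weak limit $(\overline{f},\widetilde{g})$ lies in it as well, i.e.\ $\widetilde{g}=\nabla_r\overline{f}$. For the weak-$*$ statement I would use that $L^2_{w-*}(\Omega;L^\infty([0,T];H))$ embeds continuously into $L^2(\Omega\times[0,T];H)$ on the finite interval $[0,T]$: the weak-$*$ limit therefore also acts as a weak $L^2$ limit and hence coincides with $g=\overline{f}$, while lower semicontinuity of the norm under weak-$*$ convergence gives $\overline{f}\in L^2_{w-*}(\Omega;L^\infty([0,T];H))$ with $\E\Vert\overline{f}\Vert_{L^\infty(0,T;H)}^2\le\mathbf{K}^2$. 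As the limit is always $\overline{f}$, the identification is independent of the subsequence extracted.

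I expect the main difficulty to be the mismatch of pairings created by the weighted spaces. The almost sure convergence \eqref{cv-fN-time} is a statement in the $U^\prime$--$U$ duality, hence against the \emph{unweighted} $L^2(\T^2\times\R^2)$ pairing $\langle\cdot,\cdot\rangle$, whereas the target convergences are phrased through the weighted inner product of $H=L^2_{r,2}$. Reconciling the two forces the multiplier $\chi\mapsto\chi/(1+\vert r\vert^2)$ used above, and one must check it maps $U$ into $H$ (which it does, since the weight only improves decay) so the test functions stay admissible on both sides. The second delicate point, and the place where the higher-moment bound $p>2$ of \autoref{Prop-boun-} is essential, is the passage from $P$-almost sure (pointwise in $\omega$) convergence to convergence of expectations via uniform integrability; a mere second moment would not suffice to identify the weak limit with $\overline{f}$.
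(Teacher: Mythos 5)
Your proof is correct, and it is in fact more complete than the one in the paper. The paper's own argument consists only of the compactness half: it invokes \autoref{Prop-boun-}, a diagonal extraction and Banach--Alaoglu in $L^2(\Omega;L^2([0,T];H))$ and $L^2_{w-*}(\Omega;L^\infty([0,T];H))$, and simply \emph{names} the weak limit $\overline{f}$, the same symbol used for the $P$-a.s.\ limit in $C([0,T];U')$ from \eqref{cv-fN-time}. The identification of these two limits -- which is genuinely needed, since \autoref{main-thm-2} asserts both convergences for one and the same object and the subsequent passage to the limit uses them interchangeably -- is left implicit. Your core step supplies exactly this: the weighted/unweighted pairing reconciliation via the multiplier $\chi\mapsto\chi/(1+\vert r\vert^2)$ (which indeed maps $U\subset L^2$ into $H$, since $\Vert\chi/(1+\vert r\vert^2)\Vert_H\le\Vert\chi\Vert_{L^2}$ and $(\cdot,\chi/(1+\vert r\vert^2))_H=\langle\cdot,\chi\rangle$), Vitali's theorem on the $\omega$-variable, and density of $U$ in $L^2(\T^2\times\R^2)$; the gradient is then pinned down by weak closedness of the graph of $\nabla_r$, which is the standard route and matches what the paper tacitly assumes by writing $\nabla_r\overline{f}$ for the second limit. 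The only quibble is your closing remark that ``a mere second moment would not suffice'': a uniform bound in $L^2(\Omega)$ of the scalar random variables $\int_0^T\mathbb{I}_A\xi\,\langle\widetilde{f}^N,\chi\rangle\,ds$ already yields their uniform integrability, so $p=2$ is enough for this particular Vitali argument; the genuinely higher moments $p>2$ of \autoref{Prop-boun-} are needed elsewhere (for the H\"older/tightness estimates of \autoref{Lemma-tight1}), not here. This does not affect the validity of your proof.
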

  \begin{proof}
 By using       \autoref{Prop-boun-},   diagonal extraction argument    and
   Banach–Alaoglu theorem   
  in the spaces $L^2(\Omega ;L^2([0, T];H))$    and $L^2_{w-*}(\Omega ;L^\infty([0, T];H)),$
   there exist $\overline{f},\nabla_r\overline{f} \in L^2(\Omega ;L^2([0, T];H))$ such that
    \begin{align}\label{cv1}
  \widetilde{f}^N \rightharpoonup \overline{f} &\text{		in	}  L^2(\Omega ;L^2([0, T];H)),
  \\
   \nabla_r\widetilde{f}^N \rightharpoonup \nabla_r\overline{f} &\text{		in	}  L^2(\Omega ;L^2([0, T];H)).
  \end{align}
  Moreover, $\overline{f}   \in L^2_{w-*}(\Omega ;L^\infty([0, T];H)))$ and the following   convergence holds
  \begin{align}\label{cv2}
   \widetilde{f}^N \rightharpoonup^* \overline{f} &\text{		in	}  L^2_{w-*}(\Omega ;L^\infty([0, T];H)),
   \end{align}  
   since  $(\widetilde{f}^N)_N$ is bounded in $L^2(\Omega ;L^\infty([0, T];H))$ and $ L^2_{w-*}(\Omega ;L^\infty([0, T];H))\simeq \left(L^2(\Omega ;L^1([0, T];H^\prime)) \right)^\prime .$  
   
       \end{proof} 	
\subsection{Passage to the limit as $N\to+\infty$}
        In the following,   we  will    establish   some    lemmas  to  pass    to  the limit   as  $N\to   +\infty.$   For that,     consider  $\phi  \in C^\infty(\mathbb{T}^2)$,  $\psi  \in C^\infty_c(\mathbb{R}^2)$\footnote{Note that         $\phi\otimes    \psi    \in V,   $    thus    it  is  an  appropriate test    function    in  \autoref{Def-sol},  point   (3). },   and $A\in \mathcal{F}$.
      
 \begin{lemma}\label{lem-cv-2}
 	Let   $t\in]0,T[,$    the   following  convergence holds:    $$ \displaystyle\lim_N\alpha_N\int_A\int_0^t\int_{\mathbb{T}^2}\int_{\mathbb{R}^2}\nabla_x\widetilde{f}^N(x,r,s) \cdot\nabla_x\phi(x)\psi(r) dr dxdsdP=0.
 $$
 \end{lemma}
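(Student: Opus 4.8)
The plan is to overcome the one genuine difficulty here, namely that we have no control whatsoever on $\nabla_x\widetilde f^N$: equation \eqref{Ito-FP} is hyperbolic in the space variable $x$ and does not regularize in $x$, so the factor $\nabla_x\widetilde f^N$ appearing in the statement cannot be estimated in $L^2$. The idea is to move the $x$-derivative onto the smooth test function by integration by parts, and then to exploit the decay of the coefficient $\alpha_N$ together with the uniform $H$-bound on $\widetilde f^N$.

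First I would integrate by parts in $x$. Since $x\in\T^2$ is periodic there is no boundary contribution, and as $\phi\in C^\infty(\T^2)$, $\psi\in C^\infty_c(\R^2)$ one has, for a.e. $(s,\omega)$,
\[
\int_{\mathbb{T}^2}\int_{\mathbb{R}^2}\nabla_x\widetilde f^N\cdot\nabla_x\phi\,\psi\,dr\,dx
=-\int_{\mathbb{T}^2}\int_{\mathbb{R}^2}\widetilde f^N\,\Delta_x\phi\,\psi\,dr\,dx .
\]
This is precisely the form in which the $\alpha_N$-term already enters the weak formulation of \autoref{Def-sol}, point (3), so that no extra regularity of $\widetilde f^N$ in $x$ is required to give the expression a meaning; the right hand side now involves only $\widetilde f^N$ and not its $x$-gradient.

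Next I would estimate the resulting integral. Writing it as the $H$-inner product $\big(\widetilde f^N(s),(1+|r|^2)^{-1}\Delta_x\phi\,\psi\big)_H$ and applying Cauchy--Schwarz yields
\[
\Big|\int_{\mathbb{T}^2}\int_{\mathbb{R}^2}\widetilde f^N\,\Delta_x\phi\,\psi\,dr\,dx\Big|
\leq C_{\phi,\psi}\,\|\widetilde f^N(s)\|_H,
\qquad
C_{\phi,\psi}:=\Big\|\frac{\Delta_x\phi\,\psi}{1+|r|^2}\Big\|_H<\infty ,
\]
the finiteness of $C_{\phi,\psi}$ being clear since $\psi$ has compact support in $r$ and $\phi$ is smooth. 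Integrating over $[0,t]\times A$, bounding $\mathbb{1}_A\le1$, and using Cauchy--Schwarz in time together with the uniform estimate of \autoref{Prop-boun-} (taken with $p=2$), I obtain
\[
\alpha_N\Big|\int_A\int_0^t\int_{\mathbb{T}^2}\int_{\mathbb{R}^2}\widetilde f^N\,\Delta_x\phi\,\psi\,dr\,dx\,ds\,dP\Big|
\leq \alpha_N\,C_{\phi,\psi}\,T\,\Big(\E\sup_{[0,T]}\|\widetilde f^N\|_H^2\Big)^{1/2}
\leq \alpha_N\,C_{\phi,\psi}\,T\,\mathbf{K}.
\]

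Finally, the conclusion follows from $\alpha_N\to0$, recorded in \eqref{alpha-N}, while $C_{\phi,\psi}$, $T$ and $\mathbf{K}$ are independent of $N$. The only conceptual point — the apparent need to control $\nabla_x\widetilde f^N$ — is exactly what is resolved by combining the integration by parts (legitimate, because the derivative then lands on a smooth function) with the decay $\alpha_N=O(N^{-3})$; beyond this observation the argument is entirely routine, and I expect no further obstacle.
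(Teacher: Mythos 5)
Your argument is correct and coincides with the paper's own proof: both integrate by parts in $x$ to transfer the gradient onto the smooth test function $\phi$, bound the remaining integral by an $N$-independent constant via the uniform estimate on $\widetilde f^N$ (the paper uses the $L^2(\Omega\times[0,T];L^2)$ norm, you use the equivalent $H$-norm bound from \autoref{Prop-boun-}), and conclude from $\alpha_N\to 0$ in \eqref{alpha-N}. No gap.
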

  \begin{proof} 	
  	Thanks to Fubini's theorem and by using integration by part with respect to $x$, we get
  	\begin{align*}
  	&\vert \alpha_N\int_A\int_0^t\int_{\mathbb{T}^2}\int_{\mathbb{R}^2}\nabla_x\widetilde{f}^N(x,r,s) \cdot\nabla_x\phi(x)\psi(r) dr dxdsdP\vert\\&=\vert\alpha_N\int_A\int_0^t\int_{\mathbb{R}^2}\int_{\mathbb{T}^2}\widetilde{f}^N(x,r,s) \Delta_x\phi(x)\psi(r)  dxdrdsdP\vert.
  	\\&\qquad\leq C\alpha_N\Vert \widetilde{f}^N\Vert_{L^2(\Omega\times[0, T];L^2(\mathbb{T}^2\times\mathbb{R}^2)))}\Vert \Delta_x\phi\Vert_{\infty}\Vert \psi\Vert_{\infty},
  	\end{align*}
  	where $C$ is a constant depends only on the measure of $supp(\psi), T$  and the volume of $\mathbb{T}^2.$ We recall  from \eqref{alpha-N}    that    $\displaystyle\lim_N\alpha_N=0$ and  the result follows. 
  \end{proof} 	

  \begin{lemma}\label{lem-cv-3}
 	Let   $t\in]0,T[,$    we have \begin{align*}
 	&\lim_N- \dfrac{1}{2}\int_A\int_0^t\int_{\mathbb{T}^2}\int_{\mathbb{R}^2}(\sum_{k\in K}\left((\nabla \sigma_k^Nr) \otimes (\nabla \sigma_k^Nr)\right) \nabla_r\widetilde{f}^N(x,r,s))\cdot\nabla_r\psi(r) \phi(x) dr dxdsdP\\&=-\dfrac{1}{2}\int_A\int_0^t\int_{\mathbb{T}^2}\int_{\mathbb{R}^2}A(r) \nabla_r\overline{f}(x,r,s))\cdot\nabla_r\psi(r) \phi(x)  dr dxdsdP.
 	\end{align*}
 \end{lemma}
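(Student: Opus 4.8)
The plan is to split the covariance matrix into its limit plus a remainder, exactly as provided by \autoref{lem-matrix-r}, namely
\[
\sum_{k\in K}\left((\nabla \sigma_k^Nr) \otimes (\nabla \sigma_k^Nr)\right)=A(r)+\frac{1}{N}R^N(r),
\]
where the remainder is $O(1/N)$ times a second degree polynomial $P(r)$, so that $|R^N(r)|\leq C(1+|r|^2)$ uniformly in $N$. Note in particular that this matrix is independent of $x$. I would accordingly write the quantity to be passed to the limit as a main term carrying $A(r)$ plus an error term carrying $\frac{1}{N}R^N(r)$, and treat the two separately.

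For the main term I would exploit the weak convergence $\nabla_r\widetilde f^N\rightharpoonup\nabla_r\overline f$ in $L^2(\Omega;L^2([0,T];H))$ furnished by \autoref{lem-cv-1}. Since $A(r)$ is symmetric, one has $A(r)\nabla_r\widetilde f^N\cdot\nabla_r\psi=\nabla_r\widetilde f^N\cdot A(r)\nabla_r\psi$, so the main term can be rewritten as
\[
\mathbb{E}\left[\mathbb{I}_A\int_0^t\left(\nabla_r\widetilde f^N(s),\,G\right)_H ds\right],\qquad G:=\frac{A(r)\nabla_r\psi\,\phi}{1+|r|^2},
\]
with $(\cdot,\cdot)_H$ the weighted inner product. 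The crucial observation is that, although $A(r)$ grows quadratically, the factor $\nabla_r\psi$ is compactly supported (as $\psi\in C_c^\infty(\mathbb{R}^2)$); hence $A(r)\nabla_r\psi$ is bounded with compact support, $G$ is a bounded compactly supported function, so $G\in H$ and $\mathbb{I}_A\,\mathbb{I}_{[0,t]}\,G\in L^2(\Omega;L^2([0,T];H))$. Testing the weak convergence against this $G$ gives the claimed limit, the weight $(1+|r|^2)$ cancelling the division by $1+|r|^2$ to reproduce the unweighted integral against $A(r)\nabla_r\overline f$.

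For the error term I would estimate directly. Using $|R^N(r)|\leq C(1+|r|^2)$ together with Cauchy--Schwarz in $H$ via the splitting $(1+|r|^2)=(1+|r|^2)^{1/2}(1+|r|^2)^{1/2}$, the spatial integral at fixed $s$ is bounded by $\frac{C}{N}\,\|\nabla_r\widetilde f^N(s)\|_H\,\big\|\,|\nabla_r\psi|\,|\phi|\,\big\|_H$, the last factor being finite thanks to the compact support of $\psi$. Integrating in $s$ (Cauchy--Schwarz in time) and taking expectation yields a bound of order $\frac{1}{N}\,\|\nabla_r\widetilde f^N\|_{L^2(\Omega;L^2([0,T];H))}$, which tends to $0$ because this norm is bounded uniformly in $N$ by \autoref{Prop-boun-}. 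Combining the two contributions gives the convergence.

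I expect the only genuinely delicate point to be the bookkeeping with the polynomial weights: one must ensure that the quadratic growth of $A(r)$ and of the remainder is always absorbed either by the compact support of $\psi$ (in the main term, to place $G$ in $H$) or by the uniform $H$-bound on $\nabla_r\widetilde f^N$ together with the extra $1/N$ (in the error term). Once the weights are tracked carefully both steps are routine; in particular no tightness or almost sure convergence of $\widetilde f^N$ is needed here, only the weak convergence of $\nabla_r\widetilde f^N$ in $L^2(\Omega;L^2([0,T];H))$ and the quantitative $O(1/N)$ estimate of \autoref{lem-matrix-r}.
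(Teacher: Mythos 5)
Your proof is correct and follows essentially the same route as the paper: both split the covariance via \autoref{lem-matrix-r} into $A(r)$ plus an $O(1/N)$ remainder, pass to the limit in the main term using the weak convergence of $\nabla_r\widetilde f^N$ from \autoref{lem-cv-1}, and show the remainder vanishes. The only cosmetic difference is that the paper moves the remainder onto the test function by integration by parts before estimating, whereas you bound it directly by Cauchy--Schwarz against the uniform bound on $\nabla_r\widetilde f^N$ in $L^2(\Omega;L^2([0,T];H))$; your explicit check that $A(r)\nabla_r\psi\,\phi/(1+\vert r\vert^2)$ lies in $H$ thanks to the compact support of $\psi$ is a detail the paper leaves implicit.
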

  \begin{proof} 
  	By using  	 \autoref{lem-matrix-r}, we get
  	\begin{align}\label{cv-3-*}
  	&- \dfrac{1}{2}\int_A\int_0^t\int_{\mathbb{T}^2}\int_{\mathbb{R}^2}(\sum_{k\in K}\left((\nabla \sigma_k^Nr) \otimes (\nabla \sigma_k^Nr)\right) \nabla_r\widetilde{f}^N(x,r,s))\cdot\nabla_r\psi(r) \phi(x) dr dxdsdP\notag\\&=-\dfrac{1}{2}\int_A\int_0^t\int_{\mathbb{T}^2}\int_{\mathbb{R}^2}A(r) \nabla_r\widetilde{f}^N(x,r,s))\cdot\nabla_r\psi(r) \phi(x) dr dxdsdP+R_N,
  	\end{align}
  	where $R_N$ satisfies
  		\begin{align*}
  	\vert	R_N\vert  &\leq  \dfrac{C}{N} 	\int_A\int_0^T\int_{\mathbb{T}^2}\int_{\mathbb{R}^2}\vert P(r)\vert \vert\widetilde{f}^N(x,r,s)\vert\cdot\vert D_r^2\psi(r)\phi(x)\vert\vert\vert dr dxdsdP \\
  &\leq\dfrac{C}{N} 	\Vert \widetilde{f}^N\Vert_{L^2(\Omega\times[0, T];H))}\Vert D_r^2\psi\Vert_{\infty}\Vert \phi\Vert_{\infty} \to 0   \text{  as  }   N\to    +\infty.
  		 	\end{align*}
  		 	Now, by passing to the limit in \eqref{cv-3-*} and using  \autoref{lem-cv-1}, the conclusion follows.
  \end{proof} 	
Concerning  the stochastic  integral    part,   we  have    the following   result.
		\begin{lemma}\label{lem-stoch-vanishes}     Let $t\in   ]0,T],$  the following   convergence holds
		\begin{align*}
\lim_N\vert\E\int_0^t\sum_{k\in K}\int_{\mathbb{T}^2}\int_{\mathbb{R}^2}\widetilde{f}^N(x,r,s)\left(\sigma_k^N.\nabla_x\phi(x)\psi(r)  +(\nabla \sigma_k^Nr).\nabla_r \psi(r) \phi(x)\right) drdxdW^{N,k}(s)1_A\vert=0.
	 	\end{align*}
\end{lemma}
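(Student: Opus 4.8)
The plan is to bound the expectation of the stochastic integral by its second moment and then exploit the orthogonality of the noise modes in the $x$-variable. Write $g_k(x,r):=\sigma_k^N(x)\cdot\nabla_x\phi\,\psi(r)+(\nabla\sigma_k^Nr)(x)\cdot\nabla_r\psi(r)\,\phi(x)$ and $M_N(t):=\sum_{k\in K}\int_0^t(\widetilde f^N(s),g_k)\,dW^{N,k}(s)$, where $(\cdot,\cdot)$ is the unweighted $L^2(\T^2\times\R^2)$ pairing appearing in the statement. Since the Skorokhod construction yields $(\widetilde f^N,W^N)$ with the same law as $(f^N,W)$ and satisfying \autoref{Def-sol}(3), the process $M_N$ is a square-integrable martingale with predictable bracket $\int_0^t\sum_{k\in K}(\widetilde f^N(s),g_k)^2\,ds$ (equivalently, this expected bracket equals the same deterministic functional of $f^N$, by identity of laws). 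Using $|1_A|\le 1$, the Cauchy--Schwarz inequality in $\Omega$ and the Itô isometry, I would first reduce the claim to
\[
\Big|\E\big[1_A M_N(t)\big]\Big|\le\big(\E\,M_N(t)^2\big)^{1/2}=\Big(\E\int_0^t\sum_{k\in K}(\widetilde f^N(s),g_k)^2\,ds\Big)^{1/2},
\]
so it suffices to show the integrand is $\le C\varepsilon_N\Vert\widetilde f^N(s)\Vert_H^2$ with $\varepsilon_N\to0$, because $\E\int_0^t\Vert\widetilde f^N(s)\Vert_H^2\,ds\le C$ uniformly in $N$ by \autoref{Prop-boun-}.

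I would split $g_k=g_k^x+g_k^r$ into the transport and stretching parts. For the transport part, $|\sigma_k^N|=\theta_k^N$ and Cauchy--Schwarz against the weight $(1+|r|^2)$ give the uniform bound $|(\widetilde f^N,g_k^x)|\le\theta_k^N\Vert\widetilde f^N\Vert_H\Vert\nabla_x\phi\Vert_{L^2(\T^2)}\Vert\psi\Vert_{L^2(\R^2)}$, whence $\sum_{k\in K}(\widetilde f^N,g_k^x)^2\le C\Vert\widetilde f^N\Vert_H^2\sum_{k\in K}(\theta_k^N)^2\to0$ by the first limit in \eqref{coefficient}. For the stretching part, \eqref{nabla-sigma} gives, for $k\in K_+$, $g_k^r(x,r)=-\theta_k^N|k|\,(\hat k\cdot r)(\hat k^\perp\cdot\nabla_r\psi(r))\,\phi(x)\sin(k\cdot x)$ with $\hat k=k/|k|$; setting $b_k(r):=(\hat k\cdot r)(\hat k^\perp\cdot\nabla_r\psi(r))$ and $c_k(r):=\int_{\T^2}\widetilde f^N(x,r)\phi(x)\sin(k\cdot x)\,dx$ (with $\cos$ for $k\in K_-$) one has $(\widetilde f^N,g_k^r)=-\theta_k^N|k|\int_{\R^2}b_k(r)c_k(r)\,dr$.

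Since $|b_k(r)|\le|r|\,|\nabla_r\psi(r)|$, the quantity $\int_{\R^2}|b_k|^2(1+|r|^2)^{-1}\,dr\le C_\psi^2$ is bounded uniformly in $k$, and Cauchy--Schwarz in $r$ yields $|(\widetilde f^N,g_k^r)|^2\le(\theta_k^N)^2|k|^2\,C_\psi^2\int_{\R^2}(1+|r|^2)|c_k(r)|^2\,dr$. Summing over the shell, using $(\theta_k^N)^2|k|^2=a^2/|k|^2\le a^2/N^2$ and Bessel's inequality $\sum_{k}|c_k(r)|^2\le C\Vert\phi\Vert_\infty^2\Vert\widetilde f^N(\cdot,r)\Vert_{L^2(\T^2)}^2$ for the Fourier coefficients in $x$ of the fixed $L^2$ function $\widetilde f^N(\cdot,r)\phi$, I obtain
\[
\sum_{k\in K}(\widetilde f^N,g_k^r)^2\le\frac{a^2}{N^2}\,C_\psi^2\,C\Vert\phi\Vert_\infty^2\int_{\R^2}(1+|r|^2)\Vert\widetilde f^N(\cdot,r)\Vert_{L^2(\T^2)}^2\,dr=\frac{C}{N^2}\Vert\widetilde f^N\Vert_H^2.
\]
Combining the two parts gives the integrand bound with $\varepsilon_N=\sum_{k\in K}(\theta_k^N)^2+N^{-2}\to0$, and the lemma follows.

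I expect the stretching estimate to be the main obstacle. A crude per-mode bound combined with the Itô isometry only gives $\sum_{k\in K}(\widetilde f^N,g_k^r)^2\lesssim\Vert\widetilde f^N\Vert_H^2\sum_{k\in K}(\theta_k^N)^2|k|^2$, and $\sum_{k\in K}(\theta_k^N)^2|k|^2$ is \emph{bounded but does not vanish}---it is precisely the quantity producing $k_T$ in \autoref{lem-matrix-r}. The decay therefore cannot come from the size of the coefficients alone; it relies on the orthogonality in $x$ (Bessel's inequality), which keeps $\sum_{k}\int(1+|r|^2)|c_k|^2\,dr$ controlled by $\Vert\widetilde f^N\Vert_H^2$ uniformly over the shell while the per-mode factor $(\theta_k^N)^2|k|^2$ decays like $N^{-2}$. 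This is exactly the point at which the martingale (off-diagonal) part dies, whereas the diagonal Itô corrector $\tfrac12\Div_r(A_k^N\nabla_rf)$---built from $\sin^2/\cos^2$ rather than a single oscillation---survives and produces the limiting diffusion $\tfrac12\Div_r(A(r)\nabla_r\overline f)$.
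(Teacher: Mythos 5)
Your proof is correct and follows essentially the same route as the paper: reduce to the second moment via the It\^o isometry, then kill the stretching part by combining Bessel's inequality in $x$ (orthogonality of the trigonometric modes) with $\sup_{k}|k|^2(\theta_k^N)^2=a^2/N^2\to 0$, which is exactly the paper's argument for its term $I_2^N$. The only cosmetic difference is in the transport part, where you use the summed bound $\sum_{k}(\theta_k^N)^2\to 0$ in place of the paper's Bessel-plus-$\sup_{k}(\theta_k^N)^2$ estimate; both work, and your closing remark correctly identifies orthogonality, not coefficient size, as the essential mechanism for the stretching term.
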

\begin{proof}
    Let   $A\in \mathcal{F}$, $\phi  \in C^\infty(\mathbb{T}^2)$ and  $\psi  \in C^\infty_c(\mathbb{R}^2)$,    by  using   It\^o isometry    we  get
    \begin{align*}&\E\left(\int_0^t\sum_{k\in K}\int_{\mathbb{T}^2}\int_{\mathbb{R}^2}\widetilde{f}^N(x,r,s)\left(\sigma_k^N.\nabla_x\phi(x)\psi(r)  +(\nabla \sigma_k^Nr).\nabla_r \psi(r) \phi(x)\right) drdxdW^{N,k}(s)1_A\right)^2\\&=
  \E\int_0^t\sum_{k\in K}\left[\int_{\mathbb{R}^2}\int_{\mathbb{T}^2}1_A\widetilde{f}^N(x,r,s)\left(\sigma_k^N.\nabla_x\phi(x)\psi(r)  +(\nabla \sigma_k^Nr).\nabla_r \psi(r) \phi(x)\right) dxdr\right]^2ds
  \\&=
  \E\int_0^t\sum_{k\in K}\left[\int_{\mathbb{R}^2}\int_{\mathbb{T}^2}(1_A\widetilde{f}^N(x,r,s)\nabla_x\phi(x)\psi(r))\cdot\sigma_k^N  +\left(1_A\widetilde{f}^N(x,r,s)\nabla_r \psi(r) \phi(x)\right)\cdot(\nabla \sigma_k^Nr) dxdr\right]^2ds
  \\&\leq
  2\E\int_0^t\sum_{k\in K}\left[\int_{\mathbb{R}^2}\int_{\mathbb{T}^2}(1_A\widetilde{f}^N(x,r,s)\nabla_x\phi(x)\psi(r))\cdot\sigma_k^N dxdr\right]^2ds\\&\quad +2\E\int_0^t\sum_{k\in K}\left[\int_{\mathbb{R}^2}\int_{\mathbb{T}^2}\left(1_A\widetilde{f}^N(x,r,s)\nabla_r \psi(r) \phi(x)\right)\cdot(\nabla \sigma_k^Nr) dxdr\right]^2ds:=2(I_1^N+I_2^N).
    	\end{align*}
   By   using   the definition  of  $\sigma_k^N$,   we  get
   \begin{align*}
       I_1^N&=\E\int_0^t\sum_{k\in K}\left[\int_{\mathbb{R}^2}\int_{\mathbb{T}^2}(1_A\widetilde{f}^N(x,r,s)\nabla_x\phi(x)\psi(r))\cdot\sigma_k^N dxdr\right]^2ds\\&=\E\int_0^t\sum_{k\in K_+}(\theta_{\vert    k\vert}^N)^2\left[\int_{\mathbb{R}^2}\int_{\mathbb{T}^2}(1_A\widetilde{f}^N(x,r,s)\nabla_x\phi(x)\psi(r))\cdot  \dfrac{k^\perp}{\vert   k\vert}\cos{k\cdot  x}dxdr\right]^2ds\\&+\E\int_0^t\sum_{k\in K_-}(\theta_{\vert    k\vert}^N)^2\left[\int_{\mathbb{R}^2}\int_{\mathbb{T}^2}(1_A\widetilde{f}^N(x,r,s)\nabla_x\phi(x)\psi(r))\cdot  \dfrac{k^\perp}{\vert   k\vert}\sin{k\cdot  x}dxdr\right]^2ds.
   \end{align*}
 By using   that    $(\dfrac{k^\perp}{\vert   k\vert}\cos{k\cdot  x},\dfrac{k^\perp}{\vert   k\vert}\sin{k\cdot  x})_{k\in  K}$  is an
(incomplete) orthonormal system in  $L^2(\T^2;\mathbb{R}^2)$,    we  obtain by  using   \eqref{coefficient}     
\begin{align*}
      \vert I_1^N\vert&\leq     \sup_{k\in  K}(\theta_{\vert    k\vert}^N)^2\E\int_0^t\int_{\mathbb{R}^2}\int_{\mathbb{T}^2}(\widetilde{f}^N(x,r,s)\nabla_x\phi(x)\psi(r))^2dxdrds\\
      &\leq    \Vert \nabla_x\phi\psi\Vert_\infty^2\sup_{k\in  K}(\theta_{\vert    k\vert}^N)^2  \E\int_0^T\int_{\mathbb{R}^2}\int_{\mathbb{T}^2}\widetilde{f}^N(x,r,s)^2dxdrds\\&\leq  \mathbf{K}^2 \Vert \nabla_x\phi\psi\Vert_\infty^2\sup_{k\in  K}(\theta_{\vert    k\vert}^N)^2\to   0   \text{  as  }   N\to    +\infty.
   \end{align*}
   Concerning   $I_2^N$,    we  have
   \begin{align*}
      I_2^N&= \E\int_0^t\sum_{k\in K}\left[\int_{\mathbb{R}^2}\int_{\mathbb{T}^2}\left(1_A\widetilde{f}^N(x,r,s)\nabla_r \psi(r) \phi(x)\right)\cdot(\nabla \sigma_k^Nr) dxdr\right]^2ds\\&=\E\int_0^t\sum_{k\in K_+}(\theta_{\vert    k\vert}^N)^2\vert   k\vert^2\left[\int_{\mathbb{R}^2}\int_{\mathbb{T}^2}\dfrac{k}{\vert   k\vert}\cdot   r\left(1_A\widetilde{f}^N(x,r,s)\nabla_r \psi(r) \phi(x)\right)\cdot  \dfrac{k^\perp}{\vert   k\vert}\sin{k\cdot  x}dxdr\right]^2ds\\&+\E\int_0^t\sum_{k\in K_-}(\theta_{\vert    k\vert}^N)^2\vert   k\vert^2\left[\int_{\mathbb{R}^2}\int_{\mathbb{T}^2}\dfrac{k}{\vert   k\vert}\cdot   r\left(1_A\widetilde{f}^N(x,r,s)\nabla_r \psi(r) \phi(x)\right)\cdot  \dfrac{k^\perp}{\vert   k\vert}\cos{k\cdot  x}dxdr\right]^2ds.
   \end{align*}
   Thus,    by  using   the same    argument    as  above   we  obtain
   \begin{align*}
      \vert I_1^N\vert&\leq     \sup_{k\in  K}[(\theta_{\vert    k\vert}^N)^2\vert   k\vert^2]\E\int_0^T\int_{\mathbb{R}^2}\int_{\mathbb{T}^2}   \vert    r\vert^2\left(\widetilde{f}^N(x,r,s)\nabla_r \psi(r) \phi(x)\right)^2dxdrds\\
      &\leq \sup_{k\in  K}[(\theta_{\vert    k\vert}^N)^2\vert   k\vert^2]\Vert \nabla_r \psi \phi\Vert_\infty^2\Vert  \widetilde{f}^N\Vert_{L^2(\Omega\times[0,T];H)}^2\to   0   \text{  as  }   N\to    +\infty.
\end{align*}
\end{proof}

\begin{lemma}\label{lem-cv-cont}   For any $t\in]0,T[,$ the    following    convergence holds   (up to  a   subsequence)
    \begin{align*}
   \lim_N     \int_A\int_{\mathbb{T}^2}\int_{\mathbb{R}^2}\widetilde{f}^N(x,r,t)\phi(x)\psi(r) drdxdP=\int_A\int_{\mathbb{T}^2}\int_{\mathbb{R}^2}\overline{f}(x,r,t)\phi(x)\psi(r) drdxdP.
    \end{align*}
\end{lemma}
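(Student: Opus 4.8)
The object to control is the real random variable
\[
Y_N(\omega):=\int_{\T^2}\int_{\R^2}\widetilde{f}^N(x,r,t)\,\phi(x)\psi(r)\,dr\,dx,
\]
with limit candidate $Y(\omega):=\int_{\T^2}\int_{\R^2}\overline{f}(x,r,t)\,\phi\psi\,dr\,dx$; the claim is $\int_A Y_N\,dP\to\int_A Y\,dP$. The plan is to deduce this from a fixed-time $P$-a.s.\ convergence $Y_N\to Y$ together with a uniform integrability estimate, and then invoke Vitali's convergence theorem (a.s.\ convergence plus uniform integrability gives $L^1(\Omega)$-convergence, hence convergence of the integrals over $A$).

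\textbf{Uniform integrability.} Writing $Y_N=(\widetilde{f}^N(t),\frac{\phi\psi}{1+|r|^2})_H$ and using Cauchy--Schwarz in $H$, one gets $|Y_N|\le C_{\phi,\psi}\Vert\widetilde{f}^N(t)\Vert_H\le C_{\phi,\psi}\Vert\widetilde{f}^N\Vert_{L^\infty(0,T;H)}$, where $C_{\phi,\psi}^2=\int_{\T^2}\int_{\R^2}\frac{(\phi\psi)^2}{1+|r|^2}\,dr\,dx<\infty$ since $\psi$ has compact support. By \autoref{Prop-boun-}, transferred to the new probability space through \autoref{lem-cv-1}, we have $\E\Vert\widetilde{f}^N\Vert^p_{L^\infty(0,T;H)}\le\mathbf{K}^p$ for every $p\ge 2$. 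Hence $\sup_N\E|Y_N|^p<\infty$ for some $p>1$, so the family $\{Y_N\mathbb{1}_A\}_N$ is uniformly integrable.

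\textbf{Fixed-time a.s.\ convergence.} From the Skorokhod convergence \eqref{cv-fN-time}, $\widetilde{f}^N\to\overline{f}$ in $C([0,T];U^\prime)$ $P$-a.s., so in particular $\widetilde{f}^N(t)\to\overline{f}(t)$ strongly in $U^\prime$ $P$-a.s. Pairing against the smooth, $r$-compactly supported test object $\phi\otimes\psi$ and using the Gelfand identification \eqref{Lions-Gelfand-triple} (the $U^\prime$--$U$ pairing coincides with the $L^2$-integral for $L^2$-functions), the pairing equals $Y_N$, and we obtain $Y_N\to Y$ $P$-a.s.; the right-hand side is well defined because $\overline{f}\in C_w([0,T];H)$, so $\overline{f}(t)\in H$ a.s. Combining with the previous step, Vitali's theorem yields the assertion.

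\textbf{Main obstacle.} The delicate point is precisely this fixed-time pairing: the only strong-convergence information sits in $U^\prime$, whereas $\phi\otimes\psi$ is a priori known only to lie in $V\supset U$, and $\overline{f}(t)$ belongs to $H$ merely through weak continuity in time; one must therefore bridge $U^\prime$-convergence with an $H$-valued integral. The cleanest rigorous route, avoiding the assumption $\phi\otimes\psi\in U$, is to use the uniform $L^2(\Omega;H)$-bound on $\widetilde{f}^N(t)$ to extract a weak-$L^2(\Omega;H)$ limit $Z$; testing against the deterministic element $\mathbb{1}_A\,\frac{\phi\psi}{1+|r|^2}\in L^2(\Omega;H)$ gives $\int_A Y_N\,dP\to\int_A\int Z\,\phi\psi$, while the a.s.\ $U^\prime$-convergence tested against $v\in U$ (with the density of $U$ in $L^2(\T^2\times\R^2)$ and the arbitrariness of $A\in\mathcal{F}$) forces $Z=\overline{f}(t)$. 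Identifying this fixed-time weak limit is the heart of the matter; once $Z=\overline{f}(t)$ is established, the convergence of $\int_A Y_N\,dP$ follows along the already fixed subsequence.
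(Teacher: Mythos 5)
Your proposal is correct and follows essentially the same route as the paper: the paper's proof also combines the $P$-a.s.\ convergence $\widetilde{f}^N\to\overline{f}$ in $C([0,T];U^\prime)$ from \eqref{cv-fN-time} with the uniform moment bounds of \autoref{Prop-boun-} and concludes by Vitali's convergence theorem. Your additional discussion of how to pair the $U^\prime$-limit at fixed $t$ with $\phi\otimes\psi$ (which is only guaranteed to lie in $V$, not $U$) via the uniform $H$-bound and a weak-limit identification is a legitimate refinement of a point the paper leaves implicit, not a different argument.
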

\begin{proof}
    From    \eqref{cv-fN-time},   
    $\widetilde{f}^N\rightarrow \overline{f} \quad \text{in } C([0,T];U^\prime)$ $P$-a.s.
    Since   $(\widetilde{f}^N)_N$    is  bounded in  $L^2_{w-*}(\Omega;L^\infty([0,T];H))$,  Vitali's convergence theorem ensures the convergence of  $\widetilde{f}^N$    to  $\overline{f}$ in  $L^1(\Omega;C([0,T];U^\prime))$.
\end{proof}
\subsubsection{Proof    of \autoref{main-thm-2}}\label{section-scaling-PF}

   Let  $\phi  \in C^\infty(\mathbb{T}^2)$ and  $\psi  \in C^\infty_c(\mathbb{R}^2)$,	
Let $t\in   ]0,T]$  and $A\in \mathcal{F}$.   From	\autoref{Def-sol},  point   (3)  and  by multiplying   by $I_A$ and integrating over $\Omega\times[0,t]$,  we derive
 \begin{align*}&\int_A\int_{\mathbb{T}^2}\int_{\mathbb{R}^2}\widetilde{f}^N(t)\phi\psi drdxdP-\int_A\int_{\mathbb{T}^2}\int_{\mathbb{R}^2}f_0\phi\psi drdxdP\\&-\int_A\int_0^t\sum_{k\in K}\int_{\mathbb{T}^2}\int_{\mathbb{R}^2}\widetilde{f}^N(s)\left(\sigma_k^N.\nabla_x\phi\psi  +(\nabla \sigma_k^Nr).\nabla_r \psi \phi\right) drdxdW^{N,k}(s)dP\\&= \int_A\int_0^t\int_{\mathbb{T}^2}\int_{\mathbb{R}^2}\widetilde{f}^N(s)\left(u_L(s)\cdot \nabla_x \phi \psi+ (\nabla u_L(s)r-\dfrac{1}{\beta}r) \cdot\nabla_r\psi \phi \right) dr dxdsdP\\	&-	\sigma^2\int_A\int_0^t\int_{\mathbb{T}^2}\int_{\mathbb{R}^2}\nabla_r\widetilde{f}^N(s) \cdot\nabla_r\psi \phi  dr dxdsdP-\alpha_N\int_A\int_0^t\int_{\mathbb{T}^2}\int_{\mathbb{R}^2}\nabla_x\widetilde{f}^N(s) \cdot\nabla_x\phi\psi dr dxdsdP\\
&- \dfrac{1}{2}\int_A\int_0^t\int_{\mathbb{T}^2}\int_{\mathbb{R}^2}(\sum_{k\in K}\left((\nabla \sigma_k^Nr) \otimes (\nabla \sigma_k^Nr)\right) \nabla_r\widetilde{f}^N(s))\cdot\nabla_r\psi \phi dr dxdsdP\\
&:=J_1^N+J_2^N+J_3^N+J_4^N.\end{align*}
We  pass    to  the limit  as   $N\to+\infty$ in  the RHS of  the last    equation.   By using  \autoref{lem-cv-1}, one has
\begin{align*}&\lim_N\int_A\int_0^T\int_{\mathbb{T}^2}\int_{\mathbb{R}^2}\widetilde{f}^N(s)\left(u_L(s)\cdot \nabla_x \phi \psi+ (\nabla u_L(s)r-\dfrac{1}{\beta}r) \cdot\nabla_r\psi \phi \right) dr dxdsdP\\&=\int_A\int_0^T\int_{\mathbb{T}^2}\int_{\mathbb{R}^2}\overline{f}(s)\left(u_L(s)\cdot \nabla_x \phi \psi+ (\nabla u_L(s)r-\dfrac{1}{\beta}r) \cdot\nabla_r\psi \phi \right) dr dxdsdP.
\end{align*}
Additionally,   we  have
\begin{align*}&\lim_N	\int_A\hspace{-0.08cm}\int_0^T\hspace{-0.08cm}\int_{\mathbb{T}^2}\hspace{-0.08cm}\int_{\mathbb{R}^2}\nabla_r\widetilde{f}^N(s) \cdot\nabla_r\psi \phi  dr dxdsdP= \int_A\hspace{-0.08cm}\int_0^T\hspace{-0.08cm}\int_{\mathbb{T}^2}\hspace{-0.08cm}\int_{\mathbb{R}^2}\nabla_r\overline{f}(s) \cdot\nabla_r\psi \phi  dr dxdsdP.
\end{align*}
 From     \autoref{lem-cv-2}  and    \autoref{lem-cv-3}, we  get
 \begin{align*}
     \lim_{N}  ( J_3^N+J_4^N)=-\dfrac{1}{2}\int_A\int_0^T\int_{\mathbb{T}^2}\int_{\mathbb{R}^2}A(r) \nabla_r\overline{f}(s))\cdot\nabla_r\psi \phi dr dxdsdP.
 \end{align*}
Finally,   we use \autoref{lem-stoch-vanishes}   and   \autoref{lem-cv-cont}      to  pass    to  the limit   in  LHS to complete    the proof.\\

\subsection{Uniqueness of the limit problem \eqref{equ-final}} 
Let us conclude this section by showing that the limit equation    \eqref{equ-final} has at most one solution.
\begin{lemma}\label{lemma-uniqunes-limit}
   The solution    $\overline{f}$  to  \eqref{equ-final}   is  unique.
\end{lemma}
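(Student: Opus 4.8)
The plan is to argue exactly as in the proof of \autoref{lem-uniq-Vn}, exploiting that \eqref{equ-final} is linear and deterministic. By linearity it suffices to show that any solution $\overline f$ with $f_0\equiv 0$ vanishes identically, so let $\overline f$ denote the difference of two solutions. The difficulty is again the purely hyperbolic dependence on $x$: since $\overline f$ only belongs to $L^\infty(0,T;H)$ with $\nabla_r\overline f\in L^2(0,T;H)$ and has no $x$-regularity, one cannot test the weak form \eqref{Limit-FP-eqn} directly against $\overline f$. I would therefore regularize in the $x$ variable only, setting $[\,\cdot\,]_\delta=\rho_\delta*\,\cdot\,$ with $\rho_\delta(X)=\rho_\delta(x)$, insert $\varphi_\delta=\rho_\delta*\varphi$ as a test function, use the self-adjointness of the convolution, and then take $\varphi=[\overline f]_\delta$ in the Gelfand triple $X\hookrightarrow L^2(\T^2\times\R^2)\hookrightarrow X'$ with $X=\{\varphi\in H:\nabla_r\varphi\in H\}$, exactly as was done to obtain \eqref{Vn-regularized-uniq}.

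This produces an energy identity of the form
\[
\tfrac12\|[\overline f(t)]_\delta\|^2+\sigma^2\int_0^t\|\nabla_r[\overline f(s)]_\delta\|^2\,ds+\tfrac12\int_0^t\!\!\int A(r)\nabla_r[\overline f]_\delta\cdot\nabla_r[\overline f]_\delta \,=\,\tfrac1\beta\int_0^t\|[\overline f(s)]_\delta\|^2\,ds+\mathcal R_\delta,
\]
where $\mathcal R_\delta$ collects the commutator contributions coming from the two $x$-dependent first-order terms. The crucial simplification with respect to \autoref{lem-uniq-Vn} is that both diffusion terms are harmless: since $\rho_\delta$ acts only in $x$ while $A(r)$, $\sigma^2\Delta_r$ and the damping $\tfrac1\beta\Div_r(r\,\cdot)$ involve only $r$, the convolution commutes with them exactly, so $[\Div_r(A(r)\nabla_r\overline f)]_\delta=\Div_r(A(r)\nabla_r[\overline f]_\delta)$ and, because $A(r)$ is symmetric non-negative (see \autoref{lem-matrix-r}), integration by parts gives the non-positive term $-\tfrac12\int A(r)\nabla_r[\overline f]_\delta\cdot\nabla_r[\overline f]_\delta\le 0$, which I simply discard; its integrability is guaranteed by $A(r)\lesssim|r|^2\le 1+|r|^2$ together with $\nabla_r\overline f\in H$. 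Likewise the $\sigma^2\Delta_r$ term yields $-\sigma^2\|\nabla_r[\overline f]_\delta\|^2$ and the damping the controllable $\tfrac1\beta\|[\overline f]_\delta\|^2$.

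It remains to show $\mathcal R_\delta\to 0$, and this is where the real work lies, but it is identical to the estimates already established for the mean equation. The transport term $u_L\cdot\nabla_x\overline f$ with $\Div_x u_L=0$ is treated by the DiPerna--Lions commutator $r_\delta=\rho_\delta*(u_L\cdot\nabla_x\overline f)-u_L\cdot\nabla_x[\overline f]_\delta$, for which \eqref{lem-17-eqn1-2-*}--\eqref{lem-17-eqn2-2-*} give the uniform bound $\|r_\delta\|_{L^2}\le C\|\nabla u_L\|_\infty\|\overline f\|_{L^2}$ and the convergence $r_\delta\to 0$ in $L^2$; the stretching term $\Div_r((\nabla u_L r)\overline f)$, whose coefficient depends on $x$, is handled by the same mean-value-theorem argument used there, producing an $O(\delta)$ bound thanks to $u_L\in C([0,T];C^2)$ and $\int|r|^2|\overline f|^2\le\|\overline f\|_H^2$. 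Passing to the limit $\delta\to 0$ then yields, as in \eqref{limit-delta-inequality},
\[
\tfrac12\|\overline f(t)\|^2+\sigma^2\int_0^t\|\nabla_r\overline f(s)\|^2\,ds\le\tfrac1\beta\int_0^t\|\overline f(s)\|^2\,ds,
\]
and Gr\"onwall's lemma forces $\overline f\equiv 0$ in $L^\infty(0,T;L^2(\T^2\times\R^2))$ (and $\nabla_r\overline f\equiv 0$ in $L^2$), which proves uniqueness. The main obstacle is thus purely the justification of the $x$-commutators, but these are exactly the computations already carried out for the mean equation, now without the extra $h_n,y_n$ terms and without the $\alpha_N\Delta_x$ correction.
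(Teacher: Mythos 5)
Your proposal is correct and follows essentially the same route as the paper: regularize in $x$ only, use self-adjointness of the mollifier and the Gelfand triple $X=\{\varphi\in H:\nabla_r\varphi\in H\}$ to test against $[\overline f]_\delta$, control the $x$-commutators exactly as in Steps 1--2 of the proof of \autoref{lem-uniq-Vn}, and conclude by Gr\"onwall. The only cosmetic difference is that you discard the $A(r)$-term by non-negativity of the quadratic form, whereas the paper computes it explicitly and retains the coercive contribution $\tfrac{k_T}{2}\int_0^t\Vert\,\vert r\vert\nabla_r\overline f\Vert^2\,ds$ in the final inequality; this is immaterial for uniqueness.
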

\begin{proof}
Let $\overline{f}_1$   and $\overline{f}_2$ be two solutions   to  \eqref{equ-final} and denote by  $\overline{f}$ be their difference.   Then
   $P$-a.s for any $t\in ]0, T]$   and $\phi\in    Y$, we  have
		\begin{align}&(    \overline{f}(t),\phi)-\int_0^t\langle \overline{f}(s),u_L(s)\cdot \nabla_x \phi+ (\nabla u_L(s)r-\dfrac{1}{\beta}r) \cdot\nabla_r\phi\rangle   ds\label{eqn.form-uniq}\\	=&-	\int_0^t\sigma^2\langle \nabla_r\overline{f}(s), \nabla_r\phi\rangle   ds- \dfrac{1}{2}\int_0^t\langle A(r) \nabla_rf^N(s)),\nabla_r\phi\rangle    ds.\notag\end{align}
  Since   the above   equation    is  in  weak    form,   we  need    first   to  consider    an  appropriate  regularization  to   get an  equation    for $\Vert  \overline{f}(t)  \Vert^2$   for any $t\in[0,T].$ We use analogous   argument    to  \textit{"Step   $1$"}  in  the proof   of  \autoref{lem-uniq-Vn}     to  obtain
   \begin{align}&\dfrac{1}{2} \Vert [\overline{f}(t)]_{\delta}\Vert^2+\int_0^t\langle [u_L(s)\cdot \nabla_x\overline{f}(s)]_{\delta}+[\Div_r(\nabla u_L(s)r-\dfrac{1}{\beta}r)\overline{f}(s)]_{\delta}, [\overline{f}(s)]_{\delta}\rangle   ds\label{uniq-reg-delta--}\\	=&\int_0^t\langle \sigma^2[\Delta_r\overline{f}(s)]_{\delta}, [\overline{f}(s)]_{\delta}\rangle   ds+ \dfrac{1}{2}\int_0^t\langle [\Div_rA(r) \nabla_r\overline{f}(s))]_{\delta},[\overline{f}(s)]_{\delta}\rangle    ds\notag.\end{align}
    Notice    that    $[\Div_rA(r) \nabla_r\overline{f}(s))]_{\delta},[\overline{f}(s)]_{\delta}\rangle    =\langle \Div_rA(r) \nabla_r[\overline{f}(s))]_{\delta},[\overline{f}(s)]_{\delta}\rangle    $ hence
\begin{align*}
    \dfrac{1}{2}\int_0^t\langle [\Div_rA(r) \nabla_r\overline{f}(s))]_{\delta},[\overline{f}(s)]_{\delta}\rangle    ds&=-
\dfrac{3k_T}{2}\int_0^t\Vert \vert    r\vert\nabla_r  [\overline{f}(s)]_\delta\Vert^2ds+k_T\int_0^t  \Vert  r\cdot\nabla_r [\overline{f}(s)]_\delta\Vert^2ds\\
&\leq   -\dfrac{k_T}{2}\int_0^t\Vert [\vert    r\vert\nabla_r  \overline{f}(s)]_\delta\Vert^2ds\to  -\dfrac{k_T}{2}\int_0^t\Vert \vert    r\vert\nabla_r  \overline{f}(s)\Vert^2ds  \text{  as  }   \delta  \to 0.
\end{align*}
Next,   arguments similar to that used in \textit{"Step   $2$"}  of  the proof   of  \autoref{lem-uniq-Vn}   allow  to  pass
 to  the limit   as  $\delta\to0$    in  \eqref{uniq-reg-delta--} and   we  get
 \begin{align}\label{limit-delta-inequality--}&\dfrac{1}{2} \Vert \overline{f}(t)\Vert^2+\sigma^2\int_0^t\Vert \nabla_r\overline{f}(s)\Vert^2   ds+\dfrac{k_T}{2}\int_0^t\Vert \vert    r\vert\nabla_r  \overline{f}(s)\Vert^2ds	\leq  \dfrac{1}{\beta}\int_0^t\Vert  \overline{f}(s)\Vert^2ds.    \end{align}
 The    last    inequality  \eqref{limit-delta-inequality--}  and Grönwall    lemma   completes    the proof of    \autoref{lemma-uniqunes-limit}.
\end{proof}  
\begin{remark}
Another way to  prove   that    uniqueness  to \eqref{equ-final} holds  is  to  notice  that 
\eqref{FP_SDE-limit}   
has \eqref{equ-final} as FP equation associated. We have uniqueness in law of weak solutions of the SDE \eqref{FP_SDE-limit} due to the properties of $\Sigma(r)$. The latter ensures uniqueness of solutions of \eqref{equ-final} due to \cite[Thm. 2.5]{trevisan2016well}.

\end{remark}

\subsection{About the 3D case}\label{remark-3D-case}
 It is worth drawing the reader's attention to the fact that with cosmetic changes to what was made above, it is possible to prove a similar result in a three-dimensional setting.  However, we preferred to present the result in 2D because we could present the stochastic turbulent velocity explicitly, see \autoref{assumption_noise-2D}. The main difference between 2D and 3D will be in the form of the stochastic turbulent velocity. In 3D, we can formulate the result using Fourier decomposition.  Let us give an example of noise where we get a similar result in 3D.\\

We  introduce the partition $\mathbb{Z}^3_0=\Gamma_{3,+}\cup \Gamma_{3,-}$\footnote{$\mathbb{Z}^3_0=\mathbb{Z}^3-\{(0,0,0)\}$} such that $\Gamma_{3,+}=-\Gamma_{3,-} $
and  we consider a family  of  real valued independent Brownian motions $(B_t^{k,j})_t^{k\in    \mathbb{Z}^3_0}, j\in \{1,2\}$   defined  on the complete   filtered probability space    $(\Omega,\mathcal{F},(\mathcal{F}_t)_t,P)$, that is 
$ \mathbb{E}(B_t^{k,j}B_s^{l,m})= \min(t,s)\delta_{k,l}\delta_{j,m}.$
Then,   we introduce a sequence of complex-valued Brownian motions adapted to $(\mathcal{F}_t)_t$  defined as follows
\begin{align*}
    W^{k,j}_t&=\begin{cases}
        B^{k,j}_t+iB^{-k,j}_t & \textit{if } k\in \Gamma_{3,+} \\
   B^{-k,j}_t-iB^{k,j}_t & \textit{if } k\in \Gamma_{3,-}.
    \end{cases}
    \end{align*}
Note that
$W^{-k,j}_t=\overline{W^{k,j}_t}$\footnote{ For a complex number $z$, $\overline{z}$ denotes its  complex conjugate.} and satisfies
\begin{align*}
    \E(W^{k,j}_1,\overline{W^{l,m}_1})&=\begin{cases}
        2 & \textit{if } k=l, \ m=j\\
        0 &  \textit{otherwise} ;
    \end{cases}
    \quad \left[W^{k,j}_\cdot, W^{l,m}_\cdot\right]_t=\begin{cases}
        2t & \textit{if } k=-l, \ m=j\\
        0 &  \textit{otherwise }.    
        \end{cases}
\end{align*}
  Let $N\in \mathbb{N}^*$,    define
  $
   \theta^N_{k,j} =\dfrac{a}{\lvert k\rvert^{5/2}}\mathbf{1}_{\{N\leq \lvert k\rvert\leq 2N\} },\  
$ for   a positive constant $a$.
Then, for each $ k\in \mathbb{Z}^{3}_0,\ j\in\{1,2\}$ we denote by $\sigma_{k,j}^N(x)=\theta_{k,j}^N  a_{k,j}e^{ik\cdot x}$, where $\{\frac{k}{\lvert k\rvert}, a_{k,1}, a_{k,2}\}$ is an orthonormal system of $\R^3$ for $k\in \Gamma_{3,+}$ and $a_{k,j}=a_{-k,j}$ if $k\in \Gamma_{3,-}$. Set
\begin{align}\label{noise-3D}
  \mathbf{W}^N(t,x)  =\sum_{{\substack{ k\in \mathbb{Z}^3_0, j\in \{1,2 \} }}}\theta_{k,j}^N  a_{k,j}e^{ik\cdot x}W^{k,j}(t).
\end{align}
  Thanks to the assumptions on  $\theta_{k,j}^N,  a_{k,j}$ and $W^{k,j}$, we can prove that  $\mathbf{W}^N$ is a real, mean-zero and divergence free 
random distribution.  Moreover, $\mathbf{W}^N(t,\cdot)\in L^2(\Omega,L^2(\mathbb{T}^3,\mathbb{R}^3)),$  see \cite[Sect. 2.2.]{Galeati}. On the other hand,  it is not diffucult to check that  the covariance operator associated with $\mathbf{W}^N$  is space-homogeneous  and has a mirror symmetry property. Now, by considering
$u^N=u_L+\circ \partial_t \mathbf{W}^N$ instead of   \eqref{turb-velocity-2D-eq} and then replace in \eqref{Stra-FP} we  obtain a  stochastic FP  in 3D  similar to  \eqref{Ito-FP}. Then, by cosmetic changes to what has been done above and  noticing that 
 \begin{align*}
 \sum_{\substack{k\in \mathbb{Z}^3_0, j\in \{1,2 \}\\ N\leq \lvert k\rvert \leq 2N}}\left((\nabla \sigma_{k,j}^Nr) \otimes \overline{(\nabla \sigma_{k,j}^Nr)}\right) &=  \sum_{\substack{k\in \mathbb{Z}^3_0\\ N\leq \lvert k\rvert \leq 2N}}\frac{(r\cdot k)^2}{\lvert k\rvert^5}\left(I-\frac{k\otimes k}{\lvert k\rvert^2}\right)= \mathcal{M}(r)+O(\dfrac{1}{N})P(r),	
\end{align*}
where $\mathcal{M}(r)= \frac{8\pi \log 2}{15}\left(2\lvert r\rvert^2 I-r\otimes r\right).$
We can prove that the initial stochastic FP \eqref{Ito-FP} in 3D   converges to  a limit PDE as \eqref{Limit-FP-eqn} where the matrix $A(r)$ is replaced by $\mathcal{M}(r).$

\begin{acknowledgements}  The research of F.F. and Y.T. is
funded by the European Union (ERC, NoisyFluid, No. 101053472). Views and
opinions expressed are however those of the authors only and do not necessarily
reflect those of the European Union or the European Research Council. Neither
the European Union nor the granting authority can be held responsible for them.
\end{acknowledgements}


\bibliography{scaling-limit-polymers_arXiv}{}
\bibliographystyle{plain}

\end{document}